\let\@afterindenttrue\@afterindentfalse
\let\thm@indent\indent}{\let\thm@indent\noindent}%
{}{}
\newtheorem{thm}{Theorem}[section]
\newtheorem{lemma}[thm]{Lemma}
\newtheorem{myexamp}{Example}[section]
\newtheorem{definition}[myexamp]{Definition}
\newtheorem{remark}[thm]{Remark}
\newcommand{\qed}{\hfill$\square$}
\newcommand{\R}{\mathbb{R}}
\newcommand{\N}{\mathbb{N}}
\newcommand{\K}{\mathbb{K}}
\newcommand{\ideal}[1]{\ensuremath{\mathfrak{#1}}}
\begin{document}
	
\begin{center}
	\Large{\textbf{Normality of Vaserstein group}}
\end{center}
	
\begin{center}
	Ruddarraju Amrutha\footnote{ This article is part of the doctoral thesis of the first named author}, Pratyusha Chattopadhyay
\end{center}
	
%\begin{center} 	
	%	MSC: 11C20, 11E39, 11E57, 13C10, 14L35, 15A24, 15A54, 15A63
%\end{center}
	
\medskip

%***********************************************************************************************************************************************************************

\begin{center}
	Abstract
\end{center}	
	
A.A. Suslin proved a normality theorem for an elementary linear group and V.I. Kopeiko extended this result of Suslin for a symplectic group defined with respect to the standard skew-symmetric matrix of even size. We generalized the result of Kopeiko for a symplectic group defined with respect to any invertible skew-symmetric matrix of Pfaffian one. Vaserstein group is an extension of a symplectic group defined with respect to any invertible skew-symmetric matrix of Pfaffian one in the set up of projective modules. Here we prove a normality theorem for Vaserstein group.

\medskip

%***********************************************************************************************************************************************************************

\section{Introduction}
\label{section:1}

In \cite{Sus}, A.A. Suslin proved a normality theorem for the elementary linear group, which says that for a commutative ring $R$ with $1$, the elementary linear group $\mathrm{E}_n(R)$ is normal in the general linear group $\mathrm{GL}_n(R)$, for $n\geq 3$. This normality theorem is important as it was used in proving a $K_1$-analogue of Serre's conjecture. Serre's conjecture says that for a field $\K$, any finitely generated projective module over the polynomial ring $\K[x_1,\ldots,x_r]$ is free. The $K_1$-analogue of this result, proved by Suslin in \cite{Sus}, says that every element in the special linear group $\mathrm{SL}_n(\K[x_1,\ldots,x_r])$ can be written as a product of elementary matrices over $\K[x_1,\ldots,x_r]$, for $n\geq 3$. Suslin also proved a stronger relative version of the above mentioned normality theorem with respect to an ideal.
	
A similar normality theorem was proved by V.I. Kopeiko in \cite{Kop} for the elementary symplectic group. In the symplectic case, the relative version of the normality theorem says that for a commutative ring $R$ with $R=2R$ and an ideal $I$ of $R$, the relative elementary symplectic group $\mathrm{ESp}_{2n}(R,I)$ is a normal subgroup of $\mathrm{Sp}_{2n}(R)$, for $n\geq 2$. Kopeiko used this result to prove that for a field $\K$, any element of $\mathrm{Sp}_{2n}(\K[x_1,\ldots,x_r])$ can be written as a product of elementary symplectic matrices over $\K[x_1,\ldots,x_r]$, for $n\geq 2$. In \cite{Kop-Sus}, Suslin and Kopeiko together proved a normality theorem for orthogonal groups which they used to prove that if $\K$ is a field such that char$(\K) \ne 2$, and $A=\K[x_1,\ldots,x_r]$, then any quadratic $A$-space is extended from $\K$.
	
In \cite{Norm}, authors generalized Kopeiko's normality theorem. The main result (Theorem 5.2 of \cite{Norm}) says that if $\varphi$ is an invertible skew-symmetric matrix of Pfaffian $1$ of size $2n$ over $R$ with $R=2R$, and $I$ is an ideal of $R$, then the relative elementary symplectic group $\mathrm{ESp}_\varphi(R,I)$ with respect to the matrix $\varphi$ is normal in $\mathrm{Sp}_\varphi(R)$, the symplectic group with respect to $\varphi$.   

In this paper we deal with a group which is considered as generalization of $\mathrm{Sp}_\varphi(R)$ in the set up of projective modules. In 1955 J.-P. Serre stated his famous conjecture, and about twenty years later, in the year of 1976, D. Quillen and A.A. Suslin proved this conjecture independently (see \cite{Q}, \cite{Su}). However, during this gap of twenty years many partial results on this conjecture were established which are known as ``classical" results on Serre's conjecture, and in the latter half H. Bass introduced the concept of transvections to obtain cancellation theorems involving projective modules which in turn was used to prove some of the ``classical" results (see \cite{B2}, \cite{Sw}). Thus transvection groups are important class of subgroups of the group of automorphisms of projective modules.

It is well-known that for a free module $M=R^n$ a linear transvection looks like $I_n + v w^t$, where $v, w \in R^n$, one of $v, w$ is unimodular, and their dot product $v^t w = 0$. Hence it follows that in the case of free modules linear transvection groups coincide with the elementary linear groups. Using similar kind of arguments it can be deduced that in the case of free modules symplectic transvection groups  coincide with the elementary symplectic groups and orthogonal transvection groups coincide with the elementary orthogonal groups (see \cite{Sus}, \cite{Kop}, \cite{Kop-Sus} for proofs in linear, symplectic, and orthogonal cases respectively). Bak, Basu, Rao generalized the normality theorems of Suslin and Kopeiko for the linear and symplectic transvection groups. 

In this paper we recall definitions of Vaserstein group in the absolute case and in the relative case (with respect to an ideal of a ring) as they are defined in Definition \ref{V(P) defn} and Definition \ref{V(P,IP) defn}. Note that Vaserstein groups can be considered as generalization of $\mathrm{ESp}_\varphi(R)$ in the set up of projective modules (see Lemma \ref{lemma:V=ESp}). Here we prove a normality result for Vaserstein group (see Lemma \ref{lemma:normality}). This result can considered as an extension of normality result for $\mathrm{ESp}_\varphi(R)$ in the case of projective modules.

%***********************************************************************************************************************************************************************

\section{Preliminaries}
\label{section:2}
	
Let $R$ denote a commutative ring with unity. Let $R^n$ denote the space of column vectors of length $n$ with entries in $R$. The ring of matrices of size $n\times n$ with entries in $R$ is denoted by $\mathrm{M}_{n}(R)$. The identity matrix of size $n\times n$ is denoted by $I_n$ and $e_{ij}$ denotes the $n\times n$ matrix which has $1$ in the $(i,j)$-th position and $0$ everywhere else. The collection of invertible $n\times n$ matrices with entries in $R$ is denoted by $\mathrm{GL}_{n}(R)$. For $\alpha\in\mathrm{M}_{m}(R)$ and $\beta\in\mathrm{M}_{n}(R)$, the matrix {\tiny{$\begin{pmatrix}\alpha&0\\0&\beta\end{pmatrix}$}}, which is an element of $\mathrm{M}_{m+n}(R)$, is denoted by $\alpha\perp\beta$. For a matrix $\alpha$ of size $m\times n$, its transpose is denoted by $\alpha^t$, a matrix of size $n\times m$. For an $R$-module $M$, we denote by $M^\ast$ the dual space of $M$, that is, the collection of all $R$-module homomorphisms $f:M\rightarrow R$. 
	
\begin{definition} % Elementary linear group
	\rm{The \textit{elementary linear group} $\mathrm{E}_{n}(R)$ is a subgroup of $\mathrm{GL}_n(R)$ generated by elements of the form $E_{ij}(a)=I_n+ae_{ij}$, for $a\in R$. For an ideal $I$ of $R$, the subgroup of $\mathrm{E}_{n}(R)$ generated by $E_{ij}(x)$, for $x\in I$ is denoted by $\mathrm{E}_{n}(I)$. The \textit{relative elementary group}, denoted by $\mathrm{E}_{n}(R,I)$, is the normal closure of $\mathrm{E}_{n}(I)$ in $\mathrm{E}_{n}(R)$. In other words, $\mathrm{E}_{n}(R,I)$ is generated by elements of the form $E_{kl}(a) E_{ij}(x) E_{kl}(-a)$, where $a \in R$ and $x \in I$.}
\end{definition} % Elementary linear group
	
\begin{definition} % Symplectic groups
	\rm{Let $\psi_n=\sum_{i=1}^{n}(e_{2i-1,2i}-e_{2i,2i-1})$ denote the standard skew-symmetric matrix. The \textit{symplectic group}, denoted by $\mathrm{Sp}_{2n}(R)$, is a subgroup of $\mathrm{GL}_{2n}(R)$ defined as 
		\begin{equation*}
			\mathrm{Sp}_{2n}(R)=\{\alpha\in\mathrm{GL}_{2n}(R)\;\big|\;\alpha^t\psi_n\alpha=\psi_n\}.
		\end{equation*}	
		Given an invertible skew-symmetric matrix $\varphi$ of size $2n$, 
	\begin{equation*}
		\mathrm{Sp}_\varphi(R)=\{\alpha\in\mathrm{GL}_{2n}(R)\;\big|\;\alpha^t\varphi\alpha=\varphi\}.
	\end{equation*}}	
\end{definition} % Symplectic groups	
	
\begin{definition} % Sp_phi(R,I)
	\rm{Let $I$ be an ideal of $R$. Then we have the canonical ring homomorphism $f:R\rightarrow R/I$. Using $f$, we can define a ring homomorphism $\tilde{f}:\mathrm{Sp}_\varphi(R)\rightarrow\mathrm{Sp}_{\varphi}(R/I)$ given by $\tilde{f}(a_{ij})=(f(a_{ij}))$. We denote the kernel of this map by $\mathrm{Sp}_{\varphi}(R,I)$.}
\end{definition} % Sp_phi(R,I)
	
\begin{definition} % Elementary symplectic group
	\rm{Let $\sigma$ be the permutation of $\{1,2,\ldots,2n\}$ given by $\sigma(2i-1)=2i$ and $\sigma(2i)=2i-1$. For $a\in R$ and $1\leq i\neq j\leq 2n$, define $se_{ij}(a)$ as
	\begin{equation*}
		se_{ij}(a)=\begin{cases} I_{2n}+ae_{ij},&\text{ if } i=\sigma(j)\\
					I_{2n}+ae_{ij}-(-1)^{i+j}ae_{\sigma(j)\sigma(i)},&\text{ if } i\neq\sigma(j)\\ \end{cases}.
	\end{equation*}
	Note that $se_{ij}(a)\in\mathrm{Sp}_{2n}(R)$. These matrices are called the \textit{elementary symplectic matrices}. The subgroup of $\mathrm{Sp}_{2n}(R)$ generated by the elementary symplectic matrices is called the \textit{elementary symplectic group}, and is denoted by $\mathrm{ESp}_{2n}(R)$. We denote by $\mathrm{ESp}_{2n}(I)$ a subgroup of $\mathrm{ESp}_{2n}(R)$ generated by elements of the form $se_{ij}(x)$, for $x\in I$. The \textit{relative elementary group}, denoted by $\mathrm{ESp}_{2n}(R,I)$, is the normal closure of $\mathrm{ESp}_{2n}(I)$ in $\mathrm{ESp}_{2n}(R)$. In other words, $\mathrm{ESp}_{2n}(R, I)$ is generated by elements of the form $se_{kl}(a) se_{ij}(x) se_{kl}(-a)$, where $a \in R$ and $x \in I$.}
\end{definition} % Elementary symplectic group
	
\begin{definition} % Elementary symplectic group wrt phi
	\rm{Let $\varphi$ be an invertible skew-symmetric matrix of size $2n$ of the form $\begin{pmatrix} 0 & -c^t\\ c & \nu \end{pmatrix}$, and $\varphi^{-1}$ be of the form $\begin{pmatrix} 0 & d^t\\ -d & \mu \end{pmatrix}$, where $c,d\in R^{2n-1}$ and $\nu,\mu\in \textrm{M}_{2n-1}(R)$.
		
	Given $v\in R^{2n-1}$, consider the matrices $\alpha$ and $\beta$ defined as 
	\begin{equation*}
		\begin{aligned}
		\alpha&:=\alpha_\varphi(v)&:=I_{2n-1}+dv^t\nu\\
		\beta&:=\beta_\varphi(v)&:=I_{2n-1}+\mu vc^t.
		\end{aligned}
	\end{equation*}
		
	L.N. Vaserstein constructed these matrices in Lemma 5.4, \cite{Vas}. Note that $\alpha$ and $\beta$ depend on $\varphi$ and $v$. Also, $\alpha,\beta\in\mathrm{E}_{2n-1}(R)$. This follows by Corollary 1.2 and Lemma 1.3 of \cite{Sus}. One can also see Lemma 9.11 of Chapter 1 of \cite{Lam} for a proof of this result. An interesting observation about these matrices is that $\mathrm{E}_{2n-1}(R)$ is generated by the set  $\{\alpha_\varphi(v),\beta_\varphi(v)\; :\; v\in R^{2n-1}\}$ (Theorem 5.1 of \cite{GSV}). Using these matrices, Vaserstein constructed the following matrices in \cite{Vas}:
	\begin{equation*}
		\begin{aligned}
		C_\varphi(v)&:=\begin{pmatrix} 1&0\\ v&\alpha\end{pmatrix}\\
		R_\varphi(v)&:=\begin{pmatrix} 1&v^t\\ 0&\beta\end{pmatrix}.
		\end{aligned}
	\end{equation*}
		
	Note that $C_\varphi(v)$ and $R_\varphi(v)$ belong to $\mathrm{Sp}_\varphi(R)$. The \textit{elementary symplectic group} $\mathrm{ESp}_{\varphi}(R)$ \textit{with respect to the invertible skew-symmetric matrix} $\varphi$ is a subgroup of $\mathrm{Sp}_\varphi(R)$ generated by $C_\varphi(v)$ and $R_\varphi(v)$, for $v\in R^{2n-1}$. We denote by $\mathrm{ESp}_{\varphi}(I)$ a subgroup of $\mathrm{ESp}_{\varphi}(R)$ generated as a group by the elements $C_\varphi(v)$ and $R_\varphi(v)$, for $v\in I^{2n-1}(\subseteq R^{2n-1})$. The \textit{relative elementary symplectic group} $\mathrm{ESp}_{\varphi}(R,I)$ is the normal closure of $\mathrm{ESp}_{\varphi}(I)$ in $\mathrm{ESp}_{\varphi}(R)$.}
\end{definition} % Elementary symplectic group wrt phi
	
\begin{definition} % Bilinear form
	\rm{Let $R$ be a ring and $M$ be an $R$-module. Let $B:M\times M\rightarrow R$ be a bilinear form on $M$. $B$ is said to be \textit{skew-symmetric} if $B(n,m)=-B(m,n)$ for all $m, n\in M$. $B$ is said to be \textit{alternating} if $B(m,m)=0$, for every $m\in M$. $B$ is said to be \textit{nondegenerate} if $M\cong M^\ast$ by $x\mapsto B(x,-)$.}
\end{definition} % Bilinear form

\begin{remark}
	Let $R$ be a ring and $M$ be an $R$-module. Let $B$ be a bilinear form on $R$. If $2$ is a unit in $R$, then $B$ is skew-symmetric if and only if $B$ is alternating.
\end{remark}
	
\begin{definition} % Projective module
	\label{projective defn}
	\rm{An $R$-module $P$ is called \textit{projective} if $\mathrm{Hom}_R(P,.)$ is an exact functor from $R$-modules to abelian groups. An $R$-module $P$ is projective if and only if $P$ is a direct summand of a free $R$-module.}
\end{definition} % Projective module

\begin{definition} % Alternating matrix
	\rm{A matrix $\varphi\in\mathrm{M}_n(R)$ is said to be \textit{alternating} if there exists $\nu\in\mathrm{M}_n(R)$ such that $\varphi=\nu-\nu^t$.}
\end{definition} % Alternating matrix	
	
\begin{remark}
	Let $P=\oplus_{i=1}^{2n} Re_i$ be a free $R$-module of even rank. The nondegenerate alternating bilinear form $\langle,\rangle$ on $P$ corresponds to an alternating matrix $\varphi$ with Pfaffian $1$ with respect to the basis $\{e_1,e_2,\ldots,e_{2n}\}$ of $P$ and we write $\langle p,q\rangle_\varphi=p^t\varphi q$.
\end{remark}
	
\begin{definition} % Symplectic module
	\rm{A \textit{symplectic $R$-module} is a pair $(P,\langle,\rangle)$, where $P$ is a finitely generated projective module of even rank and $\langle,\rangle:P\times P\rightarrow R$ is a nondegenerate alternating bilinear form.}
\end{definition} % Symplectic module
	
\begin{definition} % Isometry
	\rm{An \textit{isometry} $\varphi$ of a symplectic module $(P,\langle,\rangle)$ is an automorphism of $P$ that fixes the bilinear form $\langle,\rangle$, that is, $\langle\varphi(x),\varphi(y)\rangle=\langle x,y\rangle$ for all $x$, $y\in P$. The group of isometries of $(P,\langle,\rangle)$ is denoted by $\mathrm{Sp}(P,\langle,\rangle)$.} 
\end{definition} % Isometry

%***********************************************************************************************************************************************************************

\section{Results about elementary symplectic group}
\label{section:3}
	
In this section, we recall a few results related to the elementary symplectic group including the normality result for elementary symplectic group (\cite{Norm}, Theorem 5.2) that will be used in proving the main result (Lemma \ref{lemma:normality}).

\begin{lemma}(Corollary 1.11, \cite{Kop})
\label{lemma:3.1}
	Let $R$ be a ring and $I$ be an ideal of $R$. Let $n\geq 2$. Then, $\mathrm{ESp}_{2n}(R,I)$ is a normal subgroup of $\mathrm{Sp}_{2n}(R)$.
\end{lemma}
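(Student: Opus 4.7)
The plan is to reduce the claim to a commutator manipulation built on top of the absolute normality $\mathrm{ESp}_{2n}(R) \trianglelefteq \mathrm{Sp}_{2n}(R)$. Since by construction $\mathrm{ESp}_{2n}(R,I)$ is already the normal closure of $\mathrm{ESp}_{2n}(I)$ in $\mathrm{ESp}_{2n}(R)$, it is closed under conjugation by elements of $\mathrm{ESp}_{2n}(R)$; thus it is enough to show that for every $\gamma \in \mathrm{Sp}_{2n}(R)$ and every generator $se_{ij}(x)$ with $x \in I$, the conjugate $\gamma \cdot se_{ij}(x) \cdot \gamma^{-1}$ lies in $\mathrm{ESp}_{2n}(R,I)$.

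The first step is to settle the absolute case: $\mathrm{ESp}_{2n}(R) \trianglelefteq \mathrm{Sp}_{2n}(R)$ for $n\geq 2$. Given $\gamma \in \mathrm{Sp}_{2n}(R)$ and a generator $se_{ij}(a)$, the conjugate $\gamma \cdot se_{ij}(a) \cdot \gamma^{-1}$ has the form $I_{2n} + aN_{\gamma}$, where $N_\gamma$ is a matrix of rank at most $2$ built from a single column of $\gamma$ and a single column of $\gamma^{-1} = \psi_n^{-1}\gamma^t\psi_n$. Using this explicit description together with the symplectic identity $\gamma^t\psi_n\gamma = \psi_n$, one can factor $I_{2n} + aN_\gamma$ as a product of elementary symplectic matrices. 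The hypothesis $n\geq 2$ is used to make a spare index available for the required commutator manipulations.

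The second step upgrades the absolute statement to the relative one via a Steinberg-type commutator decomposition. For $n\geq 2$ and suitable indices $i,j,k$, one has an identity of the form
$$se_{ij}(x) \;=\; \bigl[\, se_{ik}(x),\ se_{kj}(1) \,\bigr],$$
the exact shape of which depends on whether the roots involved are long or short. Conjugating this identity by $\gamma$ gives
$$\gamma\cdot se_{ij}(x)\cdot \gamma^{-1} \;=\; \bigl[\, \gamma\cdot se_{ik}(x)\cdot \gamma^{-1},\ \gamma\cdot se_{kj}(1)\cdot \gamma^{-1}\,\bigr].$$
By the first step the factor $\gamma\cdot se_{kj}(1)\cdot \gamma^{-1}$ lies in $\mathrm{ESp}_{2n}(R)$, while the other factor is of the form $I_{2n} + x M$ and is a conjugate of an ``$I$-elementary'' matrix by a symplectic element. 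Expanding $[\xi,\eta] = \xi\eta\xi^{-1}\eta^{-1}$ and reorganizing, the result is expressed as a product of $\mathrm{ESp}_{2n}(R)$-conjugates of generators $se_{pq}(y)$ with $y\in I$, which by definition lies in $\mathrm{ESp}_{2n}(R,I)$.

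The main obstacle I expect is the case analysis inside both steps. The long-root generators ($i = \sigma(j)$) and the short-root generators ($i \neq \sigma(j)$) have different structural behaviour, so the explicit factorization in the absolute step and the Steinberg decomposition in the relative step have to be written out and verified in each case separately, keeping track of which intermediate index $k$ is admissible. Once these identities are in hand, the pieces assemble routinely to give the normality of $\mathrm{ESp}_{2n}(R,I)$ in $\mathrm{Sp}_{2n}(R)$.
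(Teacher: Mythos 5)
The paper does not actually prove this lemma: it is imported wholesale as Corollary 1.11 of \cite{Kop}, so the only meaningful comparison is with Kopeiko's original argument. Measured against that, your outline has the right overall shape (conjugate a single generator $se_{ij}(x)$, $x\in I$, and settle the absolute case first), but both of your steps conceal the real content. In Step 1, the whole difficulty of Kopeiko's theorem sits inside the sentence ``one can factor $I_{2n}+aN_\gamma$ as a product of elementary symplectic matrices'': for a long root this is the factorization of a symplectic transvection attached to a unimodular column of $\gamma$, for a short root ($i\neq\sigma(j)$) the matrix $N_\gamma$ is built from \emph{two} columns and is an Eichler-type rank-two transvection, and in both cases the factorization is a genuine chain of Suslin--Kopeiko lemmas, not a routine consequence of $\gamma^t\psi_n\gamma=\psi_n$.

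The genuine gap is Step 2. After conjugating $se_{ij}(x)=[se_{ik}(x),se_{kj}(1)]$ you get $[\xi,\eta]$ with $\eta=\gamma se_{kj}(1)\gamma^{-1}\in\mathrm{ESp}_{2n}(R)$ by Step 1, but $\xi=\gamma se_{ik}(x)\gamma^{-1}$ is itself an $\mathrm{Sp}_{2n}(R)$-conjugate of an $I$-elementary generator --- exactly the kind of element whose membership in $\mathrm{ESp}_{2n}(R,I)$ is at issue, so the reduction is circular as written. Expanding $[\xi,\eta]=(\xi\eta\xi^{-1})\eta^{-1}$ and invoking absolute normality only shows the commutator lies in $\mathrm{ESp}_{2n}(R)$; to conclude it lies in the \emph{relative} group you are implicitly using the commutator formula $[\mathrm{Sp}_{2n}(R,I),\mathrm{ESp}_{2n}(R)]\subseteq\mathrm{ESp}_{2n}(R,I)$, which is a theorem of essentially the same depth as the lemma itself, not a formal consequence of the absolute case over $R$. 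What ``reorganizing'' would have to produce is precisely the relative factorization lemma (the conjugate $I_{2n}+xM$ factors into $\mathrm{ESp}_{2n}(R)$-conjugates of generators with parameters in $I$), and that is the substance of Kopeiko's proof. If you want a derivation of the relative statement that really is formal once absolute normality is known over all rings, it goes through the excision ring $A=R\oplus I$ with multiplication $(r,i)(r',i')=(rr',ri'+r'i+ii')$: apply absolute normality over $A$ to $\hat\gamma\,se_{ij}((0,x))\,\hat\gamma^{-1}$, regroup the elementary factors (in the spirit of Lemma \ref{prod aibi}) using the retraction $(r,i)\mapsto r$ to see that the result lies in the relative elementary group for the ideal $0\oplus I$, and then push forward along $(r,i)\mapsto r+i$. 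Finally, note that the type $\mathrm{C}_n$ Steinberg relations produce long-root elements only with extra factors, sometimes carrying a coefficient $2$, and the lemma as stated has no $R=2R$ hypothesis, so the case analysis you defer is not mere bookkeeping.
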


\begin{lemma}
\label{lemma:3.2}
	Let $(R,\ideal{m})$ be a local ring and $\varphi$ be a skew-symmetric matrix of Pfaffian $1$ of size $2n$ over $R$. Then $\varphi=\epsilon^t\psi_n\epsilon$, for some $\epsilon\in\mathrm{E}_{2n}(R)$. 
\end{lemma}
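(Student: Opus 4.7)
The plan is to induct on $n$, reducing $\varphi$ step by step to the standard form $\psi_n$ via elementary congruence transformations $\varphi \mapsto \epsilon^t \varphi \epsilon$ with $\epsilon \in \mathrm{E}_{2n}(R)$. The base case $n = 1$ is immediate: any $2 \times 2$ alternating matrix over $R$ has the form
\[
\begin{pmatrix} 0 & a \\ -a & 0 \end{pmatrix},
\]
and Pfaffian $1$ forces $a = 1$, so $\varphi = \psi_1$ and $\epsilon = I_2$ works.

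For the inductive step, write $\varphi$ as in the preceding definition in the block form $\varphi = \begin{pmatrix} 0 & -c^t \\ c & \nu \end{pmatrix}$ with $c \in R^{2n-1}$, and write $\varphi^{-1} = \begin{pmatrix} 0 & d^t \\ -d & \mu \end{pmatrix}$ similarly. Comparing $\varphi \varphi^{-1} = I_{2n}$ block by block forces $c^t d = 1$, so $c$ is unimodular in $R^{2n-1}$. Locality of $R$ implies that some component $c_i$ is a unit, and standard elementary row operations then produce $\eta \in \mathrm{E}_{2n-1}(R)$ with $\eta c = e_1$. Setting $\epsilon_1 := 1 \perp \eta^t$, which lies in $\mathrm{E}_{2n}(R)$ because each generator $E_{ij}(a)$ of $\mathrm{E}_{2n-1}(R)$ block-embeds as $E_{i+1,j+1}(a)$, one computes
\[
\epsilon_1^t \varphi \epsilon_1 = \begin{pmatrix} 0 & -e_1^t \\ e_1 & \eta \nu \eta^t \end{pmatrix}.
\]
Expanding the alternating block as $\eta \nu \eta^t = \begin{pmatrix} 0 & -b^t \\ b & \nu'' \end{pmatrix}$, the transformed matrix becomes
\[
\varphi_1 = \begin{pmatrix} 0 & -1 & 0 \\ 1 & 0 & -b^t \\ 0 & b & \nu'' \end{pmatrix}.
\]

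Next, for each $j = 3, \ldots, 2n$ I would conjugate $\varphi_1$ by $E_{1j}(b_{j-2})$: this adds $b_{j-2}$ times column $1$ (which is $e_2$) to column $j$ and simultaneously adds $b_{j-2}$ times row $1$ (which is $-e_2^t$) to row $j$, killing both the $(2,j)$ and the $(j,2)$ entry without disturbing the upper-left $\psi_1$ block or the $\nu''$ block. The result is the block-diagonal matrix $\psi_1 \perp \nu''$. Since the Pfaffian is invariant under $\mathrm{E}_{2n}(R)$-congruence and multiplicative under $\perp$, the alternating matrix $\nu'' \in \mathrm{M}_{2n-2}(R)$ has Pfaffian $1$. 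The induction hypothesis supplies $\epsilon_2 \in \mathrm{E}_{2n-2}(R)$ with $\epsilon_2^t \psi_{n-1} \epsilon_2 = \nu''$, and conjugation by $I_2 \perp \epsilon_2^{-1} \in \mathrm{E}_{2n}(R)$ reduces $\psi_1 \perp \nu''$ to $\psi_1 \perp \psi_{n-1} = \psi_n$. Composing and inverting the accumulated product of elementary matrices yields the required $\epsilon$.

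The main obstacle is the first reduction step: one must verify that $c$ is unimodular (rather than merely that $\mathrm{Pf}(\varphi)$ is a unit) and, more importantly, that $c$ can be transported to $e_1$ by an \emph{elementary} matrix rather than some arbitrary element of $\mathrm{GL}_{2n-1}(R)$. Locality of $R$ is used crucially here: a unimodular vector over a local ring has a unit entry, which can be pivoted into the first slot by an elementary swap-move and then used to clear the rest via elementary column operations. The bookkeeping for the embedding $\mathrm{E}_{2n-1}(R) \hookrightarrow \mathrm{E}_{2n}(R)$ and the invariance of the Pfaffian under elementary congruence must also be recorded carefully.
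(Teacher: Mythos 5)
Your argument is correct. Note first that the paper offers no proof of this lemma: it is recalled as a classical fact (compare Remark \ref{phi over local ring}, which quotes the sharper statement, with the congruence matrix of the form $1\perp\epsilon_0$, from Lemma 5.2 of \cite{JPAA}), so there is no in-text argument to measure your route against; what you give is the standard induction and it goes through. The key points all check out: $c^td=1$ (or simply the unimodularity of the first row of the invertible matrix $\varphi$) makes $c$ unimodular, and over the local ring $(R,\ideal{m})$ some entry of $c$ is a unit, so for length $2n-1\geq 2$ one can indeed reach $e_1$ by genuinely elementary moves --- though phrase this as ``add a multiple of the unit coordinate to make the first coordinate $1$, then clear the rest'' rather than as a ``swap-move'', since a literal swap is not elementary (only a signed swap is). Your clearing step is also right: with $E=\prod_{j\ge 3}E_{1j}(b_{j-2})=I_{2n}+e_1w^t$, where $w^t=(0,0,b^t)$, one computes $E^t\varphi_1E=\varphi_1-we_2^t+e_2w^t=\psi_1\perp\nu''$, so neither the $\psi_1$ block nor the $\nu''$ block is disturbed, and $\mathrm{Pf}(\nu'')=1$ because congruence by elementary (determinant one) matrices preserves the Pfaffian and $\mathrm{Pf}$ is multiplicative on orthogonal sums. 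One remark on how this feeds into the paper: later uses (e.g.\ Lemma \ref{lemma:local basis independence}) need the refined form $\varphi=(1\perp\epsilon_0)^t\psi_n(1\perp\epsilon_0)$ with $\epsilon_0\in\mathrm{E}_{2n-1}(R)$, which your conclusion with $\epsilon\in\mathrm{E}_{2n}(R)$ yields only after combining it with the Rao--Swan observation, exactly as the paper does via Lemma \ref{lemma:3.4}.
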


We recollect an observation of Rao-Swan stated in the introduction of \cite{RaoSwan}.

\begin{lemma}
\label{lemma:3.3}
	Let $n \ge 2$ and $\epsilon \in \mathrm{E}_{2n}(R)$. Then there exists $\rho \in \mathrm{E}_{2n-1}(R)$ such that $(1 \perp \rho) \epsilon \in \mathrm{ESp}_{2n}(R)$.
\end{lemma}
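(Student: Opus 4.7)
The plan is to prove the equivalent inclusion $\mathrm{E}_{2n}(R) \subseteq (1 \perp \mathrm{E}_{2n-1}(R)) \cdot \mathrm{ESp}_{2n}(R)$, by inducting on the length of $\epsilon$ as a word in the elementary generators $E_{ij}(a)$. The engine of the argument is that Vaserstein's column and row matrices $C_{\psi_n}(v), R_{\psi_n}(v)$ lie in $\mathrm{ESp}_{2n}(R)$ and have the explicit block form
$$C_{\psi_n}(v)=\begin{pmatrix} 1 & 0 \\ v & \alpha_{\psi_n}(v) \end{pmatrix},\qquad R_{\psi_n}(v)=\begin{pmatrix} 1 & v^t \\ 0 & \beta_{\psi_n}(v) \end{pmatrix},$$
so that any first-row or first-column elementary operation can be completed to an element of $\mathrm{ESp}_{2n}(R)$ by left-multiplying with an appropriate $(1 \perp \alpha_{\psi_n}(v))$ or $(1 \perp \beta_{\psi_n}(v))$.

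For the base case I would handle a single generator $E = E_{ij}(a)$ in three subcases. If $i, j \geq 2$, then $E = 1 \perp E'_{i-1, j-1}(a)$, so $\rho = E'_{i-1, j-1}(-a)$ gives $(1 \perp \rho)E = I_{2n}$. If $j = 1$ and $i \neq 1$, I take $\rho = \alpha_{\psi_n}(a e_{i-1})$; the resulting product equals $C_{\psi_n}(a e_{i-1})$ provided the identity $\alpha_{\psi_n}(v)v = v$ holds, which follows from $v^t \nu v = 0$, since the bottom-right block $\nu$ of $\psi_n$ is alternating, and by definition $\alpha_{\psi_n}(v) v = v + (v^t \nu v)d$. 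If $i = 1$ and $j \neq 1$, then $\rho = \beta_{\psi_n}(a e_{j-1})$ directly yields $(1 \perp \rho)E = R_{\psi_n}(a e_{j-1})$, with no auxiliary identity needed since the bottom block of $E_{1,j}(a)$ is already the identity.

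For the inductive step I write $\epsilon = E \cdot \epsilon''$ with $E$ a single generator and, by induction, pick $\tau \in \mathrm{E}_{2n-1}(R)$ with $(1 \perp \tau)\epsilon'' \in \mathrm{ESp}_{2n}(R)$. A direct block computation shows that the conjugate $F := (1 \perp \tau) E (1 \perp \tau)^{-1}$ keeps the \emph{type} of $E$: an $E_{i,1}$-type becomes $\bigl(\begin{smallmatrix}1 & 0 \\ w & I\end{smallmatrix}\bigr)$ with $w = \tau a e_{i-1}$, an $E_{1,j}$-type becomes $\bigl(\begin{smallmatrix}1 & v^t \\ 0 & I\end{smallmatrix}\bigr)$ with $v^t = a e_{j-1}^t \tau^{-1}$, and an interior-type becomes another interior-type. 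Applying the generator analysis to $F$ produces $\kappa \in \mathrm{E}_{2n-1}(R)$ with $(1 \perp \kappa)F \in \mathrm{ESp}_{2n}(R)$, and then $\rho := \kappa \tau$ works because $(1 \perp \rho)\epsilon = (1 \perp \kappa) F \cdot (1 \perp \tau) \epsilon''$ is a product of two elements of $\mathrm{ESp}_{2n}(R)$.

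The main obstacle is organizational rather than deep: I have to verify the conjugation formula for $F$ carefully, and confirm that the alternating identity $v^t \nu v = 0$ makes the column case close up to $C_{\psi_n}(v)$ under left-multiplication. Neither step is subtle, but both rely sensitively on the block shape $(1 \perp \tau)$ of the conjugator and on the specific structure of the Vaserstein matrices; without those, the three types would mix under conjugation and the induction would break down.
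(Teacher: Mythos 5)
Your proposal is correct, and it actually supplies an argument where the paper gives none: the paper states this lemma as an observation of Rao--Swan and points to Lemma 4.4 of \cite{JOA} for a detailed proof, and your induction on the length of $\epsilon$ as a word in elementary generators --- completing a first-column generator to $C_{\psi_n}(v)$ by the factor $1\perp\alpha_{\psi_n}(v)$, a first-row generator to $R_{\psi_n}(v)$ by $1\perp\beta_{\psi_n}(v)$, killing interior generators outright, and transporting the correction through the word via the conjugation $(1\perp\tau)E(1\perp\tau)^{-1}$ --- is essentially the argument behind that citation. Two ingredients you use should be made explicit, though neither is a gap. First, you assert $C_{\psi_n}(v),R_{\psi_n}(v)\in\mathrm{ESp}_{2n}(R)$ without proof; this is true with no hypothesis on $2$ (it is the easy inclusion $\mathrm{ESp}_{\psi_n}(R)\subseteq\mathrm{ESp}_{2n}(R)$, recorded in Remark \ref{ESp psi n=ESp 2n}), and it is the correct fact to invoke here, since the lemma does not assume $R=2R$ while the full equality of Lemma \ref{lemma:3.7} does. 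Second, in the inductive step the conjugate $F$ of a first-column or first-row generator is a block unipotent whose vector $w=a\tau e_{i-1}$ (or $v^t=ae_{j-1}^t\tau^{-1}$) is no longer a multiple of a coordinate vector, so your base-case computation must be carried out for an arbitrary vector; it does hold in that generality, because $\alpha_{\psi_n}(v)v=v+(v^t\nu v)d=v$ for every $v$ (the block $\nu$ of $\psi_n$ is alternating) and because $\alpha_{\psi_n}(v),\beta_{\psi_n}(v)\in\mathrm{E}_{2n-1}(R)$ for every $v$, as noted in Section \ref{section:2}; likewise $\kappa=\tau E'^{-1}\tau^{-1}\in\mathrm{E}_{2n-1}(R)$ in the interior case simply because $\mathrm{E}_{2n-1}(R)$ is a group. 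With those two points spelled out, your proof is complete and self-contained, which is arguably more than the paper itself provides for this lemma.
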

	
One can see Lemma 4.4 of \cite{JOA} for a detailed proof. Using the above observation one can prove the following equality.	
	
\begin{lemma}(Corollary 4.5, \cite{JOA})
\label{lemma:3.4}
	For $n\geq 2$ and $\epsilon\in\mathrm{E}_{2n}(R)$, we have an $\epsilon_0\in\mathrm{E}_{2n-1}(R)$ such that $\epsilon^t\psi_n\epsilon= (1\perp\epsilon_0)^t\psi_n(1\perp\epsilon_0)$.
\end{lemma}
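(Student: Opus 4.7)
The plan is to exploit Lemma~\ref{lemma:3.3}, but applied to $\epsilon^{-1}$ rather than to $\epsilon$ itself. Since $\mathrm{E}_{2n}(R)$ is closed under inversion, $\epsilon^{-1}\in\mathrm{E}_{2n}(R)$, so Lemma~\ref{lemma:3.3} yields some $\rho\in\mathrm{E}_{2n-1}(R)$ with $(1\perp\rho)\epsilon^{-1}\in\mathrm{ESp}_{2n}(R)$. Taking inverses and using that $\mathrm{ESp}_{2n}(R)$ is a group, this rearranges to a factorisation $\epsilon=\sigma\,(1\perp\rho)$, where $\sigma\in\mathrm{ESp}_{2n}(R)\subseteq\mathrm{Sp}_{2n}(R)$ and the non-symplectic correction factor $1\perp\rho$ sits on the \emph{right}.

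Once this factorisation is in hand, the conclusion drops out by a single line of computation:
\begin{equation*}
\epsilon^t\psi_n\epsilon \;=\; (1\perp\rho)^t\,\sigma^t\psi_n\sigma\,(1\perp\rho) \;=\; (1\perp\rho)^t\psi_n(1\perp\rho),
\end{equation*}
the last equality being the defining relation $\sigma^t\psi_n\sigma=\psi_n$ of the symplectic group. Setting $\epsilon_0:=\rho\in\mathrm{E}_{2n-1}(R)$ then gives exactly the identity asserted in the lemma.

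The only subtle point---and arguably the entire content of the argument---is recognising that Lemma~\ref{lemma:3.3} must be invoked on $\epsilon^{-1}$ rather than on $\epsilon$, so that after inversion the elementary correction $1\perp\rho$ is placed on the right of the factorisation. Had we used the lemma directly on $\epsilon$, the correction would sit on the left, and the symplectic factor would end up on the outside of the congruence $\epsilon^t(-)\epsilon$, where it cannot be absorbed. With the correct placement, the symplectic factor is on the outside and is killed by $\sigma^t\psi_n\sigma=\psi_n$. Beyond this observation there is no genuine obstacle: no case analysis, no induction on $n$, and no further structural input beyond Lemma~\ref{lemma:3.3} and the definition of $\mathrm{Sp}_{2n}(R)$.
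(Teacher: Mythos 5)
Your argument is correct and is exactly the intended route: the paper derives this statement from the Rao--Swan observation (Lemma~\ref{lemma:3.3}), and applying that lemma to $\epsilon^{-1}$ so that the factor $1\perp\rho$ lands on the right of $\epsilon=\sigma(1\perp\rho)$, then killing $\sigma$ via $\sigma^t\psi_n\sigma=\psi_n$, is precisely the proof of Corollary 4.5 in \cite{JOA}. No gaps; your closing remark about why the correction must sit on the right is the right observation.
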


\begin{lemma}(Lemma 3.6, 3.7, \cite{JOA})
\label{lemma:3.5}
	Let $\varphi$ and $\varphi^\ast$ be two invertible skew-symmetric matrices such that $\varphi=(1\perp\epsilon)^t\varphi^\ast(1\perp\epsilon)$ for some $\epsilon\in\mathrm{E}_{2n-1}(R)$. Then, we have
	\begin{equation*}
		\begin{aligned}
		\mathrm{Sp}_\varphi(R)&=(1\perp\epsilon)^{-1}\mathrm{Sp}_{\varphi^\ast}(R)(1\perp\epsilon),\\
		\mathrm{ESp}_{\varphi}(R)&=(1\perp\epsilon)^{-1}\mathrm{ESp}_{\varphi^\ast}(R)(1\perp\epsilon).
		\end{aligned}
	\end{equation*}
\end{lemma}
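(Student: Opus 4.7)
Write $g:=1\perp\epsilon$, so that the hypothesis becomes $\varphi=g^t\varphi^\ast g$, and note that $g\in\mathrm{GL}_{2n}(R)$ since $\epsilon\in\mathrm{E}_{2n-1}(R)$.

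For the first equality, I would argue directly from the definitions. If $\alpha\in\mathrm{Sp}_\varphi(R)$, substitute $\varphi=g^t\varphi^\ast g$ into $\alpha^t\varphi\alpha=\varphi$ to obtain $(g\alpha g^{-1})^t\varphi^\ast(g\alpha g^{-1})=\varphi^\ast$, so $g\alpha g^{-1}\in\mathrm{Sp}_{\varphi^\ast}(R)$. The same manipulation runs in reverse, giving $\mathrm{Sp}_\varphi(R)=g^{-1}\mathrm{Sp}_{\varphi^\ast}(R)g$.

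For the second equality, the strategy is to show that conjugation by $g$ carries the Vaserstein generators of $\mathrm{ESp}_{\varphi^\ast}(R)$ bijectively onto those of $\mathrm{ESp}_\varphi(R)$. Writing $\varphi=\begin{pmatrix}0&-c^t\\ c&\nu\end{pmatrix}$, $\varphi^{-1}=\begin{pmatrix}0&d^t\\ -d&\mu\end{pmatrix}$, and similarly $c^\ast,d^\ast,\nu^\ast,\mu^\ast$ for $\varphi^\ast$, a block computation of $g^t\varphi^\ast g$ forces $c=\epsilon^t c^\ast$ and $\nu=\epsilon^t\nu^\ast\epsilon$, and a dual computation of $g^{-1}(\varphi^\ast)^{-1}g^{-t}$ forces $d=\epsilon^{-1}d^\ast$ and $\mu=\epsilon^{-1}\mu^\ast\epsilon^{-t}$. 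Substituting these into the definition of $\alpha_\varphi$ gives
\begin{equation*}
\alpha_\varphi(\epsilon^{-1}v)=I_{2n-1}+(\epsilon^{-1}d^\ast)(v^t\epsilon^{-t})(\epsilon^t\nu^\ast\epsilon)=\epsilon^{-1}\alpha_{\varphi^\ast}(v)\epsilon,
\end{equation*}
and an analogous identity holds for $\beta_\varphi(\epsilon^{-1}v)$. A direct block multiplication then yields
\begin{equation*}
g^{-1}C_{\varphi^\ast}(v)g=C_\varphi(\epsilon^{-1}v),\qquad g^{-1}R_{\varphi^\ast}(v)g=R_\varphi(\epsilon^{-1}v).
\end{equation*}
Since $v\mapsto\epsilon^{-1}v$ is a bijection on $R^{2n-1}$, conjugation by $g$ sends the generating set of $\mathrm{ESp}_{\varphi^\ast}(R)$ bijectively onto that of $\mathrm{ESp}_\varphi(R)$, which gives the second equality.

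The only real obstacle is bookkeeping: extracting the four relations between $(c,d,\nu,\mu)$ and $(c^\ast,d^\ast,\nu^\ast,\mu^\ast)$ from the blockwise identity $\varphi=g^t\varphi^\ast g$ (and its inverse form), and then verifying that these relations conspire exactly so that the factors of $\epsilon$ collect on the outside of $\alpha_{\varphi^\ast}(v)$ and $\beta_{\varphi^\ast}(v)$. Once those algebraic identities are in place the conclusion is immediate, with no need to invoke the normality lemmas for $\mathrm{ESp}$ or any additional hypothesis on $R$.
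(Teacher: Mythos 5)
Your strategy is the expected one (the paper does not reprove this lemma but cites Lemmas 3.6 and 3.7 of \cite{JOA}; the conjugation formulas you need are exactly those the paper records later in Remark \ref{ESp phi Esp phi*}), and your treatment of the first equality, of the block relations $c=\epsilon^t c^\ast$, $\nu=\epsilon^t\nu^\ast\epsilon$, $d=\epsilon^{-1}d^\ast$, $\mu=\epsilon^{-1}\mu^\ast\epsilon^{-t}$, and of the column generators $C_{\varphi^\ast}(v)$ is correct. However, the claimed identity $(1\perp\epsilon)^{-1}R_{\varphi^\ast}(v)(1\perp\epsilon)=R_\varphi(\epsilon^{-1}v)$ is false in general: the block computation gives
\begin{equation*}
(1\perp\epsilon)^{-1}R_{\varphi^\ast}(v)(1\perp\epsilon)=\begin{pmatrix}1& v^t\epsilon\\ 0&\epsilon^{-1}\beta_{\varphi^\ast}(v)\epsilon\end{pmatrix},
\end{equation*}
whose first row is $(1,\ (\epsilon^t v)^t)$, whereas $R_\varphi(\epsilon^{-1}v)$ has first row $(1,\ v^t\epsilon^{-t})$; these agree only if $\epsilon\epsilon^t=I_{2n-1}$, which an arbitrary element of $\mathrm{E}_{2n-1}(R)$ does not satisfy. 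The ``analogous identity'' for $\beta$ therefore holds with a different twist: using $\mu=\epsilon^{-1}\mu^\ast\epsilon^{-t}$ and $c^t=(c^\ast)^t\epsilon$ one gets $\beta_\varphi(\epsilon^t v)=I_{2n-1}+\epsilon^{-1}\mu^\ast\epsilon^{-t}\,\epsilon^t v\,(c^\ast)^t\epsilon=\epsilon^{-1}\beta_{\varphi^\ast}(v)\epsilon$, hence $(1\perp\epsilon)^{-1}R_{\varphi^\ast}(v)(1\perp\epsilon)=R_\varphi(\epsilon^t v)$, exactly as in Remark \ref{ESp phi Esp phi*}.

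This slip does not sink the argument: $v\mapsto\epsilon^t v$ is still a bijection of $R^{2n-1}$, so conjugation by $1\perp\epsilon$ still carries the generating set $\{C_{\varphi^\ast}(v),R_{\varphi^\ast}(v)\}$ onto $\{C_\varphi(w),R_\varphi(w)\}$ and the equality $\mathrm{ESp}_\varphi(R)=(1\perp\epsilon)^{-1}\mathrm{ESp}_{\varphi^\ast}(R)(1\perp\epsilon)$ follows as you intend, with no hypothesis such as $R=2R$ needed. But as written the identity you assert for the row generators is wrong, so you should replace $\epsilon^{-1}v$ by $\epsilon^t v$ there and actually carry out the $\beta$ computation rather than declare it analogous.
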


\begin{lemma}(Lemma 3.8, \cite{JOA})
\label{lemma:3.6}
	Let $\varphi$ and $\varphi^\ast$ be two invertible skew-symmetric matrices such that $\varphi=(1\perp\epsilon)^t\varphi^\ast(1\perp\epsilon)$, for some $\epsilon\in\mathrm{E}_{2n-1}(R,I)$. Then,
	\begin{equation*}
		\mathrm{ESp}_{\varphi}(R,I)=(1\perp\epsilon)^{-1}\mathrm{ESp}_{\varphi^\ast}(R,I)(1\perp\epsilon).
	\end{equation*}
\end{lemma}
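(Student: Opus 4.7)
The plan is to reduce the claimed equality to an explicit block computation for the Vaserstein generators and then invoke Lemma~\ref{lemma:3.5}. First observe that the hypothesis is symmetric in $\varphi$ and $\varphi^\ast$: inverting gives $\varphi^\ast=(1\perp\epsilon^{-1})^t\varphi(1\perp\epsilon^{-1})$ with $\epsilon^{-1}\in\mathrm{E}_{2n-1}(R,I)$, so it suffices to prove the inclusion
\begin{equation*}
(1\perp\epsilon)^{-1}\mathrm{ESp}_{\varphi^\ast}(R,I)(1\perp\epsilon)\subseteq\mathrm{ESp}_\varphi(R,I).
\end{equation*}

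By definition, $\mathrm{ESp}_{\varphi^\ast}(R,I)$ is generated by conjugates $\eta h\eta^{-1}$ with $\eta\in\mathrm{ESp}_{\varphi^\ast}(R)$ and $h\in\mathrm{ESp}_{\varphi^\ast}(I)$. Writing
\begin{equation*}
(1\perp\epsilon)^{-1}\eta h\eta^{-1}(1\perp\epsilon)=\tilde\eta\,\tilde h\,\tilde\eta^{-1},
\end{equation*}
Lemma~\ref{lemma:3.5} places $\tilde\eta\in\mathrm{ESp}_\varphi(R)$, and the normality of $\mathrm{ESp}_\varphi(R,I)$ in $\mathrm{Sp}_\varphi(R)$ (Theorem~5.2 of \cite{Norm}) then reduces everything to showing that $\tilde h=(1\perp\epsilon)^{-1}h(1\perp\epsilon)$ lies in $\mathrm{ESp}_\varphi(R,I)$ for each generator $h=C_{\varphi^\ast}(v)$ or $h=R_{\varphi^\ast}(v)$ with $v\in I^{2n-1}$.

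To carry out this key step, I would extract from $\varphi=(1\perp\epsilon)^t\varphi^\ast(1\perp\epsilon)$ and its inverse the conversion formulas $c=\epsilon^tc^\ast$, $\nu=\epsilon^t\nu^\ast\epsilon$, $d=\epsilon^{-1}d^\ast$, $\mu=\epsilon^{-1}\mu^\ast\epsilon^{-t}$, and substitute them into $\alpha_{\varphi^\ast}(v)=I_{2n-1}+d^\ast v^t\nu^\ast$ and $\beta_{\varphi^\ast}(v)=I_{2n-1}+\mu^\ast vc^{\ast t}$ to derive the bilinear identities
\begin{equation*}
\epsilon^{-1}\alpha_{\varphi^\ast}(v)\epsilon=\alpha_\varphi(\epsilon^{-1}v),\qquad \epsilon^{-1}\beta_{\varphi^\ast}(v)\epsilon=\beta_\varphi(\epsilon^tv).
\end{equation*}
Unpacking the block product then yields the clean formulas
\begin{equation*}
(1\perp\epsilon)^{-1}C_{\varphi^\ast}(v)(1\perp\epsilon)=C_\varphi(\epsilon^{-1}v),\qquad (1\perp\epsilon)^{-1}R_{\varphi^\ast}(v)(1\perp\epsilon)=R_\varphi(\epsilon^tv).
\end{equation*}
Since $\epsilon^{-1}v,\epsilon^tv\in I^{2n-1}$ whenever $v\in I^{2n-1}$, the right-hand sides lie in $\mathrm{ESp}_\varphi(I)\subseteq\mathrm{ESp}_\varphi(R,I)$, closing the argument. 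The only technical obstacle I foresee is the verification of the two displayed bilinear identities; these are straightforward substitutions using the conversion formulas above, but care is needed with transposes and with the order of the matrix products ($\nu$ sitting on the right versus $\mu$ on the left, and $\epsilon$ versus $\epsilon^{-t}$).
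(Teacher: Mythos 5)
Your route is the one the paper itself relies on (the lemma is quoted from \cite{JOA} rather than reproved here): reduce by symmetry to one inclusion, pass to the generators $\eta h\eta^{-1}$ of the normal closure, handle $\tilde\eta$ by Lemma \ref{lemma:3.5}, and dispose of $\tilde h$ by the conjugation identities $(1\perp\epsilon)^{-1}C_{\varphi^\ast}(v)(1\perp\epsilon)=C_\varphi(\epsilon^{-1}v)$ and $(1\perp\epsilon)^{-1}R_{\varphi^\ast}(v)(1\perp\epsilon)=R_\varphi(\epsilon^t v)$, which are precisely Remark \ref{ESp phi Esp phi*} of this paper. Your conversion formulas $c=\epsilon^t c^\ast$, $\nu=\epsilon^t\nu^\ast\epsilon$, $d=\epsilon^{-1}d^\ast$, $\mu=\epsilon^{-1}\mu^\ast\epsilon^{-t}$ are correct, the two bilinear identities do check out by direct substitution, and $\epsilon^{-1}v,\epsilon^t v\in I^{2n-1}$ is immediate, so the computational core is sound.

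The one step I would not accept as written is the appeal to Theorem \ref{lemma:norm of esp} (Theorem 5.2 of \cite{Norm}). That theorem carries hypotheses absent from the present lemma, namely $R=2R$, Pfaffian $1$ and $n\geq 2$, so invoking it proves the statement only under extra assumptions; worse, it is the downstream normality theorem that conjugation lemmas of this kind are meant to feed, so using it here is circular within the larger development. It is also unnecessary: by Lemma \ref{lemma:3.5} your $\tilde\eta$ lies in $\mathrm{ESp}_\varphi(R)$, not merely in $\mathrm{Sp}_\varphi(R)$, and $\mathrm{ESp}_\varphi(R,I)$ is by definition the normal closure of $\mathrm{ESp}_\varphi(I)$ in $\mathrm{ESp}_\varphi(R)$, hence normal in $\mathrm{ESp}_\varphi(R)$; since $\tilde h=C_\varphi(\epsilon^{-1}v)$ or $R_\varphi(\epsilon^t v)$ lies in $\mathrm{ESp}_\varphi(I)$, the conjugate $\tilde\eta\,\tilde h\,\tilde\eta^{-1}$ is in $\mathrm{ESp}_\varphi(R,I)$ with no hypotheses beyond those stated. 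With that substitution the argument is complete. One further observation: as repaired, your proof never uses $\epsilon\in\mathrm{E}_{2n-1}(R,I)$, only $\epsilon\in\mathrm{E}_{2n-1}(R)$; that is not an error, but you should say explicitly that you are proving the statement for any $\epsilon\in\mathrm{E}_{2n-1}(R)$ (of which the stated relative case is a special instance), rather than leaving the unused hypothesis unremarked.
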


The following lemma shows that the elementary symplectic group $\mathrm{ESp}_{\varphi}(R)$ with respect to a skew-symmetric matrix $\varphi$ can be considered as a generalization of the elementary symplectic group $\mathrm{ESp}_{2n}(R)$. 

\begin{lemma}(Lemma 3.5, \cite{JOA})
	\label{lemma:3.7}
	Let $R$ be a ring with $R=2R$ and $n\geq 2$. Then,  $\mathrm{ESp}_{\psi_n}(R)=\mathrm{ESp}_{2n}(R)$.
\end{lemma}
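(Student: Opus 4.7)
The plan is to prove the two inclusions $\mathrm{ESp}_{\psi_n}(R) \subseteq \mathrm{ESp}_{2n}(R)$ and $\mathrm{ESp}_{2n}(R) \subseteq \mathrm{ESp}_{\psi_n}(R)$ separately, by showing each generating set is contained in the other. First I would specialize the Vaserstein construction to $\varphi = \psi_n$. Since $\psi_n^{-1} = -\psi_n$, the decomposition $\psi_n = \begin{pmatrix} 0 & -c^t \\ c & \nu \end{pmatrix}$ gives $c = d = -e_1$ (the first standard basis vector of $R^{2n-1}$), $\mu = -\nu$, and $\nu$ equals the direct sum $0 \oplus \psi_{n-1}$ of size $2n-1$. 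Consequently $\alpha_{\psi_n}(v) = I_{2n-1} + dv^t\nu$ is upper triangular with only its first row nontrivial, and symmetrically $\beta_{\psi_n}(v)$ has nontrivial entries only in its first column.

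For $\mathrm{ESp}_{\psi_n}(R) \subseteq \mathrm{ESp}_{2n}(R)$, these explicit forms exhibit $C_{\psi_n}(v)$ and $R_{\psi_n}(v)$ as products of elementary symplectic matrices supported on the first row and first column. Concretely, the nonzero off-diagonal entries of $C_{\psi_n}(v)$ are $v_1$ at $(2,1)$, $v_i$ at $(i+1,1)$ for $2 \le i \le 2n-1$, and appropriately signed $v_j$'s in the second row; these fit the templates of $se_{21}(v_1)$ (short root) and $se_{i+1,1}(v_i)$ (long root). Reorganizing the factors and accounting for the quadratic crossover terms that arise from multiplying long-root generators yields $C_{\psi_n}(v)$ as an explicit product of such $se$ matrices; the treatment of $R_{\psi_n}(v)$ is symmetric.

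For the reverse inclusion I would show that every generator $se_{ij}(a)$ of $\mathrm{ESp}_{2n}(R)$ lies in $\mathrm{ESp}_{\psi_n}(R)$. The generators attached to the first symplectic block are immediate: $se_{12}(a) = R_{\psi_n}(ae_1)$ and $se_{21}(a) = C_{\psi_n}(ae_1)$. The long-root generators $se_{i,1}(a)$ and $se_{1,i}(a)$ for $i \ge 3$ are extracted by choosing $v$ with a single nonzero coordinate in the formulas of the previous step and cancelling the spurious quadratic terms using further $C_{\psi_n}$'s or $R_{\psi_n}$'s. The remaining generators, namely the short-root $se_{2k-1,2k}(a), se_{2k,2k-1}(a)$ for $k \ge 2$ and the long-root $se_{ij}(a)$ with $\{i,j\} \cap \{1,2\} = \emptyset$, are produced via commutators of Vaserstein matrices with carefully chosen parameters, transporting first-block data into the $k$-th symplectic block.

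The main obstacle, and the point where the hypothesis $R = 2R$ is essential, is this last step. Standard commutator identities involving $C_{\psi_n}(v)$ and $R_{\psi_n}(w)$ naturally produce a factor of $2$ in the resulting short-root generator of the $k$-th block, reflecting the alternating-versus-skew-symmetric distinction recalled earlier in the paper. Hence the commutator arguments yield only $se_{2k-1,2k}(2a) \in \mathrm{ESp}_{\psi_n}(R)$, and invertibility of $2$ in $R$, which follows from $R = 2R$, is then used to conclude $se_{2k-1,2k}(a) \in \mathrm{ESp}_{\psi_n}(R)$, finishing the proof.
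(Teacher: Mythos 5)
The paper does not prove this lemma at all: it is imported verbatim as Lemma 3.5 of \cite{JOA}, so there is no in-text argument to compare with, and your sketch should be judged on its own. Judged so, it is a correct reconstruction of the standard generator-level proof. For $\varphi=\psi_n$ one indeed has $c=d=-e_1$, $\mu=-\nu$, $\nu=0\perp\psi_{n-1}$, so $C_{\psi_n}(v)$ and $R_{\psi_n}(v)$ are supported on the first column (resp.\ row) together with the second row (resp.\ column), and they factor into the matrices $se_{i1}$, $se_{1j}$; your attention to the quadratic cross terms is warranted, since for instance $se_{2k,1}(a)se_{2k-1,1}(b)$ picks up an extra $ab\,e_{21}$, a point the paper's Remark \ref{ESp psi n=ESp 2n} states without this correction. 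Conversely, single-coordinate choices give exact identities $C_{\psi_n}(ae_m)=se_{m+1,1}(a)$ and $R_{\psi_n}(ae_m)=se_{1,m+1}(a)$ (so, contrary to your phrasing, no spurious quadratic terms need cancelling in that extraction step), and commutators such as $[se_{2k-1,1}(a),se_{1,2l-1}(b)]=se_{2k-1,2l-1}(ab)$ for $l\neq k$, together with $[se_{2k-1,1}(a),se_{1,2k}(b)]=se_{2k-1,2k}(2ab)$, produce all remaining generators; the hypothesis $R=2R$ (equivalently $2\in R^\times$) enters exactly where you place it, namely to remove the factor $2$ in the one-entry generators $se_{2k-1,2k}(a)$, $se_{2k,2k-1}(a)$ of the $k$-th block. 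One terminological quibble: your root-length labels are reversed --- in type $C_n$ the one-entry generators $se_{21}$, $se_{2k-1,2k}$ are the long-root elements and the two-entry ones are short --- but this does not affect the correctness of the argument.
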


%\begin{proof} 
%	$\mathrm{ESp}_{\psi_n}(R)\subseteq\mathrm{ESp}_{2n}(R)$ as for   $v=(a_1,\cdots,a_{2n-1})^t\in R^{2n-1}$, we have 
%	\begin{equation*}
%		C_{\psi_n}(v)=\prod_{i=2}^{2n}se_{i1}(a_{i-1}) \text{ and }
%		R_{\psi_n}(v)=\prod_{i=2}^{2n}se_{1i}(a_{i-1}).
%	\end{equation*}
%	
%	For integers $i,j$ with $i\neq j,\sigma(j)$ and for $a,b\in R$, we have 
%	\begin{equation*}
%		\begin{aligned}
%			\lbrack  se_{i \sigma(i)}(a),se_{\sigma(i) j}(b) \rbrack &=se_{ij}(ab)se_{\sigma(j)j}((-1)^{i+j}ab^2),\\
%			[se_{ik}(a),se_{kj}(b)]&=se_{ij}(ab), \text{ if  } k\neq\sigma(i),\sigma(j),\\
%			[se_{ik}(a),se_{k\sigma(i)}(b)]&=se_{i\sigma(i)}(2ab), \text{ if } k\neq i,\sigma(i).
%		\end{aligned}
%	\end{equation*}
%	
%	Using these identities, $se_{ij}(a)$, for $i,j\neq 1$, can be written as a product of elements of the form $se_{1i}(x)$ and $se_{j1}(y)$, for $x,y\in R$. Also, $se_{1i}(a), se_{j1}(b)\in\mathrm{ESp}_{\psi_n}(R)$. So, $\mathrm{ESp}_{2n}(R)\subseteq\mathrm{ESp}_{\psi_n}(R)$. 
%	\qed
%\end{proof}

\medskip
Now, we will state a result which gives a relation between the relative elementary symplectic group $\mathrm{ESp}_{2n}(R[X],(X))$ and the group $\mathrm{Sp}_{2n}(R[X],(X))$. The proof of this lemma uses ideas from Lemma 2.7 of \cite{Kop}. 

\begin{lemma}(Lemma 3.5, \cite{AC2})
	\label{ESp cap Sp X}
	For a ring $R$, we have
	\begin{equation*}
		\mathrm{ESp}_{2n}(R[X],(X))=\mathrm{ESp}_{2n}(R[X])\cap \mathrm{Sp}_{2n}(R[X],(X)).
	\end{equation*}
\end{lemma}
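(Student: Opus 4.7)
The inclusion $\mathrm{ESp}_{2n}(R[X],(X))\subseteq\mathrm{ESp}_{2n}(R[X])\cap\mathrm{Sp}_{2n}(R[X],(X))$ is immediate from the definitions: every generator $se_{kl}(a)\,se_{ij}(x)\,se_{kl}(-a)$ with $a\in R[X]$ and $x\in(X)$ obviously lies in $\mathrm{ESp}_{2n}(R[X])$, and setting $X=0$ sends each such generator to the identity (since $x\in(X)$), so the product is in $\mathrm{Sp}_{2n}(R[X],(X))$.

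The content is the reverse inclusion, so my plan is to take $\alpha\in\mathrm{ESp}_{2n}(R[X])\cap\mathrm{Sp}_{2n}(R[X],(X))$ and express it as a product of conjugates of elements of $\mathrm{ESp}_{2n}((X))$ by elements of $\mathrm{ESp}_{2n}(R[X])$. Since $\alpha\in\mathrm{ESp}_{2n}(R[X])$, write
\begin{equation*}
\alpha(X)=\gamma_1(X)\gamma_2(X)\cdots\gamma_m(X),\qquad \gamma_k(X)=se_{i_kj_k}\bigl(f_k(X)\bigr),\ f_k(X)\in R[X].
\end{equation*}
Set $\delta_k:=\gamma_k(X)\gamma_k(0)^{-1}=se_{i_kj_k}\bigl(f_k(X)-f_k(0)\bigr)$; since $f_k(X)-f_k(0)\in(X)$, each $\delta_k\in\mathrm{ESp}_{2n}((X))$. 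Also set $u_k(X):=\gamma_1(X)\cdots\gamma_k(X)\in\mathrm{ESp}_{2n}(R[X])$, with $u_0=I$.

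The key computation is the standard telescoping identity
\begin{equation*}
\alpha(X)\alpha(0)^{-1}=\prod_{k=m}^{1}\ u_{k-1}(X)\,\delta_k\,u_{k-1}(X)^{-1},
\end{equation*}
which one verifies by induction on $m$ (the case $m=2$ being $x_1x_2y_2^{-1}y_1^{-1}=x_1(x_2y_2^{-1})x_1^{-1}\cdot x_1y_1^{-1}$, and the inductive step following by the same kind of factorization applied to $u_{m-1}v_{m-1}^{-1}$). Each factor on the right-hand side is the conjugate of a generator of $\mathrm{ESp}_{2n}((X))$ by an element of $\mathrm{ESp}_{2n}(R[X])$, hence lies in the normal closure $\mathrm{ESp}_{2n}(R[X],(X))$. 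Therefore the product $\alpha(X)\alpha(0)^{-1}$ is in $\mathrm{ESp}_{2n}(R[X],(X))$.

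Finally, since $\alpha\in\mathrm{Sp}_{2n}(R[X],(X))$ we have $\alpha(0)=I_{2n}$, so $\alpha(X)=\alpha(X)\alpha(0)^{-1}\in\mathrm{ESp}_{2n}(R[X],(X))$, completing the reverse inclusion. The main obstacle is purely bookkeeping, namely writing down the telescoping factorization cleanly; there is no genuine analytic or algebraic difficulty once the generators of $\alpha$ and the substitution $X\mapsto 0$ trick are in hand.
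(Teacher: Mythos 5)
Your proof is correct: the additivity $se_{ij}(a)se_{ij}(b)=se_{ij}(a+b)$ justifies $\delta_k\in\mathrm{ESp}_{2n}((X))$, the telescoping identity (which is exactly the device of Lemma \ref{prod aibi}) expresses $\alpha(X)\alpha(0)^{-1}$ as a product of $\mathrm{ESp}_{2n}(R[X])$-conjugates of generators of $\mathrm{ESp}_{2n}((X))$, and $\alpha(0)=I_{2n}$ follows from $\alpha\in\mathrm{Sp}_{2n}(R[X],(X))$. The paper itself only cites this statement from \cite{AC2} (with ideas from Lemma 2.7 of \cite{Kop}) without reproducing a proof, and your splitting-and-conjugating argument is precisely the standard one behind that reference, so this is essentially the same approach.
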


\medskip
\noindent
\textbf{Notation:} Let $\varphi$ be an invertible skew-symmetric matrix of size $2n$ over $R$. 
\begin{equation*}
	\mathrm{Sp}_{\varphi\otimes R[X]}(R[X]):=\{\alpha\in\mathrm{GL}_{2n}(R[X])\;\big|\; \alpha^t\varphi\alpha=\varphi\}.
\end{equation*}

By $\mathrm{ESp}_{\varphi\otimes R[X]}(R[X])$, we mean the elementary symplectic group generated by $C_\varphi(v)$ and $R_\varphi(v)$, where $v\in R[X]^{2n-1}$.\\

\medskip
Now, we will state a lemma which is a generalization of Lemma \ref{ESp cap Sp X} with respect to an invertible skew-symmetric matrix.

\begin{lemma}({Lemma 3.14, \cite{Norm}})
\label{lemma:resp=esp cap rsp}
	Let $R$ be a ring with $R=2R$ and $\varphi$ be an invertible skew-symmetric matrix of size $2n$. Then,
	\begin{equation*}
	\mathrm{ESp}_{\varphi\otimes R[X]}(R[X],(X))=\mathrm{ESp}_{\varphi\otimes R[X]}(R[X])\cap\mathrm{Sp}_{\varphi\otimes R[X]}(R[X],(X)).
	\end{equation*} 
\end{lemma}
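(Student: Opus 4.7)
The plan is to handle the two inclusions separately; only the right-to-left inclusion carries any real content. For $\subseteq$, each generator $C_\varphi(v)$ or $R_\varphi(v)$ with $v\in XR[X]^{2n-1}$ reduces to $I_{2n}$ modulo $X$, so lies in $\mathrm{Sp}_{\varphi\otimes R[X]}(R[X],(X))$. Since this kernel is normal in $\mathrm{Sp}_{\varphi\otimes R[X]}(R[X])$, and $\mathrm{ESp}_{\varphi\otimes R[X]}(R[X],(X))$ is by definition the normal closure of those generators in $\mathrm{ESp}_{\varphi\otimes R[X]}(R[X]) \subseteq \mathrm{Sp}_{\varphi\otimes R[X]}(R[X])$, the left-hand side sits inside the right-hand side.

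For $\supseteq$, I would take $\alpha(X)$ in the intersection and write $\alpha(X) = w_1(X)\cdots w_m(X)$, with each $w_i$ a generator of the form $C_\varphi(v_i(X))^{\pm 1}$ or $R_\varphi(v_i(X))^{\pm 1}$. Since $\alpha(0)=I_{2n}$, the standard telescoping identity gives
\begin{equation*}
\alpha(X) \,=\, \alpha(X)\alpha(0)^{-1} \,=\, \prod_{k=1}^{m} Q_{k-1}\,s_k(X)\,Q_{k-1}^{-1},
\end{equation*}
where $s_k(X) := w_k(X)\,w_k(0)^{-1}$ and $Q_{k-1} := w_1(0)\cdots w_{k-1}(0) \in \mathrm{ESp}_\varphi(R) \subseteq \mathrm{ESp}_{\varphi\otimes R[X]}(R[X])$. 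Because $\mathrm{ESp}_{\varphi\otimes R[X]}(R[X],(X))$ is normal in $\mathrm{ESp}_{\varphi\otimes R[X]}(R[X])$ by construction, the proof reduces to showing $s_k(X) \in \mathrm{ESp}_{\varphi\otimes R[X]}((X))$ for each $k$.

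The core computation will be a ``group law'' for the Vaserstein generators. Using the block decompositions of $\varphi$ and $\varphi^{-1}$ from the definition, the identity $\varphi\varphi^{-1} = I_{2n}$ yields the orthogonality relations $\nu d = 0$, $\mu c = 0$, $c^t\mu = 0$, $d^t\nu = 0$, and $c^t d = 1$. A direct block multiplication then gives
\begin{equation*}
C_\varphi(v)\,C_\varphi(w) \,=\, C_\varphi\!\bigl(v + w + (v^t\nu w)\,d\bigr),\qquad C_\varphi(v)^{-1} = C_\varphi(-v),
\end{equation*}
and the analogous $R_\varphi(v)R_\varphi(w) = R_\varphi\!\bigl(v+w+(v^t\mu w)c\bigr)$ with $R_\varphi(v)^{-1} = R_\varphi(-v)$. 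Writing $v_k(X) = v_k(0) + X u_k(X)$ and using $v_k(0)^t\nu v_k(0) = 0$ by skew-symmetry of $\nu$, the factor $s_k(X)$ simplifies in every case (both $C$ and $R$, either sign) to a single generator whose argument is visibly in $X R[X]^{2n-1}$, placing it inside $\mathrm{ESp}_{\varphi\otimes R[X]}((X))$.

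The main obstacle will be the group-law computation itself, which hinges on the orthogonality relations read off from $\varphi\varphi^{-1}=I_{2n}$; once these are in hand the rest is formal telescoping plus the definitional normality of the relative elementary group. The hypothesis $R=2R$ does not appear to enter this direct argument, but is consistent with the surrounding framework; an alternative route would be to reduce locally to the $\psi_n$-case via Lemmas \ref{lemma:3.2}, \ref{lemma:3.5}, and \ref{ESp cap Sp X}, in which case $R=2R$ would enter through Lemma \ref{lemma:3.7}.
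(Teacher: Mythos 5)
Your proposal is essentially correct, but note that the paper itself gives no proof of this statement: it is imported verbatim from Lemma 3.14 of \cite{Norm}, so there is no in-text argument to compare against line by line. Judged on its own terms, your argument goes through. The inclusion $\subseteq$ is immediate, as you say, since the generators of $\mathrm{ESp}_{\varphi\otimes R[X]}((X))$ evaluate to $I_{2n}$ at $X=0$ and the kernel $\mathrm{Sp}_{\varphi\otimes R[X]}(R[X],(X))$ is normal in $\mathrm{Sp}_{\varphi\otimes R[X]}(R[X])$. For $\supseteq$, your orthogonality relations $\nu d=0$, $d^t\nu=0$, $\mu c=0$, $c^t\mu=0$, $c^td=1$ do follow from $\varphi\varphi^{-1}=\varphi^{-1}\varphi=I_{2n}$, and with them the group laws $C_\varphi(v)C_\varphi(w)=C_\varphi(v+w+(v^t\nu w)d)$ and $R_\varphi(v)R_\varphi(w)=R_\varphi(v+w+(v^t\mu w)c)$ check out; combined with the telescoping identity (which is the same device as Lemma \ref{prod aibi}, used in the paper's proof of the Dilation principle) and the definition of the relative group as a normal closure, each factor $w_k(X)w_k(0)^{-1}$ is a single generator with argument in $XR[X]^{2n-1}$, which finishes the argument. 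One small correction: your closing remark that $R=2R$ does not enter is not quite right. You use $v^t\nu v=0$ and $C_\varphi(v)^{-1}=C_\varphi(-v)$; for a merely skew-symmetric $\varphi$ one only gets $2\,v^t\nu v=0$ (indeed $C_\varphi(v)^{-1}=C_\varphi(-v+(v^t\nu v)d)$ in general), so the hypothesis $R=2R$, equivalently that skew-symmetric blocks are alternating, is exactly what makes these identities valid. Compared with the route the authors follow elsewhere in the paper — reduction to the standard form $\psi_n$ over localizations via Lemma \ref{lemma:3.2} and Lemma \ref{lemma:3.5}, together with the $\psi_n$-case Lemma \ref{ESp cap Sp X} (itself proved \`a la Kopeiko by splitting $se_{ij}(f(X))=se_{ij}(f(0))se_{ij}(Xg(X))$ and telescoping) — your argument works directly with the Vaserstein generators for an arbitrary $\varphi$, trading the additivity of the $se_{ij}$ for the derived group law; this avoids localization and any Pfaffian-one hypothesis and is arguably more self-contained, at the cost of the block computation you identify as the main obstacle.
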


\medskip
\noindent
\textbf{Notation:} Let $R$ and $S$ be two commutative rings with unity and let $f:R\rightarrow S$ be a ring homomorphism. If $\delta=(a_{ij})$, then define $f(\delta)=(f(a_{ij}))$. For a maximal ideal $\ideal{m}$ of $R$, the group $\mathrm{ESp}_{\varphi\otimes R_\ideal{m}[X]}(R_\ideal{m}[X])$ is the elementary symplectic group with respect to the matrix $f(\varphi)$, where $f:R\rightarrow R_\ideal{m}$ is the map $a\mapsto \frac{a}{1}$.

\begin{lemma}(Lemma 3.10, \cite{Norm})
\label{lemma:3.11}
	Let $\varphi$ be an invertible skew-symmetric matrix of Pfaffian 1 of size $2n$ over $R$ with $R=2R$. Let $\ideal{m}$ be a maximal ideal of $R$. Then, $\mathrm{ESp}_{\varphi\otimes R_\ideal{m}[X]}(R_\ideal{m}[X])$ is a normal subgroup of $\mathrm{Sp}_{\varphi\otimes R_\ideal{m}[X]}(R_\ideal{m}[X])$. 
\end{lemma}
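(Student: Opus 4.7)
The plan is to reduce the statement to the standard symplectic case over $R_\ideal{m}[X]$ by exhibiting $\varphi$, after a suitable elementary conjugation, as the standard skew-symmetric form $\psi_n$, and then invoking Kopeiko's classical normality theorem (Lemma \ref{lemma:3.1}).

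First, I would localize at $\ideal{m}$: the image of $\varphi$ in $\mathrm{M}_{2n}(R_\ideal{m})$ is still an invertible skew-symmetric matrix of Pfaffian $1$, and $R_\ideal{m}$ is local. So Lemma \ref{lemma:3.2} yields $\epsilon\in\mathrm{E}_{2n}(R_\ideal{m})$ with $\varphi=\epsilon^t\psi_n\epsilon$, an identity which of course persists after the flat base change $R_\ideal{m}\hookrightarrow R_\ideal{m}[X]$. Next, applying Lemma \ref{lemma:3.4} to $\epsilon$ produces $\epsilon_0\in\mathrm{E}_{2n-1}(R_\ideal{m})$ such that
\begin{equation*}
\varphi=\epsilon^t\psi_n\epsilon=(1\perp\epsilon_0)^t\psi_n(1\perp\epsilon_0),
\end{equation*}
so $\varphi$ is realized as an $(1\perp\mathrm{E}_{2n-1})$-conjugate of $\psi_n$ over $R_\ideal{m}[X]$.

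Now I would apply Lemma \ref{lemma:3.5} over the ring $R_\ideal{m}[X]$ to this relation, yielding simultaneously
\begin{equation*}
\mathrm{Sp}_{\varphi\otimes R_\ideal{m}[X]}(R_\ideal{m}[X])=(1\perp\epsilon_0)^{-1}\mathrm{Sp}_{\psi_n\otimes R_\ideal{m}[X]}(R_\ideal{m}[X])(1\perp\epsilon_0),
\end{equation*}
\begin{equation*}
\mathrm{ESp}_{\varphi\otimes R_\ideal{m}[X]}(R_\ideal{m}[X])=(1\perp\epsilon_0)^{-1}\mathrm{ESp}_{\psi_n\otimes R_\ideal{m}[X]}(R_\ideal{m}[X])(1\perp\epsilon_0).
\end{equation*}
Because the hypothesis $R=2R$ passes to $R_\ideal{m}[X]$, Lemma \ref{lemma:3.7} identifies $\mathrm{ESp}_{\psi_n\otimes R_\ideal{m}[X]}(R_\ideal{m}[X])=\mathrm{ESp}_{2n}(R_\ideal{m}[X])$, and $\mathrm{Sp}_{\psi_n\otimes R_\ideal{m}[X]}(R_\ideal{m}[X])$ is the usual $\mathrm{Sp}_{2n}(R_\ideal{m}[X])$ by definition.

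Finally, Kopeiko's result (Lemma \ref{lemma:3.1}, taking the ideal to be the whole ring $R_\ideal{m}[X]$) tells us that $\mathrm{ESp}_{2n}(R_\ideal{m}[X])$ is normal in $\mathrm{Sp}_{2n}(R_\ideal{m}[X])$ for $n\geq 2$. Normality is preserved under simultaneous conjugation of both groups by the fixed element $(1\perp\epsilon_0)^{-1}$, so the two displayed identities transport this normality to $\mathrm{ESp}_{\varphi\otimes R_\ideal{m}[X]}(R_\ideal{m}[X])\trianglelefteq\mathrm{Sp}_{\varphi\otimes R_\ideal{m}[X]}(R_\ideal{m}[X])$, as required. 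There is no genuine obstacle here; the only thing to verify carefully is that the elementary matrices $\epsilon,\epsilon_0$ produced over the local ring $R_\ideal{m}$ may be regarded as lying in $\mathrm{E}_*(R_\ideal{m}[X])$ via the inclusion $R_\ideal{m}\hookrightarrow R_\ideal{m}[X]$, which is immediate.
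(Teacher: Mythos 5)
Your argument is correct, and it is the natural reduction: note that this paper does not prove Lemma \ref{lemma:3.11} itself (it is quoted from \cite{Norm}), so the comparison can only be with the toolkit recalled in Section \ref{section:3}, all of which you use exactly as intended. Localizing preserves ``skew-symmetric, invertible, Pfaffian $1$'', Lemma \ref{lemma:3.2} together with Lemma \ref{lemma:3.4} gives $\varphi=(1\perp\epsilon_0)^t\psi_n(1\perp\epsilon_0)$ over $R_\ideal{m}$, hence over $R_\ideal{m}[X]$, and Lemma \ref{lemma:3.5} applied over $R_\ideal{m}[X]$ transports Kopeiko's normality (Lemma \ref{lemma:3.1} with $I$ the whole ring), after the identification $\mathrm{ESp}_{\psi_n}(R_\ideal{m}[X])=\mathrm{ESp}_{2n}(R_\ideal{m}[X])$ from Lemma \ref{lemma:3.7}, which applies because $R=2R$ forces $2$ to be a unit in $R_\ideal{m}[X]$. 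The only caveat worth recording is that the hypothesis $n\geq 2$, suppressed in the statement as quoted but needed in Lemmas \ref{lemma:3.1}, \ref{lemma:3.4} and \ref{lemma:3.7}, must be assumed (for $n=1$ the claim amounts to normality of $\mathrm{E}_2$ in $\mathrm{SL}_2$ over $R_\ideal{m}[X]$, which is not available by these methods); with that understood, your proof is complete.
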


\medskip
Now, we recall an analogue of Lemma \ref{lemma:3.1} for $\mathrm{ESp}_\varphi(R,I)$.
	
\begin{thm}(Theorem 5.2, \cite{Norm})
\label{lemma:norm of esp}
	Let $\varphi$ be a skew-symmetric matrix of Pfaffian $1$ of size $2n$ over $R$ with $n\geq 2$. Assume that $R=2R$. Let $I$ be an ideal of $R$. Then, $\mathrm{ESp}_{\varphi}(R,I)$ is a normal subgroup of $\mathrm{Sp}_\varphi(R)$.
\end{thm}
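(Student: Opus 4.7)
The plan is to carry out the standard Quillen--Suslin local-global argument, using the local normality of Lemma~\ref{lemma:3.11}, the polynomial ring trick to upgrade from local information to global membership in the elementary symplectic group, and finally Lemma~\ref{lemma:resp=esp cap rsp} to promote ``elementary symplectic'' to ``relative elementary symplectic.''

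First I would reduce to the case of conjugates of a single generator. Any element of $\mathrm{ESp}_\varphi(R,I)$ is a product of terms of the form $\theta g \theta^{-1}$ with $\theta \in \mathrm{ESp}_\varphi(R)$ and $g \in \{C_\varphi(v),\,R_\varphi(v) : v \in I^{2n-1}\}$. Since for $\gamma \in \mathrm{Sp}_\varphi(R)$ the product $\gamma\theta$ is again in $\mathrm{Sp}_\varphi(R)$, it suffices to prove $\delta g \delta^{-1} \in \mathrm{ESp}_\varphi(R,I)$ for every $\delta \in \mathrm{Sp}_\varphi(R)$ and every generator $g$. The key move is the Quillen trick: replacing $g = C_\varphi(v)$ by its polynomial deformation $g(X) := C_\varphi(Xv) \in \mathrm{ESp}_{\varphi \otimes R[X]}(R[X],(X))$ and setting $h(X) := \delta\, g(X)\, \delta^{-1}$. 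Because $h(0) = I$, we automatically have $h(X) \in \mathrm{Sp}_{\varphi \otimes R[X]}(R[X],(X))$. If I can additionally establish $h(X) \in \mathrm{ESp}_{\varphi \otimes R[X]}(R[X])$, then Lemma~\ref{lemma:resp=esp cap rsp} forces $h(X) \in \mathrm{ESp}_{\varphi \otimes R[X]}(R[X],(X))$, and evaluating at $X = 1$ yields $\delta g \delta^{-1} \in \mathrm{ESp}_\varphi(R,I)$, as required. The case $g = R_\varphi(v)$ is symmetric.

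The core task is therefore to prove $h(X) \in \mathrm{ESp}_{\varphi \otimes R[X]}(R[X])$ by a local-global argument. At each maximal ideal $\mathfrak{m}$ of $R$, Lemma~\ref{lemma:3.11} asserts that $\mathrm{ESp}_{\varphi \otimes R_{\mathfrak{m}}[X]}(R_{\mathfrak{m}}[X])$ is normal in $\mathrm{Sp}_{\varphi \otimes R_{\mathfrak{m}}[X]}(R_{\mathfrak{m}}[X])$; this local normality is the composite of Lemma~\ref{lemma:3.2} (to write $\varphi = \epsilon^t \psi_n \epsilon$ over $R_\mathfrak{m}$ with $\epsilon \in \mathrm{E}_{2n}(R_\mathfrak{m})$), Lemmas~\ref{lemma:3.3}--\ref{lemma:3.4} (to arrange $\epsilon = 1 \perp \epsilon_0$ after absorbing a factor), Lemma~\ref{lemma:3.5} (to conjugate between $\mathrm{Sp}_\varphi$ and $\mathrm{Sp}_{\psi_n}$ and their elementary subgroups), Lemma~\ref{lemma:3.7} (identifying $\mathrm{ESp}_{\psi_n} = \mathrm{ESp}_{2n}$), and Lemma~\ref{lemma:3.1} (Kopeiko's absolute normality for the standard form). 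Hence $h_\mathfrak{m}(X)$ lies in $\mathrm{ESp}_{\varphi \otimes R_{\mathfrak{m}}[X]}(R_{\mathfrak{m}}[X])$ for every $\mathfrak{m}$. The main technical obstacle is the patching step that transports this pointwise information back to a global membership $h(X) \in \mathrm{ESp}_{\varphi \otimes R[X]}(R[X])$. This requires a Quillen-type patching principle adapted to the elementary symplectic group attached to the nonstandard form $\varphi$; the normalization $h(0) = I$ and the polynomial variable $X$ are essential here, as the usual Quillen patching for $\mathrm{E}_n$ does not apply directly to the $\varphi$-elementary generators, which interact nontrivially with the form. Once the patching is secured, the chain of reductions above closes the proof.
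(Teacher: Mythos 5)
Your overall skeleton (conjugate a single generator, pass to the polynomial deformation $h(X)=\delta C_\varphi(Xv)\delta^{-1}$, go local, patch, then use an intersection lemma and evaluate at $X=1$) is the right family of ideas, but as set up the argument loses the ideal $I$ and therefore cannot prove the relative statement. Lemma \ref{lemma:resp=esp cap rsp} only delivers $h(X)\in\mathrm{ESp}_{\varphi\otimes R[X]}(R[X],(X))$, i.e.\ relativity with respect to the ideal $(X)$ of $R[X]$: its generators are $\mathrm{ESp}_{\varphi\otimes R[X]}(R[X])$-conjugates of $C_\varphi(w)$, $R_\varphi(w)$ with $w\in (X)^{2n-1}$. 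The evaluation $X\mapsto 1$ maps $(X)$ onto all of $R$, so from this you can only conclude $\delta C_\varphi(v)\delta^{-1}\in\mathrm{ESp}_\varphi(R)$ --- the hypothesis $v\in I^{2n-1}$ is never used after the first line, and nothing in the chain records it. Thus your route proves at best the absolute normality ($I=R$), not that $\mathrm{ESp}_\varphi(R,I)$ is normal in $\mathrm{Sp}_\varphi(R)$. In addition, the step you yourself flag as the ``main technical obstacle'' (patching the local memberships into a global one for the nonstandard form $\varphi$) is left unproved, so even the absolute case is not closed as written.

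Both defects are repaired by carrying $I$ through the local--global step instead of routing through the $(X)$-relative intersection lemma. Show that for every maximal ideal $\ideal{m}$ one has $h(X)_\ideal{m}\in\mathrm{ESp}_{\varphi\otimes R_\ideal{m}[X]}(R_\ideal{m}[X],I_\ideal{m}[X])$: over $R_\ideal{m}$ write $\varphi=(1\perp\epsilon)^t\psi_n(1\perp\epsilon)$ (Remark \ref{phi over local ring}) and use Kopeiko's relative normality (Lemma \ref{lemma:3.1}) transported by the relative conjugation Lemma \ref{lemma:3.6}; note that the absolute local normality you quote (Lemma \ref{lemma:3.11}) is not enough, precisely because it forgets $I$. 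Then the relative Local--Global principle (Lemma \ref{lemma:LG principle}, whose proof via the Dilation principle, Lemma \ref{lemma:Dilation}, is exactly the patching you were missing) gives $h(X)\in\mathrm{ESp}_{\varphi\otimes R[X]}(R[X],I[X])$, and evaluating at $X=1$ lands in $\mathrm{ESp}_\varphi(R,I)$ because the generators have $w(X)\in I[X]^{2n-1}$, so $w(1)\in I^{2n-1}$. For comparison: the present paper does not prove Theorem \ref{lemma:norm of esp} at all --- it is imported from \cite{Norm} --- but the machinery it develops in Section \ref{section:7} is exactly this relative dilation/local--global route, in which Lemma \ref{lemma:resp=esp cap rsp} plays no role.
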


%***********************************************************************************************************************************************************************

\section{$P[X]$ as a symplectic module}
\label{section:4}	
	
In this section, we will prove that when the projective module is free, the group of isometries equals the symplectic group. We will also prove that $P[X]$ is projective $R[X]$-module if $P$ is a projective $R$-module (see Lemma \ref{lemma:P[X] is proj}).  Finally, we will define a bilinear from $\langle,\rangle_{\otimes R[X]}$ on $P[X]$ (see Lemma \ref{lemma:P[X] is symplectic}) such that $(P[X],\langle,\rangle_{\otimes R[X]})$ is a symplectic module.

\begin{lemma}
\label{lemma:iso gp=sym gp}
	Let $(P,\langle,\rangle)$ be a symplectic $R$-module where $P$ is free of rank $2n$. Suppose $\langle,\rangle$ corresponds to an invertible alternating matrix $\varphi$ of size $2n$. Then,
	\begin{equation*}
		\mathrm{Sp}(P,\langle,\rangle_\varphi)=\mathrm{Sp}_\varphi(R).
	\end{equation*}
\end{lemma}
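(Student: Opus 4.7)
The plan is to identify automorphisms of the free module $P$ with invertible matrices via the chosen basis and then translate the isometry condition into the matrix identity $\alpha^t\varphi\alpha=\varphi$. Concretely, I would fix a basis $\{e_1,\ldots,e_{2n}\}$ of $P$ with respect to which the form $\langle,\rangle$ is represented by the alternating matrix $\varphi$, i.e.\ $\langle p,q\rangle_\varphi=p^t\varphi q$ after identifying $P$ with $R^{2n}$ via coordinates in this basis.

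The first step is to observe that with this identification, $\mathrm{Aut}_R(P)$ is naturally in bijection with $\mathrm{GL}_{2n}(R)$: every $R$-linear automorphism $\sigma:P\rightarrow P$ corresponds to a unique matrix $\alpha\in\mathrm{GL}_{2n}(R)$ via $\sigma(p)=\alpha p$ for $p\in R^{2n}$. Next, I would rewrite the isometry condition. For any $p,q\in P$,
\begin{equation*}
	\langle\sigma(p),\sigma(q)\rangle_\varphi=(\alpha p)^t\varphi(\alpha q)=p^t(\alpha^t\varphi\alpha)q,
\end{equation*}
so $\sigma$ is an isometry if and only if $p^t(\alpha^t\varphi\alpha)q=p^t\varphi q$ for all $p,q\in R^{2n}$.

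The final step is to note that this equality of bilinear forms, evaluated on all pairs of standard basis vectors $e_i,e_j$, gives entrywise equality of the matrices $\alpha^t\varphi\alpha$ and $\varphi$; conversely, the matrix identity clearly implies the equality of forms. Hence $\sigma\in\mathrm{Sp}(P,\langle,\rangle_\varphi)$ if and only if $\alpha\in\mathrm{GL}_{2n}(R)$ satisfies $\alpha^t\varphi\alpha=\varphi$, which is precisely the defining condition of $\mathrm{Sp}_\varphi(R)$. This yields the required equality of groups.

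There is no serious obstacle here: the statement is essentially an unpacking of definitions once the basis is fixed. The only thing to be careful about is making the identification between $R$-linear maps on $P$ and matrices in $\mathrm{M}_{2n}(R)$ explicit, and checking that the evaluation of the bilinear form identity on basis vectors suffices to recover the matrix equality (which is routine since $e_i^tM e_j=M_{ij}$).
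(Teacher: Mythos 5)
Your proposal is correct and follows essentially the same route as the paper: fix the basis, identify automorphisms of $P$ with matrices in $\mathrm{GL}_{2n}(R)$, and show the isometry condition is equivalent to $\alpha^t\varphi\alpha=\varphi$ by equality of the associated bilinear forms. Your remark that evaluating on standard basis vectors recovers the matrix equality just makes explicit the last equivalence that the paper states without comment.
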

\begin{proof}
	As $P$ is free of rank $2n$, we can consider any element $x$ of $P$ as an element of $R^{2n}$ and any homomorphism $\alpha:P\rightarrow P$ as an element of $\mathrm{M}_{2n}(R)$. In such case $\alpha(x)$ can be represented by $\alpha x$ (matrix multiplication). Note that
	\begin{eqnarray*}
		&&\langle\alpha(p),\alpha(q)\rangle_\varphi=\langle p,q\rangle_\varphi \text{ for all } p,q\in P\\
		&\iff&(\alpha p)^t\varphi(\alpha q)=p^t\varphi q \text{ for all } p,q\in P\\
		&\iff&p^t(\alpha^t\varphi\alpha) q=p^t\varphi q \text{ for all } p,q\in P\\
		&\iff&\alpha^t\varphi\alpha=\varphi.
	\end{eqnarray*}
	Therefore $\alpha\in\mathrm{Sp}(P,\langle,\rangle_\varphi)$ if and only if $\alpha\in\mathrm{Sp}_\varphi(R)$ and hence $\mathrm{Sp}(P,\langle,\rangle_\varphi)=\mathrm{Sp}_\varphi(R)$.
\qed
\end{proof}
	
\begin{lemma}
\label{lemma:P[X] is fin gen}
	If $P$ is a finitely generated $R$-module, then $P[X]$ is finitely generated as an $R[X]$ module. 
\end{lemma}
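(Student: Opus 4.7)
The plan is to exhibit an explicit finite generating set for $P[X]$ as an $R[X]$-module, taken directly from the generators of $P$ as an $R$-module.

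First, I would fix a finite $R$-module generating set $p_1, \dots, p_n$ for $P$, whose existence is guaranteed by hypothesis. I then view each $p_i$ as the constant polynomial $p_i \in P[X]$ under the natural inclusion $P \hookrightarrow P[X]$, and claim that these same elements generate $P[X]$ over $R[X]$.

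Next, I would take an arbitrary element $f(X) \in P[X]$ and write it in its canonical form $f(X) = \sum_{k=0}^{d} q_k X^k$ with $q_k \in P$ (this is just the definition of $P[X]$ as the module of polynomials in $X$ with coefficients in $P$). Applying the generating property of the $p_i$, for each index $k$ there exist scalars $a_{k,1}, \dots, a_{k,n} \in R$ such that $q_k = \sum_{j=1}^{n} a_{k,j} p_j$. Substituting and swapping the order of summation yields
\begin{equation*}
f(X) \;=\; \sum_{k=0}^{d} \Big( \sum_{j=1}^{n} a_{k,j} p_j \Big) X^k \;=\; \sum_{j=1}^{n} \Big( \sum_{k=0}^{d} a_{k,j} X^k \Big) p_j,
\end{equation*}
which exhibits $f(X)$ as an $R[X]$-linear combination of $p_1, \dots, p_n$, with coefficients $g_j(X) := \sum_{k=0}^{d} a_{k,j} X^k \in R[X]$.

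This argument is essentially bookkeeping, so I do not expect any substantive obstacle; the only point worth flagging is that the scalar manipulations implicitly use the $R[X]$-module structure on $P[X]$ (namely, that multiplication by $X$ and by elements of $R$ commute with the $P$-coefficients), which is built into the definition of $P[X] = P \otimes_R R[X]$. Hence $\{p_1, \dots, p_n\}$ is a finite $R[X]$-generating set for $P[X]$, completing the proof.
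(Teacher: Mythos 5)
Your proposal is correct and is essentially the same argument as the paper's: fix generators $p_1,\dots,p_n$ of $P$ over $R$, expand the coefficients of an arbitrary $q(X)\in P[X]$ in terms of the $p_j$, and regroup to exhibit $q(X)$ as an $R[X]$-combination of the $p_j$. No meaningful differences to report.
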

\begin{proof}
	Suppose $\{p_1,p_2,\ldots,p_n\}$ is a finite generating set for $P$ as an $R$-module. Let $q(X)=\sum_i q_iX^i$ be an element of $P[X]$. As $P$ is generated by $\{p_1,p_2,\ldots,p_n\}$, for each $i$, we have $q_i=\sum_j a_{ij}p_j$ for some $a_{ij}\in R$. Then, $q(X)=\sum_j a_jp_j$, where $a_j=\sum_i a_{ij}X^i\in R[X]$ for $j=1,2,\ldots,n$. Therefore, $P[X]$ is generated by $\{p_1,p_2,\ldots,p_n\}$ as an $R[X]$-module and hence $P[X]$ is a finitely generated $R[X]$ module.
\qed 
\end{proof}
	
\begin{lemma}
\label{lemma:P[X] is proj}
	If $P$ is $R$-projective, then $P[X]$ is $R[X]$-projective.    	
\end{lemma}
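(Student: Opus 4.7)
The plan is to use the characterization of projective modules given in Definition \ref{projective defn}: an $R$-module is projective if and only if it is a direct summand of a free $R$-module. So I would start by choosing an $R$-module $Q$ such that $P\oplus Q=F$, where $F$ is a free $R$-module with some basis $\{e_i\}_{i\in I}$.

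Next, I would observe that forming polynomials is compatible with direct sums and with freeness. Concretely, one has a canonical identification $P[X]\cong P\otimes_R R[X]$ (and similarly for $Q$ and $F$), and since tensoring with $R[X]$ commutes with finite direct sums, the decomposition $P\oplus Q=F$ yields $P[X]\oplus Q[X]=F[X]$. It then suffices to check that $F[X]$ is free as an $R[X]$-module. This is immediate from the basis $\{e_i\}_{i\in I}$: every element of $F[X]$ is a polynomial $\sum_k f_kX^k$ with $f_k\in F$, each $f_k$ expands uniquely in the basis $\{e_i\}$, and collecting coefficients shows that $\{e_i\otimes 1\}_{i\in I}$ is an $R[X]$-basis for $F[X]$.

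Combining these two facts, $P[X]$ is exhibited as a direct summand of the free $R[X]$-module $F[X]$, so by Definition \ref{projective defn} it is $R[X]$-projective.

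I do not anticipate a serious obstacle; the only point requiring a tiny bit of care is making sure to use the direct-summand criterion (which works for arbitrary, not necessarily finitely generated, $P$), and to justify that the ``polynomial'' functor $(-)[X]$ preserves direct sums and sends a free $R$-module to a free $R[X]$-module. Both of these are completely formal once one writes $(-)[X]$ as $(-)\otimes_R R[X]$, so the proof should be very short.
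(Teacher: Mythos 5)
Your argument is correct and coincides with the paper's own proof: both exhibit $P$ as a direct summand of a free module $F=P\oplus Q$, pass to $F[X]=P[X]\oplus Q[X]$, note that $F[X]$ is $R[X]$-free, and invoke the direct-summand criterion of Definition \ref{projective defn}. Your extra remarks via $(-)\otimes_R R[X]$ merely justify steps the paper states directly.
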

\begin{proof}
	Suppose $P$ is a projective $R$-module. Then, by the equivalent condition in Definition \ref{projective defn}, there exist $R$-modules $\mathcal{F}$ and $Q$ such that $\mathcal{F}$ is free and $\mathcal{F}=P\oplus Q$. Then, $\mathcal{F}[X]= P[X]\oplus Q[X]$ and $\mathcal{F}[X]$ is a free $R[X]$-module. So, $P[X]$ is a direct summand of a free $R[X]$-module and hence is $R[X]$-projective by the equivalent condition in Definition \ref{projective defn}.
\qed
\end{proof}
	
\begin{remark}
\label{remark:surjetive}
	Suppose $\psi: P[X]\rightarrow R[X]$ is an $R[X]$-module homomorphism. For each $j\in \N$, define $\psi_j:P\rightarrow R$ by taking $\psi_j(q)$ to be the coefficient of $X^j$ in $\psi(q)$. Then each $\psi_j$ is an $R$-module homomorphism and for each $q\in P$, we have $\psi_j(q)=0$ for all but finitely many $j$. For $q(X)=\sum_i q_i X^i$, we have 
	\begin{equation*}
		\psi(q(X))=\sum_i\psi(q_i)X^i=\sum_i\bigg(\sum_j\psi_j(q_i)X^j\bigg)X^i=\sum_{i,j}\psi_j(q_i)X^{i+j}.
	\end{equation*}
	In addition, if $P$ is finitely generated, then there exists $n_0\in\N$ such that $\psi_j\equiv 0$ for all $j> n_0$. (If $P$ is generated by $\{p_1,p_2,\ldots,p_n\}$, then take $n_0=\mathrm{max}\{\mathrm{deg}(\psi(p_i))\}$.) 
\end{remark}
	
\begin{lemma}
\label{lemma:P[X] is symplectic}
	Let $(P,\langle,\rangle)$ be a symplectic $R$-module with $P$ finitely generated projective module of even rank. Define $\langle,\rangle_{\otimes R[X]}:P[X]\times P[X]\rightarrow R[X]$ as
	\begin{equation*}
		\bigg\langle\sum_ip_iX^i,\sum_jq_jX^j\bigg\rangle_{\otimes R[X]}=\sum_{i,j}\langle p,q\rangle X^{i+j}.
	\end{equation*}
	Then, $\langle,\rangle_{\otimes R[X]}$ is a nondegenerate alternating bilinear form on $P[X]$.
\end{lemma}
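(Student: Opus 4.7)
The proof splits naturally into three assertions: $R[X]$-bilinearity, the alternating property, and nondegeneracy. The first two are routine, while nondegeneracy is the main point and will rely on Remark \ref{remark:surjetive} together with the hypothesis that $P$ is finitely generated.

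For bilinearity, given $a(X) = \sum_k a_k X^k \in R[X]$, $p(X) = \sum_i p_i X^i$ and $q(X) = \sum_j q_j X^j$ in $P[X]$, I would expand $\langle a(X) p(X), q(X)\rangle_{\otimes R[X]} = \sum_{i,j,k} a_k \langle p_i, q_j\rangle X^{i+j+k}$ using the defining formula and pull out $a(X)$; additivity and $R[X]$-linearity in the second slot are handled identically. For the alternating property, compute $\langle p(X), p(X)\rangle_{\otimes R[X]} = \sum_{i,j} \langle p_i, p_j\rangle X^{i+j}$, observe that the diagonal terms $i=j$ vanish because $\langle,\rangle$ is alternating on $P$, and note that the off-diagonal pairs $\{(i,j),(j,i)\}$ cancel in pairs because alternating forms are automatically skew-symmetric.

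For nondegeneracy, let $\Phi \colon P \to P^{\ast}$ denote the isomorphism $p \mapsto \langle p, -\rangle$ coming from the nondegeneracy of $\langle,\rangle$ on $P$, and let $\tilde\Phi \colon P[X] \to P[X]^{\ast}$ be the map induced by $\langle,\rangle_{\otimes R[X]}$. To prove $\tilde\Phi$ is surjective, pick any $\psi \in P[X]^{\ast}$ and apply Remark \ref{remark:surjetive}: since $P$ is finitely generated, there exists $n_0 \in \N$ such that the associated homomorphisms $\psi_j \in P^{\ast}$ vanish for $j > n_0$, and $\psi(q(X)) = \sum_{i,j} \psi_j(q_i) X^{i+j}$. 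Setting $p_j := \Phi^{-1}(\psi_j)$ for $j \le n_0$ and $p(X) := \sum_{j \le n_0} p_j X^j \in P[X]$, a direct computation yields
\[
\tilde\Phi(p(X))(q(X)) = \sum_{i,j} \langle p_j, q_i\rangle X^{i+j} = \sum_{i,j} \psi_j(q_i) X^{i+j} = \psi(q(X)).
\]
For injectivity, if $\tilde\Phi(p(X)) = 0$ with $p(X) = \sum_j p_j X^j$, then evaluating against each $q \in P \subseteq P[X]$ gives $\sum_j \langle p_j, q\rangle X^j = 0$, so $\langle p_j, q\rangle = 0$ for every $j$ and every $q \in P$, forcing $p_j = 0$ by the nondegeneracy of $\langle,\rangle$ on $P$.

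The main subtlety, and the only place where a hypothesis really does work, is the surjectivity step: finite generation of $P$ is exactly what guarantees that the sequence $(\psi_j)$ has bounded support, so that the candidate preimage $\sum_j p_j X^j$ is a genuine element of $P[X]$ rather than a formal power series in $P[[X]]$. This is what the last sentence of Remark \ref{remark:surjetive} supplies, and it is the step I would be most careful about when writing the proof in detail.
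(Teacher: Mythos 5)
Your proposal is correct and follows essentially the same route as the paper: routine verification of bilinearity, pairwise cancellation via skew-symmetry for the alternating property, injectivity by evaluating against constant polynomials $q \in P$, and surjectivity by invoking Remark \ref{remark:surjetive} to bound the support of the $\psi_j$ and lifting each $\psi_j$ through the isomorphism $P \cong P^{\ast}$. Your explicit separation of the diagonal terms $\langle p_i,p_i\rangle$ (which need the alternating hypothesis, not just skew-symmetry) is slightly more careful than the paper's phrasing, but it is the same argument.
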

\begin{proof}
	Note that $\langle,\rangle_{\otimes R[X]}$ is a bilinear form as $\langle,\rangle$ is bilinear. For $p(X)=\sum p_iX^i\in P[X]$, we have 
	\begin{equation*}
		\bigg\langle p(X), p(X)\bigg\rangle_{\otimes R[X]}=\sum_j\bigg(\sum_i \langle p_i,p_{j-i}\rangle X^j\bigg).
	\end{equation*}
	The bilinear form $\langle,\rangle$ is alternating and hence is skew-symmetric. This implies that $\langle p_i,p_{j-i}\rangle+\langle p_{j-i},p_i\rangle=0$ for all $i$, $j$. Therefore, $\langle p(X), p(X)\rangle_{\otimes R[X]}=0$, for all $p(X)\in P[X]$ and hence $\langle,\rangle_{\otimes R[X]}$ is alternating.
		
	The map $\varphi:P\rightarrow P^\ast$ given by $\varphi(p)=\langle p,-\rangle$ is an isomorphism as $\langle,\rangle$ is nondegenerate. Define $\varphi:P[X]\rightarrow P[X]^\ast$ as $\varphi(p(X))=\langle p(X),- \rangle_{\otimes R[X]}$. Then $\varphi$ is an $R[X]$-module homomorphism as $\langle,\rangle_{\otimes R[X]}$ is bilinear. Suppose $p(X)=\sum p_iX^i\in P[X]$ is such that $\varphi(p(X))=0$. Then, we have $\langle p(X), q \rangle_{\otimes R[X]} =0$ for all $q\in P$, that is, $\sum\langle p_i, q\rangle X^i=0$ for all $q\in P$. This means that $\varphi(p_i)=0$ for all $i$. As $\varphi$ is injective, we have $p_i=0$ for all $i$. Therefore, $p(X)=0$ and hence $\varphi$ is injective.
		
	Let $\psi:P[X]\rightarrow R[X]$ be an $R[X]$-module homomorphism. For $j\in\N$, let $\psi_j$s be defined as in remark \ref{remark:surjetive}. Then we have 
	\begin{equation*} 
		\psi\bigg(\sum_i q_iX^i\bigg)=\sum_{i,j}\psi_j(q_i)X^{i+j}.
	\end{equation*}
	As $\varphi$ is surjective and $\psi_j\in P^\ast$ for $i=1,2,\ldots,n_0$, we have $\psi_j=\varphi(p_j)=\langle p_j,-\rangle$ for some $p_j\in P$. Then, 
	\begin{equation*} 
		\psi\bigg(\sum_i q_iX^i\bigg)=\sum_{i,j}\langle p_j,q_i\rangle X^{i+j}
			=\bigg\langle \sum_jp_jX^j,\sum_i q_iX^i\bigg\rangle_{\otimes R[X]}.
	\end{equation*} 
	Therefore, for every $\psi\in P[X]^\ast$, there exists $p(X)\in P[X]$ such that $\psi=\langle p(X),-\rangle_{\otimes R[X]}=\varphi(p(X))$ and hence $\varphi$ is surjective. This proves that $\langle,\rangle_{\otimes R[X]}$ is nondegenerate.
\qed          
\end{proof}
	
\medskip
	
\begin{remark}
	By Lemma \ref{lemma:P[X] is fin gen} and Lemma \ref{lemma:P[X] is proj}, we have $P[X]$ is a finitely generated projective module. The map $\langle,\rangle_{\otimes R[X]}$ defined in Lemma \ref{lemma:P[X] is symplectic} is a nondegenerate alternating bilinear form on $P[X]$. So, $(P[X],\langle,\rangle_{\otimes R[X]})$ is a symplectic $R[X]$-module.
\end{remark}

%***********************************************************************************************************************************************************************

\section{$V(P,\langle,\rangle)$ and $V(P,IP,\langle,\rangle)$}
\label{section:5}
	
In this section, we define a group which is considered as generalization of $\mathrm{ESp}_\varphi(R)$ in the case of projective modules.
	
\medskip
\noindent
\textbf{Notation:} Suppose $P$ is a free $R$-module and $\varphi$ is an invertible alternating matrix of size $2n$ over $R$. We say an $R$-module homomorphism $\alpha:P\rightarrow P$ is an element of $\mathrm{ESp}_\varphi(R)$ if the matrix of $\alpha$, in some basis of $P$, is an element of $\mathrm{ESp}_\varphi(R)$. 
	
\begin{lemma}
\label{lemma:local basis independence}
	Let $(R,\ideal{m})$ be a local ring and $P$ be a free $R$-module of rank $2n$. Suppose $\varphi_1$ and $\varphi_2$ are two alternating matrices of Pfaffian $1$ of size $2n$ over $R$. Let $\alpha:P\rightarrow P$ be an $R$-module homomorphism. Then, $\alpha\in\mathrm{ESp}_{\varphi_1}(R)\text{ if and only if } \alpha\in\mathrm{ESp}_{\varphi_2}(R)$.
\end{lemma}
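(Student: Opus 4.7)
The plan is to reduce both $\varphi_1$ and $\varphi_2$ to the standard symplectic matrix $\psi_n$ using the fact that $R$ is local, and then to compare $\mathrm{ESp}_{\varphi_1}(R)$ and $\mathrm{ESp}_{\varphi_2}(R)$ as subgroups of $\mathrm{GL}_{2n}(R)$ via Lemma \ref{lemma:3.5}. Since the notion ``$\alpha\in\mathrm{ESp}_\varphi(R)$'' for a module homomorphism $\alpha\colon P\to P$ only requires existence of \emph{some} basis in which the matrix of $\alpha$ lands in $\mathrm{ESp}_\varphi(R)$, any difference coming from conjugation by an invertible matrix can be absorbed into a change of basis.

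First I would apply Lemma \ref{lemma:3.2} to write $\varphi_i=\epsilon_i^{t}\psi_n\epsilon_i$ for some $\epsilon_i\in\mathrm{E}_{2n}(R)$, $i=1,2$, and then use Lemma \ref{lemma:3.4} to replace each $\epsilon_i$ by an element of the form $1\perp\eta_i$ with $\eta_i\in\mathrm{E}_{2n-1}(R)$, so that
\begin{equation*}
    \varphi_i=(1\perp\eta_i)^{t}\,\psi_n\,(1\perp\eta_i),\qquad i=1,2.
\end{equation*}
Eliminating $\psi_n$ between these two identities and using $(1\perp A)(1\perp B)=1\perp AB$, a direct calculation gives
\begin{equation*}
    \varphi_1=(1\perp\epsilon)^{t}\,\varphi_2\,(1\perp\epsilon),\qquad \epsilon:=\eta_2^{-1}\eta_1\in\mathrm{E}_{2n-1}(R).
\end{equation*}
Lemma \ref{lemma:3.5} then yields the equality of subgroups of $\mathrm{GL}_{2n}(R)$
\begin{equation*}
    \mathrm{ESp}_{\varphi_1}(R)=(1\perp\epsilon)^{-1}\,\mathrm{ESp}_{\varphi_2}(R)\,(1\perp\epsilon).
\end{equation*}

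To finish, suppose $\alpha\in\mathrm{ESp}_{\varphi_1}(R)$ in the sense of the notation preceding the lemma: there is a basis $\mathcal{B}$ of $P$ in which $[\alpha]_\mathcal{B}\in\mathrm{ESp}_{\varphi_1}(R)$. By the boxed identity, $(1\perp\epsilon)\,[\alpha]_\mathcal{B}\,(1\perp\epsilon)^{-1}$ lies in $\mathrm{ESp}_{\varphi_2}(R)$. Letting $\mathcal{B}'$ be the basis of $P$ obtained from $\mathcal{B}$ via the change-of-basis matrix $(1\perp\epsilon)^{-1}$, the matrix of $\alpha$ in $\mathcal{B}'$ is exactly this conjugate, so $\alpha\in\mathrm{ESp}_{\varphi_2}(R)$. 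The converse is obtained by interchanging the roles of $\varphi_1$ and $\varphi_2$.

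The only mildly delicate point, and the step I would check carefully, is the final basis-change bookkeeping: one must be sure that conjugation by the elementary matrix $1\perp\epsilon$ on the matrix side really is realized by a change of basis on the module side, so that the existential ``some basis'' clause in the notation is legitimately used. Everything else is an assembly of the already-proven Lemmas \ref{lemma:3.2}, \ref{lemma:3.4}, and \ref{lemma:3.5}; no new normality input is needed because we are only comparing the two elementary symplectic groups inside $\mathrm{GL}_{2n}(R)$ up to change of basis, not inside $\mathrm{Sp}_{\varphi_2}(R)$.
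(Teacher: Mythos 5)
Your proposal is correct and follows essentially the same route as the paper: reduce both $\varphi_1$ and $\varphi_2$ to $\psi_n$ via Lemmas \ref{lemma:3.2} and \ref{lemma:3.4}, obtain $\varphi_1=(1\perp\epsilon)^t\varphi_2(1\perp\epsilon)$ with $\epsilon=\epsilon_2^{-1}\epsilon_1$, apply Lemma \ref{lemma:3.5}, and absorb the conjugation by $1\perp\epsilon$ into a change of basis of $P$. The basis-change bookkeeping you flag is exactly how the paper concludes, so no further input is needed.
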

\begin{proof}
	As $\varphi_1$ and $\varphi_2$ are two alternating matrices of Pfaffian $1$ over $R$, we have $\varphi_1=(1\perp\epsilon_1)^t\psi_n(1\perp\epsilon_1)$ and $\varphi_2=(1\perp\epsilon_2)^t\psi_n(1\perp\epsilon_2)$ for some $\epsilon_1,\epsilon_2\in\mathrm{E}_{2n-1}(R)$, by Lemma \ref{lemma:3.2} and Lemma \ref{lemma:3.4}. Then, $\varphi_1=(1\perp\epsilon)^t\varphi_2(1\perp\epsilon)$, where $\epsilon=\epsilon_2^{-1}\epsilon_1$. By Lemma \ref{lemma:3.5}, we have $\mathrm{ESp}_{\varphi_1}(R)=(1\perp\epsilon)^{-1}\mathrm{ESp}_{\varphi_2}(R)(1\perp\epsilon)$. Let $\mathcal{B}_1$ be a basis of $P$ over $R$. Construct a basis $\mathcal{B}_2$ of $P$ such that the change of basis matrix from $\mathcal{B}_1$ to $\mathcal{B}_2$ is $(1\perp\epsilon)$.
		
	Consider an R-module homomorphism $\alpha:P\rightarrow P$ and let $[\alpha]_1$ and $[\alpha]_2$ denote the matrices of $\alpha$ in the bases $\mathcal{B}_1$ and $\mathcal{B}_2$ respectively. Then, $[\alpha]_1=(1\perp\epsilon)^{-1}[\alpha]_2(1\perp\epsilon)$. So,
	\begin{eqnarray*}
		&&[\alpha]_1\in\mathrm{ESp}_{\varphi_1}(R)\\
		&\iff &(1\perp\epsilon)^{-1}[\alpha]_2(1\perp\epsilon) \in\mathrm{ESp}_{\varphi_1}(R)\\
		&\iff &[\alpha]_2\in(1\perp\epsilon)\mathrm{ESp}_{\varphi_1}(R)(1\perp\epsilon)^{-1} \\
		&\iff &[\alpha]_2\in\mathrm{ESp}_{\varphi_2}(R).
	\end{eqnarray*}
	So, $\alpha\in\mathrm{ESp}_{\varphi_1}(R)\text{ if and only if } \alpha\in\mathrm{ESp}_{\varphi_2}(R)$.
\qed   
\end{proof}
	
\medskip
	
The following lemma (Lemma \ref{lemma:local basis independence relative}) gives a relative version of Lemma \ref{lemma:local basis independence} and can be proved in a similar way as Lemma \ref{lemma:local basis independence} using Lemma \ref{lemma:3.6}.
	
\begin{lemma}
\label{lemma:local basis independence relative}
	Let $(R,\ideal{m})$ be a local ring and $P$ be a free $R$-module of rank $2n$. Let $I$ be an ideal of $R$. Suppose $\varphi_1$ and $\varphi_2$ are two alternating matrices of Pfaffian $1$ of size $2n$ over $R$. Let $\alpha:P\rightarrow P$ be an $R$-module homomorphism. Then, $\alpha\in\mathrm{ESp}_{\varphi_1}(R,I)\text{ if and only if } \alpha\in\mathrm{ESp}_{\varphi_2}(R,I)$.
\end{lemma}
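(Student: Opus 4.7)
The plan is to mimic the proof of Lemma \ref{lemma:local basis independence} essentially verbatim, with Lemma \ref{lemma:3.6} playing the role that Lemma \ref{lemma:3.5} played there. Since $R$ is local and $\varphi_1, \varphi_2$ are invertible alternating matrices of Pfaffian $1$ of size $2n$, Lemma \ref{lemma:3.2} together with Lemma \ref{lemma:3.4} furnishes $\epsilon_1, \epsilon_2 \in \mathrm{E}_{2n-1}(R)$ such that $\varphi_i = (1 \perp \epsilon_i)^t \psi_n (1 \perp \epsilon_i)$ for $i = 1, 2$. Setting $\epsilon := \epsilon_2^{-1}\epsilon_1$ then produces the bridging relation $\varphi_1 = (1 \perp \epsilon)^t \varphi_2 (1 \perp \epsilon)$.

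Invoking Lemma \ref{lemma:3.6} with this $\epsilon$ yields the group-level identity
\begin{equation*}
\mathrm{ESp}_{\varphi_1}(R, I) = (1 \perp \epsilon)^{-1}\,\mathrm{ESp}_{\varphi_2}(R, I)\,(1 \perp \epsilon).
\end{equation*}
Next, I would fix any basis $\mathcal{B}_1$ of $P$ and construct $\mathcal{B}_2$ as the basis whose change-of-basis matrix from $\mathcal{B}_1$ equals $(1 \perp \epsilon)$. Writing $[\alpha]_i$ for the matrix of $\alpha$ in $\mathcal{B}_i$, the standard change-of-basis law reads $[\alpha]_1 = (1 \perp \epsilon)^{-1}[\alpha]_2(1 \perp \epsilon)$. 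Combining this with the group-level identity above gives the chain
\begin{equation*}
[\alpha]_1 \in \mathrm{ESp}_{\varphi_1}(R, I) \iff [\alpha]_2 \in (1\perp\epsilon)\,\mathrm{ESp}_{\varphi_1}(R, I)\,(1\perp\epsilon)^{-1} \iff [\alpha]_2 \in \mathrm{ESp}_{\varphi_2}(R, I),
\end{equation*}
which is precisely the analogue of the closing display in the proof of Lemma \ref{lemma:local basis independence} and yields the desired equivalence.

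The step where genuine attention is needed, and which I expect to be the main obstacle, is verifying the hypothesis of Lemma \ref{lemma:3.6}: the stated version requires the conjugating element to lie in the relative group $\mathrm{E}_{2n-1}(R, I)$, whereas the $\epsilon$ produced by Lemmas \ref{lemma:3.2} and \ref{lemma:3.4} lives a priori only in $\mathrm{E}_{2n-1}(R)$. Once this compatibility is arranged, everything else is a purely formal transcription of the argument in Lemma \ref{lemma:local basis independence}, so no new technical ingredient is required beyond the relative analogue of the transport identity.
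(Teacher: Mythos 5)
Your proposal follows exactly the route the paper intends: the paper's own justification of this lemma is a single sentence saying it is proved in the same way as Lemma \ref{lemma:local basis independence}, with Lemma \ref{lemma:3.6} in place of Lemma \ref{lemma:3.5}, and your transcription (produce $\epsilon_1,\epsilon_2$ via Lemma \ref{lemma:3.2} and Lemma \ref{lemma:3.4}, set $\epsilon=\epsilon_2^{-1}\epsilon_1$, transport the relative groups, then change basis by $1\perp\epsilon$) is precisely that argument. The one step you leave open --- that Lemma \ref{lemma:3.6} as stated asks for $\epsilon\in\mathrm{E}_{2n-1}(R,I)$ while your $\epsilon$ lies only in $\mathrm{E}_{2n-1}(R)$ --- is a genuine mismatch with the literal statement, but it does not obstruct the proof, and you should close it rather than flag it as an expected obstacle. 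The transport identity $\mathrm{ESp}_{\varphi_1}(R,I)=(1\perp\epsilon)^{-1}\mathrm{ESp}_{\varphi_2}(R,I)(1\perp\epsilon)$ in fact holds for any $\epsilon\in\mathrm{E}_{2n-1}(R)$ with $\varphi_1=(1\perp\epsilon)^t\varphi_2(1\perp\epsilon)$: by Lemma \ref{lemma:3.5}, conjugation by $(1\perp\epsilon)$ carries $\mathrm{ESp}_{\varphi_2}(R)$ onto $\mathrm{ESp}_{\varphi_1}(R)$, and by the computation recorded in Remark \ref{ESp phi Esp phi*} it sends the generators $C_{\varphi_2}(v)$, $R_{\varphi_2}(v)$ of $\mathrm{ESp}_{\varphi_2}(I)$ to $C_{\varphi_1}(\epsilon^{-1}v)$ and $R_{\varphi_1}(\epsilon^t v)$; for $v\in I^{2n-1}$ the vectors $\epsilon^{-1}v$ and $\epsilon^t v$ again lie in $I^{2n-1}$, and since $v\mapsto\epsilon^{-1}v$ and $v\mapsto\epsilon^t v$ are bijections of $I^{2n-1}$, conjugation maps $\mathrm{ESp}_{\varphi_2}(I)$ onto $\mathrm{ESp}_{\varphi_1}(I)$, hence carries the normal closure of $\mathrm{ESp}_{\varphi_2}(I)$ in $\mathrm{ESp}_{\varphi_2}(R)$ onto the normal closure of $\mathrm{ESp}_{\varphi_1}(I)$ in $\mathrm{ESp}_{\varphi_1}(R)$, which is the identity you need. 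With this short verification inserted (equivalently, reading Lemma \ref{lemma:3.6} with the hypothesis $\epsilon\in\mathrm{E}_{2n-1}(R)$, which is how the paper uses it here), the remaining change-of-basis chain of equivalences in your write-up is correct and coincides with the paper's proof.
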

	
\medskip
	
\noindent
\textbf{Notation:} Consider an $R[X]-$module homomorphism $\alpha[X]: P[X]\rightarrow P[X]$. For $a\in R$, define $f_a:P[X]\rightarrow P$ as $f_a(p(X))=p(a)$. Then, we denote by $\alpha(a)$ the map $\alpha(a): P\rightarrow P$ given by $\alpha(a):=f_a\circ\alpha(X)|_P$. 
	
\begin{definition} % Vaserstein group
	\label{V(P) defn}
	\rm{Let $(P,\langle,\rangle)$ be a symplectic $R$-module with $P$ finitely generated projective module of even rank. We define $V(P,\langle,\rangle)$ to be the collection
	\begin{equation*}
		\begin{aligned}
		\{\alpha(1)\;\big|\;\alpha(X)\in\mathrm{Sp}(P[X],\langle,\rangle_{\otimes R[X]}),\; \alpha(0)=Id., \text{ and } \\
		\alpha(X)_\ideal{p}\in\mathrm{ESp}_{\varphi\otimes R_\ideal{p}[X]}(R_\ideal{p}[X]), \text{ for all }\ideal{p}\in Spec(R)\}
		\end{aligned}
	\end{equation*}
	where $\langle,\rangle$ corresponds to an alternating matrix $\varphi$ (with respect to some basis) of Pfaffian $1$ over the local ring $R_\ideal{p}$ at the prime ideal $\ideal{p}$. This definition is independent of the choice of local basis in view of Lemma \ref{lemma:local basis independence}.}
\end{definition} % Vaserstein group
	
\begin{definition} % Relative Vaserstein group
	\label{V(P,IP) defn}
	\rm{Let $(P,\langle,\rangle)$ be a symplectic $R$-module with $P$ finitely generated projective module of even rank. Let $I$ be an ideal of $R$. We define $V(P,IP,\langle,\rangle)$ to be the collection
	\begin{equation*}
		\begin{aligned}
		\{\alpha(1)\;\big|\;\alpha(X)\in\mathrm{Sp}(P[X],\langle,\rangle_{\otimes R[X]}),\; \alpha(0)=Id., \text{ and } \\
		\alpha(X)_\ideal{p}\in\mathrm{ESp}_{\varphi\otimes R_\ideal{p}[X]}(R_\ideal{p}[X],I_\ideal{p}[X]), \text{ for all }\ideal{p}\in Spec(R)\}
		\end{aligned}
	\end{equation*}
	where $\langle,\rangle$ corresponds to an alternating matrix $\varphi$ (with respect to some basis) of Pfaffian $1$ over the local ring $R_\ideal{p}$ at the prime ideal $\ideal{p}$. This definition is independent of the choice of local basis in view of Lemma \ref{lemma:local basis independence relative}.}
\end{definition} % Relative Vaserstein group
	
\begin{remark}
	If $I=R$, then $V(P,IP,\langle,\rangle)=V(P,\langle,\rangle)$ as $\mathrm{ESp}_\varphi(R,R)=\mathrm{ESp}_\varphi(R)$ for an invertible skew-symmetric matrix $\varphi$ over $R$. So, any results that hold for $V(P,IP,\langle,\rangle)$ will also hold for $V(P,\langle,\rangle)$.
\end{remark}
	
\medskip

\noindent
\textbf{Notation:} Let $p(X), q(X)\in P[X]$ and $a\in R$. Let $r(X):=\langle p(X), q(X)\rangle_{\otimes R[X]}\in R[X]$. Then $\langle p(X), q(X)\rangle_{\otimes R[X]}(a):=r(a)$.
	
\begin{remark}
	Let $p(X),q(X)\in P[X]$ and $a\in R$. Then, by definition and bilinearity of $\langle,\rangle_{\otimes R[X]}$, we have 			$\langle p(X),q(X)\rangle_{\otimes R[X]}(a)=\langle p(a),q(a)\rangle$. 
	If $\alpha(X):P[X]\rightarrow P[X]$ is an $R[X]$ module homomorphism, and $p,q\in P$, then 
	\begin{equation*}
		\langle \alpha(a)(p),\alpha(a)(q)\rangle=\langle \alpha(X)(p),\alpha(X)(q)\rangle_{\otimes R[X]}(a).
	\end{equation*}
\end{remark}
	
\begin{lemma}
\label{lemma:V is subgp of Sp}
	Let $(P,\langle,\rangle)$ be a symplectic $R$-module with $P$ finitely generated $R$-module of rank $2n$. Then, $V(P,\langle,\rangle)$ is a subgroup of the group of isometries of $P$.
\end{lemma}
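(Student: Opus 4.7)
The plan is to verify, one by one, that $V(P,\langle,\rangle)$ is closed under the group operations inherited from $\mathrm{Sp}(P,\langle,\rangle)$, and separately that each of its elements is indeed an isometry of $P$. The argument essentially reduces to two observations: both $\mathrm{Sp}(P[X],\langle,\rangle_{\otimes R[X]})$ and each of the local groups $\mathrm{ESp}_{\varphi\otimes R_\ideal{p}[X]}(R_\ideal{p}[X])$ are themselves groups, and specialisation at $X=0,1$ via the ring homomorphisms $R[X]\to R$ is functorial.

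First I would check that every $\beta=\alpha(1)\in V(P,\langle,\rangle)$ preserves the form. Using the remark immediately preceding the lemma, for $p,q\in P$,
\[
\langle\alpha(1)(p),\alpha(1)(q)\rangle=\langle\alpha(X)(p),\alpha(X)(q)\rangle_{\otimes R[X]}(1),
\]
and since $\alpha(X)\in\mathrm{Sp}(P[X],\langle,\rangle_{\otimes R[X]})$ the right-hand side equals $\langle p,q\rangle_{\otimes R[X]}(1)=\langle p,q\rangle$. That $\alpha(1)$ is in fact an automorphism of $P$ (and not merely an endomorphism) follows from the functoriality of the base change $R[X]\to R$, $X\mapsto 1$: automorphisms of $P[X]$ descend to automorphisms of $P$.

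For the group axioms I would exhibit explicit witnesses. The identity $\mathrm{Id}_P$ is witnessed by $\alpha(X)=\mathrm{Id}_{P[X]}$, which trivially satisfies every clause of Definition \ref{V(P) defn}. For a product, given $\beta_i=\alpha_i(1)\in V(P,\langle,\rangle)$ with witnesses $\alpha_i(X)$, $i=1,2$, set $\gamma(X):=\alpha_1(X)\alpha_2(X)$; then $\gamma(0)=\mathrm{Id}$, $\gamma(X)\in\mathrm{Sp}(P[X],\langle,\rangle_{\otimes R[X]})$, and after fixing a local basis at each prime $\ideal{p}$ we get $\gamma(X)_\ideal{p}\in\mathrm{ESp}_{\varphi\otimes R_\ideal{p}[X]}(R_\ideal{p}[X])$ because this group is closed under products. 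Specialising at $X=1$ gives $\gamma(1)=\beta_1\beta_2\in V(P,\langle,\rangle)$. For inverses, given $\beta=\alpha(1)$ with witness $\alpha(X)$, take $\delta(X):=\alpha(X)^{-1}$, which lies in $\mathrm{Sp}(P[X],\langle,\rangle_{\otimes R[X]})$ and each of whose localisations $\delta(X)_\ideal{p}=\alpha(X)_\ideal{p}^{-1}$ lies in $\mathrm{ESp}_{\varphi\otimes R_\ideal{p}[X]}(R_\ideal{p}[X])$, again because these are groups. Since $\delta(0)=\alpha(0)^{-1}=\mathrm{Id}$, $\delta$ is a legitimate witness for $\delta(1)=\beta^{-1}$.

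No substantial obstacle is anticipated. The only delicate point is that the local elementary-symplectic membership should not depend on which local basis one uses to realize $P_\ideal{p}$ as a free module, but this is precisely the content of Lemma \ref{lemma:local basis independence}, which also underpins the well-posedness of Definition \ref{V(P) defn}. In particular, none of the deeper normality results from Section \ref{section:3} is invoked at this stage; they enter only in the subsequent structural properties of $V(P,\langle,\rangle)$.
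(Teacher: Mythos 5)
Your proposal is correct, and its core step coincides with the paper's: the preservation of the form is checked exactly as in the paper, via the remark $\langle\alpha(1)(p),\alpha(1)(q)\rangle=\langle\alpha(X)(p),\alpha(X)(q)\rangle_{\otimes R[X]}(1)$ together with $\alpha(X)$ being an isometry of $P[X]$. The differences are two. First, for bijectivity of $\alpha(1)$ the paper localises: from $\alpha(X)_\ideal{p}\in\mathrm{ESp}_{\varphi\otimes R_\ideal{p}[X]}(R_\ideal{p}[X])$ it deduces $\alpha(1)_\ideal{p}\in\mathrm{ESp}_{\varphi\otimes R_\ideal{p}}(R_\ideal{p})$ for every prime $\ideal{p}$, hence $\alpha(1)$ is locally, and therefore globally, an isomorphism; you instead observe that $\alpha(1)$ is the base change of the automorphism $\alpha(X)$ along $R[X]\to R$, $X\mapsto 1$, which is a cleaner and equally valid route (it uses only that evaluation at $1$ is functorial, not the local elementary membership). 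Second, you explicitly verify the group axioms by exhibiting witnesses for the identity, products and inverses ($\mathrm{Id}_{P[X]}$, $\alpha_1(X)\alpha_2(X)$, $\alpha(X)^{-1}$), using that $\mathrm{Sp}(P[X],\langle,\rangle_{\otimes R[X]})$ and each $\mathrm{ESp}_{\varphi\otimes R_\ideal{p}[X]}(R_\ideal{p}[X])$ are groups and that evaluation and localisation are multiplicative; the paper's proof only establishes the containment of $V(P,\langle,\rangle)$ in $\mathrm{Sp}(P,\langle,\rangle)$ and leaves closure under composition and inversion implicit, so on this point your argument is actually more complete. Your appeal to Lemma \ref{lemma:local basis independence} for independence of the choice of local basis is the same device the paper uses to make Definition \ref{V(P) defn} well posed, and, as you note, none of the normality results of Section \ref{section:3} is needed at this stage.
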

\begin{proof}
	Let $\gamma\in V(P,\langle,\rangle)$. Then, by definition, there exists an isometry $\alpha(X)$ of $P[X]$ with $\alpha(X)_\ideal{p} \in \mathrm{ESp}_{\varphi\otimes R_\ideal{p}[X]}(R_\ideal{p}[X])$ for all $\ideal{p}\in Spec(R)$ such that $\alpha(0)=Id$ and $\alpha(1) = \gamma$. As $\alpha(X)_\ideal{p} \in \mathrm{ESp}_{\varphi\otimes R_\ideal{p}[X]}(R_\ideal{p}[X])$ for all $\ideal{p}\in Spec(R)$, we have $\alpha(1)_\ideal{p} \in \mathrm{ESp}_{\varphi\otimes R_\ideal{p}}(R_\ideal{p})$ for all $\ideal{p}\in Spec(R)$. In particular, $\alpha(1)_\ideal{p}$ is an isomorphism for all $\ideal{p}\in Spec(R)$. Hence, $\alpha(1):P\rightarrow P$ is an isomorphism. Also, for $p,q\in P$,
	\begin{eqnarray*}
		\langle\alpha(1)(p),\alpha(1)(q)\rangle
		&=&\langle\alpha(X)(p),\alpha(X)(q)\rangle_{\otimes R[X]}(1)\\
		&=&\langle p,q\rangle_{\otimes R[X]}(1) \text{ [as $\alpha(X)$ is an isometry]}\\
		&=&\langle p,q\rangle 
	\end{eqnarray*}
	Therefore $\gamma=\alpha(1)$ is an isometry of $P$ and hence $\gamma\in \mathrm{Sp}(P,\langle,\rangle)$. This gives us that $V(P,\langle,\rangle)$ is a subgroup of the group of isometries of $P$.
\qed        	    	      
\end{proof}

\medskip
Next, we will state a relative version of the above lemma which can be proved using a similar argument as above.

\begin{lemma}
    Let $(P,\langle,\rangle)$ be a symplectic $R$-module with $P$ finitely generated $R$-module of rank $2n$ and $I$ be an ideal of $R$. Then, $V(P,IP,\langle,\rangle)$ is a subgroup of the group of isometries of $P$.
\end{lemma}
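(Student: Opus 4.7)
The plan is to mirror the proof of Lemma \ref{lemma:V is subgp of Sp} essentially verbatim, exploiting the fact that the only difference between the two definitions $V(P,\langle,\rangle)$ and $V(P,IP,\langle,\rangle)$ is that in the relative case we impose a stronger local condition, namely $\alpha(X)_\ideal{p} \in \mathrm{ESp}_{\varphi\otimes R_\ideal{p}[X]}(R_\ideal{p}[X], I_\ideal{p}[X])$ instead of just $\mathrm{ESp}_{\varphi\otimes R_\ideal{p}[X]}(R_\ideal{p}[X])$. Since the relative group is, by construction as a normal closure, contained in the absolute one, every witness valid in the relative setup is also a witness in the absolute setup, and all the previously established consequences (invertibility of the specialization and preservation of the form) transfer unchanged.

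Concretely, I would take an arbitrary $\gamma \in V(P,IP,\langle,\rangle)$ and pick a corresponding $\alpha(X) \in \mathrm{Sp}(P[X], \langle,\rangle_{\otimes R[X]})$ with $\alpha(0)=Id$, $\alpha(1)=\gamma$, and $\alpha(X)_\ideal{p} \in \mathrm{ESp}_{\varphi\otimes R_\ideal{p}[X]}(R_\ideal{p}[X], I_\ideal{p}[X])$ for every $\ideal{p} \in \mathrm{Spec}(R)$. Using the inclusion $\mathrm{ESp}_{\varphi\otimes R_\ideal{p}[X]}(R_\ideal{p}[X], I_\ideal{p}[X]) \subseteq \mathrm{ESp}_{\varphi\otimes R_\ideal{p}[X]}(R_\ideal{p}[X])$, specialization at $X=1$ yields $\alpha(1)_\ideal{p} \in \mathrm{ESp}_{\varphi\otimes R_\ideal{p}}(R_\ideal{p})$, so $\alpha(1)_\ideal{p}$ is an isomorphism for every prime $\ideal{p}$. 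By the usual local-to-global principle for isomorphisms of finitely generated modules, $\gamma = \alpha(1): P \rightarrow P$ is itself an $R$-module isomorphism.

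Next I would copy the bilinear computation from the absolute case: for any $p, q \in P$,
\begin{equation*}
\langle \gamma(p), \gamma(q) \rangle = \langle \alpha(X)(p), \alpha(X)(q) \rangle_{\otimes R[X]}(1) = \langle p, q \rangle_{\otimes R[X]}(1) = \langle p, q \rangle,
\end{equation*}
where the first equality uses the remark immediately preceding Lemma \ref{lemma:V is subgp of Sp}, the second uses that $\alpha(X)$ is an isometry of $(P[X], \langle,\rangle_{\otimes R[X]})$, and the third follows from the definition of $\langle,\rangle_{\otimes R[X]}$. This shows $\gamma \in \mathrm{Sp}(P, \langle,\rangle)$, so $V(P, IP, \langle,\rangle)$ is contained in the group of isometries.

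There is no real obstacle in this argument; the entire content is that the containment $\mathrm{ESp}_\varphi(R_\ideal{p}[X], I_\ideal{p}[X]) \subseteq \mathrm{ESp}_\varphi(R_\ideal{p}[X])$ reduces everything to the already-established absolute case. The only point worth double-checking is that this containment holds at the level of definitions, which it does because the relative group is by construction generated inside the absolute elementary symplectic group. I would present the proof as a short parallel of Lemma \ref{lemma:V is subgp of Sp} and simply remark that closure under group operations and existence of the identity are verified exactly as in the absolute case (the identity corresponding to the constant path $\alpha(X) = Id$).
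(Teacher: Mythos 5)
Your proposal is correct and follows exactly the route the paper intends: the paper states this relative version without a separate proof, remarking only that it "can be proved using a similar argument" as Lemma \ref{lemma:V is subgp of Sp}, which is precisely your strategy of using the containment $\mathrm{ESp}_{\varphi\otimes R_\ideal{p}[X]}(R_\ideal{p}[X],I_\ideal{p}[X])\subseteq \mathrm{ESp}_{\varphi\otimes R_\ideal{p}[X]}(R_\ideal{p}[X])$ and then repeating the invertibility and isometry computations verbatim.
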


%***********************************************************************************************************************************************************************

\section{$V(P,IP,\langle,\rangle)$ in free case}
\label{section:7}
	
In this section, we prove the Local-Global principle for relative symplectic case, using which we prove that when $P$ is free, the group $V(P,IP,\langle,\rangle)$ equals the relative elementary symplectic group with respect to some alternating matrix. 

\begin{definition} %ESp_2n^1(R,I)
	\rm{Let $R$ be a commutative ring and $I$ be an ideal of $R$.  The subgroup of $\mathrm{ESp}_{2n}(R,I)$ generated by the elementary symplectic matrices $se_{1j}(a)$, where $a\in R$, and $se_{i1}(x)$, where $x\in I$ is denoted by $\mathrm{ESp}_{2n}^1(R,I)$.}
\end{definition} %ESp_2n^1(R,I)

\begin{lemma}(Lemma 2.10, \cite{JOA})
\label{ESp=Esp1 cap Sp}
    Let $R$ be a ring with $R=2R$ and $I$ be an ideal of $R$. Let $n\geq 2$. Then, $\mathrm{ESp}_{2n}(R,I)=\mathrm{ESp}_{2n}^1(R,I)\cap\mathrm{Sp}_{2n}(R,I)$
\end{lemma}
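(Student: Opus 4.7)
The plan is to prove both inclusions separately. The inclusion $\mathrm{ESp}_{2n}(R,I)\subseteq\mathrm{Sp}_{2n}(R,I)$ is immediate from the definitions, and the remaining part of the first inclusion, $\mathrm{ESp}_{2n}(R,I)\subseteq\mathrm{ESp}_{2n}^1(R,I)$, reduces to showing each generator $se_{kl}(a)\,se_{ij}(x)\,se_{kl}(-a)$ of $\mathrm{ESp}_{2n}(R,I)$ (with $a\in R$, $x\in I$) lies in $\mathrm{ESp}_{2n}^1(R,I)$. For this I would use the standard symplectic Chevalley commutator identities from Kopeiko \cite{Kop}: since $n\geq 2$, one can always find an auxiliary index to express $se_{ij}(x)$ for $x\in I$ as a commutator of the form $[se_{i1}(x),se_{1j}(\cdot)]$ or a short product of such commutators, all of whose factors are generators of $\mathrm{ESp}_{2n}^1(R,I)$; a similar expansion of $se_{kl}(a)$ deals with the outer conjugation.

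For the reverse inclusion $\mathrm{ESp}_{2n}^1(R,I)\cap\mathrm{Sp}_{2n}(R,I)\subseteq\mathrm{ESp}_{2n}(R,I)$, take $\alpha$ in the intersection and write $\alpha = g_1g_2\cdots g_m$ as a word in the generators, each $g_k$ being either a \emph{row} generator $se_{1,j_k}(a_k)$ with $a_k\in R$ or a \emph{column} generator $se_{i_k,1}(x_k)$ with $x_k\in I$. The strategy is to sweep all row generators to the left of all column generators. Whenever a subword $hg$ appears with $h$ a column generator and $g$ a row generator, rewrite $hg = gh\cdot[h^{-1},g^{-1}]$; the commutator $[h^{-1},g^{-1}]=[se_{i1}(-x),se_{1j}(-a)]$ expands as a short product of symplectic elementary matrices whose parameters are each divisible by $xa\in I$, so it lies in $\mathrm{ESp}_{2n}(I)\subseteq\mathrm{ESp}_{2n}(R,I)$. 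Using normality of $\mathrm{ESp}_{2n}(R,I)$ in $\mathrm{Sp}_{2n}(R)$ (Lemma \ref{lemma:3.1}), each such commutator can be conjugated past the rest of the word and absorbed into a running ``error'' factor. Iterating yields a factorization $\alpha = \rho\cdot\kappa\cdot\eta$, where $\rho$ is a product of row generators, $\kappa\in\mathrm{ESp}_{2n}(I)$ is a product of column generators, and $\eta\in\mathrm{ESp}_{2n}(R,I)$ collects the commutator contributions.

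Since $\alpha,\kappa,\eta\in\mathrm{Sp}_{2n}(R,I)$, so is the remaining factor $\rho$, which lies by construction in the Siegel unipotent $U_+=\langle se_{1j}(a):a\in R,\,2\leq j\leq 2n\rangle$. This subgroup is nilpotent of class two---its derived subgroup sits inside the central one-parameter subgroup $\langle se_{12}(R)\rangle$---so every element of $U_+$ admits a unique normal form $se_{12}(b_2)\,se_{13}(b_3)\cdots se_{1,2n}(b_{2n})$; the entries $b_j$ for $j\geq 3$ are read off directly from the first row of $\rho - I_{2n}$, and $b_2$ is then pinned down by the $(1,2)$-entry. The congruence $\rho\equiv I_{2n}\pmod{I}$ therefore forces $b_2,\ldots,b_{2n}\in I$, so $\rho\in\mathrm{ESp}_{2n}(I)\subseteq\mathrm{ESp}_{2n}(R,I)$. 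Combining, $\alpha = \rho\kappa\eta\in\mathrm{ESp}_{2n}(R,I)$.

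The main technical obstacle is the rearrangement step: each inserted commutator $[h^{-1},g^{-1}]$ is itself a short product of several $se_{**}(\cdot)$-matrices, and verifying that every one of those factors really has parameter in $I$ requires expanding the symplectic commutator identities explicitly and enumerating the relevant index configurations; likewise, the iterated application of normality must be carried out consistently so that no non-relative contribution leaks into the running error. A cleaner organization, closer to Kopeiko's approach underlying Lemma \ref{ESp cap Sp X}, is to lift the word to $R[T]$ by substituting $x_k\mapsto Tx_k$ in each column generator to form $\tilde\alpha(T)\in\mathrm{ESp}_{2n}^1(R[T])$; then $\tilde\alpha(0)$ is a pure row-product falling under the $U_+$-analysis above, and invoking Lemma \ref{ESp cap Sp X} on $\tilde\alpha(T)\tilde\alpha(0)^{-1}$ packages the swap bookkeeping automatically, reducing the problem to tracking how $I$-relativity transfers through the specialization at $T=1$.
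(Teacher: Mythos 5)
The paper itself gives no proof of this lemma (it is quoted from \cite{JOA}, Lemma 2.10), so your argument has to stand on its own. Your treatment of the inclusion $\mathrm{ESp}_{2n}^1(R,I)\cap\mathrm{Sp}_{2n}(R,I)\subseteq\mathrm{ESp}_{2n}(R,I)$ is essentially sound: sweeping the row generators to the left works, although your justification of the swap step is off in places --- for opposite-index pairs such as $se_{i1}(x),\,se_{1i}(a)$ the commutator is not a short product of elementary matrices with parameters divisible by $xa$ (it is not even unipotent). But no Chevalley expansion is needed there: each displaced column generator just gets conjugated by a product of row generators, i.e.\ by elements of $\mathrm{ESp}_{2n}(R)$, so the resulting factor lies in $\mathrm{ESp}_{2n}(R,I)$ by the very definition of the normal closure (Lemma \ref{lemma:3.1} is not even needed). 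The Heisenberg analysis of $U_+$ and the extraction of $b_2,\dots,b_{2n}\in I$ from $\rho\equiv I_{2n}\ (\mathrm{mod}\ I)$ is correct.

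The genuine gap is in the inclusion $\mathrm{ESp}_{2n}(R,I)\subseteq\mathrm{ESp}_{2n}^1(R,I)$, which is where the content of the lemma lies. Your plan to handle the outer conjugation by ``a similar expansion of $se_{kl}(a)$'' cannot work as stated: for $k,l\neq 1$ and $a\notin I$ the matrix $se_{kl}(a)$ is in general not an element of $\mathrm{ESp}_{2n}^1(R,I)$, so it cannot be expanded inside that group. Indeed, modulo $I$ every element of $\mathrm{ESp}_{2n}^1(R,I)$ lies in the subgroup generated by the $se_{1j}$'s, which fixes $\bar e_1$, whereas for example $se_{23}(a)=I_{2n}+ae_{23}+ae_{41}$ sends $e_1$ to $e_1+ae_4$. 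What is actually required is to show that conjugating (a chosen expansion of) $se_{ij}(x)$, $x\in I$, by each single elementary generator --- including by $se_{k1}(a)$ with $a\in R\setminus I$, itself outside $\mathrm{ESp}_{2n}^1(R,I)$ --- lands back in $\mathrm{ESp}_{2n}^1(R,I)$; this is a nontrivial case-by-case computation with the symplectic commutator relations, and the configurations where the conjugator and a factor of the expansion occupy opposite positions produce non-unipotent pieces that need further decomposition. Moreover, your sketch never uses the hypothesis $R=2R$, which is a red flag: long-root generators are reached from first-row/column generators only up to a factor of $2$ (for instance $[se_{31}(x),se_{14}(b)]=se_{34}(2xb)$ in $\mathrm{Sp}_4$), so already proving $se_{i\sigma(i)}(x)\in\mathrm{ESp}_{2n}^1(R,I)$ for $x\in I$ requires $2$ to be invertible. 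Until these points are supplied, the forward inclusion is not proved.
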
	

\begin{remark}
	\label{f(0)+Xg(X)}
	If $f(X)\in R[X]$, then we can write $f(X)=f(0)+Xg(X)$ for some $g(X)\in R[X]$ and we have $se_{ij}(f(X))=se_{ij}(f(0))se_{ij}(Xg(X))$. If $f(X)\in I[X]$, then $f(0)\in I$ and $g(X)\in I[X]$.
\end{remark}

\begin{lemma}(Lemma 3.1, \cite{JPAA})
\label{Y^rX}
    Let $R$ be a commutative ring with $R=2R$ and $I$ be an ideal of $R$. Suppose $n\geq 2$. If $\gamma=\gamma_1\cdots\gamma_r\in\mathrm{ESp}_{2n}^1(R,I)$, where each $\gamma_i$ is an elementary symplectic matrix, and $se_{ij}(Xf(X))$ is an elementary generator of $\mathrm{ESp}_{2n}^1(R[X],I[X])$, then
    \begin{equation*}
    	\gamma se_{ij}(Y^{4^r}Xf(Y^{4^r}X))\gamma^{-1}=\prod se_{i_tj_t}(Yh_t(X,Y)), 
    \end{equation*}
    where $i_t=1$ or $j_t=1$ and $h_t(X,Y)\in\begin{cases}
    	R[X,Y]\text{ if }i_t=1\\I[X,Y]\text{ if }j_t=1. \end{cases}$
\end{lemma}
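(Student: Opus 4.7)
The natural approach is induction on $r$, with the base case driving essentially all the technical work. Write $\gamma = \gamma_1 \gamma'$ where $\gamma' = \gamma_2 \cdots \gamma_r$, so that
\begin{equation*}
\gamma \, se_{ij}(Y^{4^r} X f(Y^{4^r} X)) \, \gamma^{-1} = \gamma_1 \Bigl( \gamma' \, se_{ij}(Y^{4^r} X f(Y^{4^r} X)) \, \gamma'^{-1} \Bigr) \gamma_1^{-1}.
\end{equation*}
To feed the inductive hypothesis in, I factor $Y^{4^r} = Y^{4^{r-1}} \cdot Y^{3\cdot 4^{r-1}}$ and set $X' = Y^{3\cdot 4^{r-1}} X$, so the starting generator rewrites as $se_{ij}(Y^{4^{r-1}} X' f(Y^{4^{r-1}} X'))$. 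The induction hypothesis applied to $\gamma'$ then gives $\gamma' \, se_{ij}(\cdots) \, \gamma'^{-1} = \prod_t se_{i_t j_t}(Y\, h_t(X', Y))$, and substituting back $X' = Y^{3\cdot 4^{r-1}} X$ preserves both the shape $Y \cdot (\text{polynomial})$ and the module membership $R[X,Y]$ vs.\ $I[X,Y]$.

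The base case $r = 1$ is the heart of the argument. With $\gamma_1 = se_{kl}(c)$ (where $k=1$, $c\in R$, or $l=1$, $c\in I$) and $a := Y^4 X f(Y^4 X)$, I would start from the identity $\gamma_1 \, se_{ij}(a) \, \gamma_1^{-1} = se_{ij}(a) \, [se_{kl}(c), se_{ij}(a)]$ and expand the commutator by the symplectic Steinberg-type relations. Since $se_{ij}(a)$ is in general a product of two elementary matrices (coming from the second summand in the definition when $i \neq \sigma(j)$), the commutator expands into a bounded list of terms whose entries are monomials $\pm c^u a^v$. Because $a$ already contains the factor $Y^4$, every such entry carries at least one power of $Y$; whenever an individual term has neither index equal to $1$, I would invoke the standard symplectic identity $se_{pq}(\lambda\mu) = [se_{p1}(\lambda), se_{1q}(\mu)]$ (with the dual identities when the second index must be $1$) to rewrite that term as a product of $\mathrm{ESp}_{2n}^1$-generators. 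The exponent $4$ is precisely what affords enough $Y$'s to split among the at most four factors that this repackaging can introduce, while keeping the entries in $I[X,Y]$ whenever the new second index equals $1$.

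For the inductive step, after applying the hypothesis to $\gamma'$ I conjugate the resulting product $\prod_t se_{i_t j_t}(Y \tilde h_t(X,Y))$ term by term by $\gamma_1$. Each conjugation is handled by the same base-case calculation, this time with $a$ replaced by $Y \tilde h_t(X,Y)$; the crucial point is that the entries produced are polynomials in $c$ and $Y \tilde h_t$, each of which is already divisible by $Y$, so divisibility by $Y$ is propagated without needing a further power.

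The main obstacle will be the combinatorial case analysis inside the symplectic commutator expansion in the base case. One has to check, across all admissible index pairs $(k,l)$ and $(i,j)$, that the ``stray'' summands created by the $-(-1)^{i+j} ae_{\sigma(j)\sigma(i)}$ part of $se_{ij}(a)$ can always be repackaged as products of $\mathrm{ESp}_{2n}^1$ generators with the correct $R[X,Y]$ versus $I[X,Y]$ membership, without degrading the $Y$-divisibility of their entries. Tracking this ``budget'' of $Y$-powers through the worst-case decomposition is exactly what dictates the choice of the exponent $4$ per conjugation step, and hence $4^r$ after $r$ steps; verifying this budget is always sufficient (and using $R = 2R$ to absorb the factor of $2$ that arises in the symplectic Steinberg identity $se_{ij}(ab) = [se_{i1}(a), se_{1j}(b)]$ when $j = \sigma(i)$) is the technical core of the proof.
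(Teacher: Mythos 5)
The paper itself does not prove this lemma; it quotes it as Lemma 3.1 of \cite{JPAA}, so your argument has to stand on its own. Your base case is in the right spirit (symplectic commutator calculus, repackaging of terms with neither index equal to $1$ via relations of the type $se_{pq}(\lambda\mu)=[se_{p1}(\lambda),se_{1q}(\mu)]$, and $R=2R$ to absorb factors of $2$), but the inductive step has a genuine gap. After writing $\gamma=\gamma_1\gamma'$ and applying the induction hypothesis to $\gamma'$ with $X'=Y^{3\cdot 4^{r-1}}X$, all you are guaranteed is a product $\prod_t se_{i_tj_t}(Y\tilde h_t(X,Y))$ whose arguments carry a single power of $Y$: the substitution $X\mapsto Y^{3\cdot 4^{r-1}}X$ buys nothing more, because the $h_t$ produced by the induction hypothesis need not involve $X$ at all (indeed the repackaged factors in your own base case have arguments divisible by only one $Y$). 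When you then conjugate a factor $se_{i_tj_t}(Y\tilde h_t)$ by $\gamma_1=se_{kl}(c)$, the commutator relations produce terms $se_{pq}(\mu)$ with $p,q\neq 1$ and $\mu=cY\tilde h_t$ (or $c^2Y\tilde h_t$, etc.); to rewrite such a term as a product of $\mathrm{ESp}^1_{2n}$-generators with arguments divisible by $Y$ you must factor $\mu$ as a product of two polynomials \emph{each} divisible by $Y$ (with the $I$-membership on the correct factor), and $cY\tilde h_t$ admits no such factorization in general. Your assertion that ``divisibility by $Y$ is propagated without needing a further power'' conflates the conjugated matrix being congruent to $I$ modulo $(Y)$ with its being a product of elementary generators whose arguments are divisible by $Y$; the latter is essentially the content of the lemma (compare Lemma \ref{lemma:resp=esp cap rsp}, which cannot be invoked here without circularity). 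Note also that if a single conjugation cost no power of $Y$, the lemma would hold with $Y^{4}$ in place of $Y^{4^r}$, which is exactly what the geometric exponent is there to prevent.

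The repair is the standard budget bookkeeping: apply the induction hypothesis to $\gamma'$ with $Y$ replaced by $Y^4$, i.e.\ to $se_{ij}\bigl((Y^4)^{4^{r-1}}Xf((Y^4)^{4^{r-1}}X)\bigr)$, which is the same matrix since $(Y^4)^{4^{r-1}}=Y^{4^r}$. The inner conjugation then comes out as $\prod_t se_{i_tj_t}(Y^4h_t(X,Y^4))$, with arguments divisible by $Y^4$, and the single outer conjugation by $\gamma_1$ is handled by your base-case analysis, which now has enough powers of $Y$ to distribute among the (at most four) factors of each repackaged term while respecting the $R[X,Y]$ versus $I[X,Y]$ membership. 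Each peeled-off $\gamma_i$ consumes a factor $4$ of the exponent; this is the actual reason for $4^r$, and it is the step your write-up skips.
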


\begin{lemma}
\label{prod aibi}
	Let $G$ be a group and $a_i,b_i\in G$ for $i=1,\ldots,n$. Then 
	\begin{equation*}
		\prod_{i=1}^na_ib_i=\bigg( \prod_{i=1}^n \big(\prod_{j=1}^i a_j \big) b_i \big(\prod_{j=1}^i a_j \big)^{-1} \bigg)\prod_{i=1}^n a_i.
	\end{equation*}
\end{lemma}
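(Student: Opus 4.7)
The statement is a purely formal identity in any group $G$, so the natural strategy is induction on $n$. The underlying move in each step is the trivial conjugation identity $gh = (ghg^{-1})g$, which lets one push a string of $a_j$'s past a single $b$ at the cost of conjugating that $b$.

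For the base case $n=1$, we have $a_1b_1 = (a_1 b_1 a_1^{-1})a_1$, which matches the right-hand side since both products $\prod_{j=1}^1 a_j$ equal $a_1$. For the inductive step, assume the identity for $n$. Writing $\prod_{i=1}^{n+1}a_ib_i = \bigl(\prod_{i=1}^n a_ib_i\bigr)\, a_{n+1}b_{n+1}$ and applying the inductive hypothesis to the first factor, the expression becomes
\begin{equation*}
\Bigl(\prod_{i=1}^n \bigl(\textstyle\prod_{j=1}^i a_j\bigr)\, b_i\, \bigl(\textstyle\prod_{j=1}^i a_j\bigr)^{-1}\Bigr)\Bigl(\textstyle\prod_{i=1}^n a_i\Bigr)\, a_{n+1}\, b_{n+1}.
\end{equation*}
Collecting $\bigl(\prod_{i=1}^n a_i\bigr)a_{n+1} = \prod_{j=1}^{n+1} a_j$ and then applying the elementary identity $gh = (ghg^{-1})g$ with $g = \prod_{j=1}^{n+1}a_j$ and $h = b_{n+1}$ produces exactly the missing $(n+1)$-st factor $\bigl(\prod_{j=1}^{n+1}a_j\bigr)b_{n+1}\bigl(\prod_{j=1}^{n+1}a_j\bigr)^{-1}$ on the left, together with a trailing $\prod_{j=1}^{n+1}a_j$ on the right, which is what the right-hand side at stage $n+1$ demands.

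There is no genuine obstacle here; the only thing to be careful about is bookkeeping of the nested products $\prod_{j=1}^{i} a_j$ and consistent associativity. The argument is essentially just iterating the slogan ``to sort all the $a_i$'s to the right, replace each $b_i$ by its conjugate under the accumulated prefix $a_1\cdots a_i$.'' Consequently the proof should be a short induction, with the inductive step occupying one displayed line once the conjugation identity is invoked.
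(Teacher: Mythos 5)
Your induction is correct: the base case $a_1b_1=(a_1b_1a_1^{-1})a_1$ is immediate, and in the inductive step the regrouping $\bigl(\prod_{i=1}^n a_i\bigr)a_{n+1}=\prod_{j=1}^{n+1}a_j$ followed by $gh=(ghg^{-1})g$ with $g=\prod_{j=1}^{n+1}a_j$, $h=b_{n+1}$ yields exactly the right-hand side at stage $n+1$. The paper states this lemma without proof, so there is no argument to diverge from; your short induction via the conjugation identity is precisely the routine verification the paper implicitly relies on, and nothing is missing.
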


\begin{remark}
	\label{ESp psi n=ESp 2n}
	Let $v^t=(a_1,\ldots,a_{2n-1})\in R^{2n-1}$. Then,
	\begin{equation*}
		C_{\psi_n}(v)=\prod_{i=2}^{2n-1}se_{i1}(a_{i-1})\text{ and }R_{\psi_n}(v)=\prod_{j=2}^{2n-1}se_{1j}(a_{i-1})
	\end{equation*}
\end{remark}

\begin{remark}
	\label{ESp phi Esp phi*}
	If $\varphi$ and $\varphi^\ast$ are invertible alternating matrices of size $2n$ over $R$ such that $\varphi=(1\perp\epsilon)^t\varphi^\ast(1\perp\epsilon)$ for some $\epsilon\in \mathrm{E}_{2n-1}(R)$, then 
	\begin{eqnarray*}
		(1\perp\epsilon)^{-1}C_{\varphi^\ast}(v)(1\perp\epsilon)=C_\varphi(\epsilon^{-1}v),\\ (1\perp\epsilon)^{-1}R_{\varphi^\ast}(v)(1\perp\epsilon)=R_\varphi(\epsilon^tv).
	\end{eqnarray*} 
\end{remark}

\begin{lemma}(Dilation principle)
\label{lemma:Dilation}
	Let $R$ be a ring with $R=2R$ and $I$ be an ideal of $R$. Let $a\in R$ be non-nilpotent. Let $\varphi$ be an invertible alternating matrix of size $2n$ such that $\varphi=(1\perp\epsilon)^t\psi_n(1\perp\epsilon)$ for some $\epsilon\in\mathrm{E}_{2n-1}(R_a)$. Suppose $\alpha(X)\in\mathrm{ESp}_{\varphi\otimes R_a[X]}(R_a[X],I_a[X])$ with $\alpha(0)=I_{2n}$. Then, there exists $\alpha^\ast(X)\in\mathrm{ESp}_{\varphi\otimes R[X]}(R[X],I[X])$ with $\alpha^\ast(0)=I_{2n}$ such that $\alpha^\ast(X)$ localises to $\alpha(bX)$ for some $b\in (a^d)$, $d\gg 0$.  
\end{lemma}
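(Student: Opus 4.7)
The plan is to follow the standard \emph{dilation paradigm}: first conjugate $\alpha(X)$ by $(1\perp\epsilon)$ to translate the problem to the standard form $\psi_n$, carry out a denominator-clearing argument at the level of $\mathrm{ESp}_{2n}$ using Lemma \ref{Y^rX}, and finally transport the resulting lift back. The crux is that the power of $a$ used in the dilation can be chosen large enough to also absorb the denominators from $\epsilon^{\pm 1}$, enabling the back-conjugation to descend from $R_a$ to $R$. To begin, set $\beta(X) := (1\perp\epsilon)\alpha(X)(1\perp\epsilon)^{-1}$. By Remark \ref{ESp phi Esp phi*}, conjugation sends $C_\varphi(v)\mapsto C_{\psi_n}(\epsilon v)$ and $R_\varphi(v) \mapsto R_{\psi_n}((\epsilon^t)^{-1}v)$; since $\epsilon, (\epsilon^t)^{-1}$ have entries in $R_a$, they preserve $I_a[X]^{2n-1}$, so generators of $\mathrm{ESp}_\varphi(I_a[X])$ are sent to generators of $\mathrm{ESp}_{\psi_n}(I_a[X])$. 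Combining with Lemma \ref{lemma:3.5} and Lemma \ref{lemma:3.7} (using $R=2R$) gives $\beta(X)\in\mathrm{ESp}_{2n}(R_a[X], I_a[X])$ with $\beta(0)=I_{2n}$.

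For the dilation, use Lemma \ref{ESp=Esp1 cap Sp} to write $\beta(X)=\gamma_1(X)\cdots\gamma_r(X)$ as a product of generators of $\mathrm{ESp}_{2n}^1(R_a[X], I_a[X])$. Setting $\eta_i:=\gamma_i(0)$ and $\delta_i(X):=\eta_i^{-1}\gamma_i(X) = se_{k_i l_i}(X g_i(X))$ via Remark \ref{f(0)+Xg(X)}, and using $\prod\eta_i=\beta(0)=I_{2n}$, Lemma \ref{prod aibi} yields
\begin{equation*}
	\beta(X) = \prod_{i=1}^r E_i\,\delta_i(X)\,E_i^{-1},\qquad E_i:=\eta_1\cdots\eta_i\in\mathrm{ESp}_{2n}^1(R_a, I_a).
\end{equation*}
Substituting $X\mapsto Y^{4^r}X$ and applying Lemma \ref{Y^rX} to each $E_i\delta_i(Y^{4^r}X)E_i^{-1}$ produces
\begin{equation*}
	\beta(Y^{4^r}X)=\prod_t se_{i_t j_t}(Y\,h_t(X,Y))
\end{equation*}
with $h_t \in R_a[X,Y]$ when $i_t=1$ and $h_t\in I_a[X,Y]$ when $j_t=1$. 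Choose $s\geq 0$ large enough that $a^s$ clears every denominator appearing in the finitely many $h_t$'s and also in every entry of $\epsilon$ and $\epsilon^{-1}$. Substituting $Y\mapsto a^sY$, the product
\begin{equation*}
	\tilde\beta(X,Y):=\prod_t se_{i_t j_t}(Y\cdot a^s\,h_t(X,a^sY))
\end{equation*}
lies in $\mathrm{ESp}_{2n}^1(R[X,Y],I[X,Y])$ and localises to $\beta(a^{s\cdot 4^r}Y^{4^r}X)$. Set $\beta^*(X):=\tilde\beta(X,1)\tilde\beta(0,1)^{-1}$, which satisfies $\beta^*(0)=I_{2n}$ and localises to $\beta(a^{s\cdot 4^r}X)$.

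For the transport back, use Remark \ref{ESp psi n=ESp 2n} to rewrite each $se_{i1}(f)$- (resp. $se_{1j}(f)$-) factor of $\beta^*$ as $C_{\psi_n}(fe_{i-1})$ (resp. $R_{\psi_n}(fe_{j-1})$). Thanks to the $a^s$-factor in every such $f$, conjugation by $(1\perp\epsilon)^{-1}$ via Remark \ref{ESp phi Esp phi*} sends this to $C_\varphi(f\,\epsilon^{-1}e_{i-1})$ or $R_\varphi(f\,\epsilon^t e_{j-1})$, whose vector arguments lie in $R[X]^{2n-1}$, and in $I[X]^{2n-1}$ whenever the original $f$ came from an $se_{i1}$-generator (so $f\in I[X]$). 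Assembling these conjugated factors defines $\alpha^*(X):=(1\perp\epsilon)^{-1}\beta^*(X)(1\perp\epsilon)$; this yields an element of $\mathrm{ESp}_\varphi(R[X], I[X])$ with $\alpha^*(0)=I_{2n}$ that localises to $(1\perp\epsilon)^{-1}\beta(bX)(1\perp\epsilon)=\alpha(bX)$, where $b:=a^{s\cdot 4^r}\in(a^d)$ with $d:=s\cdot 4^r$.

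The main obstacle lies in the transport-back step. First, $s$ must be chosen to simultaneously clear the denominators of the $h_t$'s arising in the dilation and those of $\epsilon^{\pm 1}$ arising in the conjugation, so that every conjugated factor really sits over $R$. Second, one has to verify that the resulting mixed product of $C_\varphi$- and $R_\varphi$-generators — with some arguments in $R[X]^{2n-1}$ and others in $I[X]^{2n-1}$ — actually assembles into the normal closure $\mathrm{ESp}_\varphi(R[X], I[X])$ rather than merely into $\mathrm{ESp}_\varphi(R[X])$.
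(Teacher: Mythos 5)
Your proposal follows essentially the same route as the paper's proof: conjugate to the standard form via $(1\perp\epsilon)$, decompose using Lemma \ref{ESp=Esp1 cap Sp}, Remark \ref{f(0)+Xg(X)} and Lemma \ref{prod aibi}, dilate with Lemma \ref{Y^rX} by a power of $a$ chosen large enough to absorb the denominators of both the $h_t$'s and of $\epsilon^{\pm 1}$, and transport back through Remarks \ref{ESp psi n=ESp 2n} and \ref{ESp phi Esp phi*}. The only differences are cosmetic --- you perform the dilation at the $\psi_n$-level and normalize explicitly by $\tilde\beta(0,1)^{-1}$ to force $\alpha^\ast(0)=I_{2n}$, whereas the paper conjugates back to the $\varphi$-level first and then substitutes $Y\mapsto a^dY$ with $d$ chosen to clear both kinds of denominators at once --- and the two points you flag at the end are treated in the paper exactly along the lines you indicate, the relative-membership claim being asserted there at the same level of detail as in your sketch.
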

\begin{proof}
	Over $R_a$, we have $\varphi=(1\perp\epsilon)^t\psi_n(1\perp\epsilon)$ for some $\epsilon\in\mathrm{E}_{2n-1}(R_a)$ and hence
	\begin{equation*}
		\mathrm{ESp}_{\varphi\otimes R_a[X]}(R_a[X],I_a[X])=(1\perp\epsilon)^{-1}\mathrm{ESp}_{2n}(R_a[X],I_a[X])(1\perp\epsilon).
	\end{equation*} 
	As $\alpha(X)\in\mathrm{ESp}_{\varphi\otimes R_a[X]}(R_a[X],I_a[X])$, there exists $\beta(X)\in \mathrm{ESp}_{2n}(R_a[X],I_a[X])$ such that $\alpha(X)=(1\perp\epsilon)^{-1}\beta(X)(1\perp\epsilon)$ and $\beta(0)=I_{2n}$. By Lemma \ref{ESp=Esp1 cap Sp}, we have
	%$\beta(X)=\prod se_{i_k,j_k}(f_k(X))$, where $i_k=1$ or $j_k=1$ and $f_k(X)\in R[X]$ if $i_k=1$, $f_k(X)\in I[X]$ if $j_k=1$. 
	\begin{equation*}
		\beta(X)=\prod se_{i_k,j_k}(f_k(X)),\text{ where }i_k=1\text{ or }j_k=1\text{ and }f_k\in\begin{cases}
			R_a[X]\text{ if }i_k=1\\I_a[X]\text{ if }j_k=1.
		\end{cases}
	\end{equation*}
    By remark \ref{f(0)+Xg(X)}, 
    \begin{eqnarray*}
    	\beta(X)=\prod se_{i_k,j_k}(f_k(0))se_{i_k,j_k}(Xg_k(X)),\text{ where }i_k=1\text{ or }j_k=1\text{ and }\\f_k(0)\in\begin{cases}
    		R_a\text{ if }i_k=1\\I_a\text{ if }j_k=1\end{cases},\;g_k(X)\in\begin{cases}
    			R_a[X]\text{ if }i_k=1\\I_a[X]\text{ if }j_k=1.
    		\end{cases}.
    \end{eqnarray*}
    As $\beta(0)=I_{2n}$, we have $\prod se_{i_kj_k}(f_k(0))=I_{2n}$. Hence, by Lemma \ref{prod aibi}, $\beta(X)=\prod\gamma_k se_{i_kj_k}(Xg_k(X))\gamma_k^{-1}$. Now, by Lemma \ref{Y^rX}, for $r\gg 0$
    \begin{eqnarray*}
    	\beta(Y^rX)=\prod se_{i_t,j_t}\bigg(\frac{Yh_t(X,Y)}{a^s}\bigg),\text{ where }i_t=1\text{ or }j_t=1\text{ and }\\h_t(X,Y)\in
    		R[X,Y]\text{ if }i_t=1,\;h_t(X,Y)\in I[X,Y]\text{ if }j_t=1.
    \end{eqnarray*}
    By remark \ref{ESp psi n=ESp 2n}, 
    \begin{eqnarray*}
    	\beta(Y^rX)=\prod\delta_t\bigg(\frac{Yh_t(X,Y)}{a^s}e_{k_t}\bigg),\text{ where }\\
    	\delta_t:=\begin{cases}
    		C_{\psi_n}\text{ if }j_t=1\\R_{\psi_n}\text{ if }i_t=1
    	\end{cases}\text{and }k_t=\begin{cases}
    	i_t\text{ if }j_t=1\\j_t\text{ if }i_t=1.
    \end{cases}
    \end{eqnarray*}
    Define $v_t(X,Y):=\begin{cases}
    \epsilon^{-1}\frac{Yh_t(X,Y)}{a^s}e_{k_t}\text{ if } \delta_t= C_{\psi_n}\\
    \epsilon^t\frac{Yh_t(X,Y)}{a^s}e_{k_t}\text{ if } \delta_t= R_{\psi_n}
    \end{cases}$. Then, by remark \ref{ESp phi Esp phi*}, 
    \begin{equation*}
    	\alpha(Y^rX)=\prod \gamma_t(v_t(X,Y))\text{ where } \gamma_t:=\begin{cases}C_\varphi\text{ if }\delta_t= C_{\psi_n}\\R_\varphi\text{ if }\delta_t= R_{\psi_n}.\end{cases}
    \end{equation*}
    Let $l$ be the maximum power of $a$ in the denominator in $\epsilon^{-1}$ and $\epsilon^t$ and take $d=s+t$. Define $\alpha^\ast(X,Y)=\prod \gamma_t(v_t(X,a^dY))$. Then, \begin{equation*}\alpha^\ast(X,Y)\in\mathrm{ESp}_{\varphi\otimes R[X,Y]}(R[X,Y],I[X,Y]).\end{equation*} Take $\alpha^\ast(X)=\alpha^\ast(X,1)$. Then, $\alpha^\ast(X)\in\mathrm{ESp}_{\varphi\otimes R[X]}(R[X],I[X])$ with $\alpha^\ast(0)=I_{2n}$ and $\alpha^\ast(X)$ localises to $\alpha(a^{rd}X)$.
\qed
\end{proof}
    
\begin{remark}(Lemma 5.2, \cite{JPAA})
\label{phi over local ring}
	Suppose $(R,\ideal{m})$ is a local ring and $\varphi$ is an invertible alternating matrix of size $2n$ of Pfaffian $1$ over $R$. Then, there exists $\epsilon\in\mathrm{E}_{2n-1}(R)$ such that $\varphi=(1\perp\epsilon)^t\psi_n(1\perp\epsilon)$. 
\end{remark}	

\begin{lemma}{(Local-Global principle)}
\label{lemma:LG principle}
	Let $R$ be a ring with $R=2R$. Let $\varphi$ be a skew-symmetric matrix of Pfaffian $1$ of size $2n$ over $R$ with $n\geq 2$. Let $\theta(X)\in\mathrm{Sp}_{\varphi\otimes R[X]}(R[X])$, with $\theta(0)=I_{2n}$. If $\theta(X)_\ideal{m}\in\mathrm{ESp}_{\varphi\otimes R_\ideal{m}[X]}(R_\ideal{m}[X],I_\ideal{m}[X])$, for all maximal ideals $\ideal{m}$ of $R$, then $\theta(X)\in\mathrm{ESp}_{\varphi\otimes R[X]}(R[X],I[X])$. 
\end{lemma}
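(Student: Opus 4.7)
The plan is to run Quillen's local-global strategy with an auxiliary indeterminate $Y$. Introduce the set
\[
Q(\theta) := \{s \in R : \theta(X + sY)\theta(X)^{-1} \in \mathrm{ESp}_{\varphi \otimes R[X, Y]}(R[X, Y], I[X, Y])\}.
\]
Since $\theta(X + sY)\theta(X)^{-1}$ lies in $\mathrm{Sp}_{\varphi\otimes R[X,Y]}(R[X,Y])$ and equals $I_{2n}$ at $Y = 0$, this set is well-posed. The point of working with this particular auxiliary object is that $1 \in Q(\theta)$, upon setting $X = 0$, yields $\theta(Y) \in \mathrm{ESp}_{\varphi \otimes R[Y]}(R[Y], I[Y])$, which is the desired conclusion; hence it suffices to show $Q(\theta) = R$.

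I would first verify that $Q(\theta)$ is an ideal of $R$. Closure under addition rests on the factorization
\[
\theta(X + (s_1 + s_2)Y)\theta(X)^{-1} = \bigl[\theta((X + s_1 Y) + s_2 Y)\theta(X + s_1 Y)^{-1}\bigr] \cdot \bigl[\theta(X + s_1 Y)\theta(X)^{-1}\bigr].
\]
The right bracket is exactly the $s_1$-witness, while the left bracket is obtained from the $s_2$-witness via the polynomial substitution $X \mapsto X + s_1 Y$, which is a ring map $R[X, Y] \to R[X, Y]$ and therefore preserves membership in $\mathrm{ESp}_{\varphi \otimes R[X, Y]}(R[X, Y], I[X, Y])$. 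Closure under multiplication by $r \in R$ comes from the substitution $Y \mapsto rY$ applied to the $s$-witness.

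Next, for each maximal ideal $\ideal{m}$ of $R$, I would produce an element of $Q(\theta) \setminus \ideal{m}$. Using Remark \ref{phi over local ring} and the hypothesis $\theta(X)_\ideal{m} \in \mathrm{ESp}_{\varphi\otimes R_\ideal{m}[X]}(R_\ideal{m}[X], I_\ideal{m}[X])$, clear finitely many denominators to find $a \in R \setminus \ideal{m}$ such that $\varphi = (1 \perp \epsilon')^t \psi_n (1 \perp \epsilon')$ for some $\epsilon' \in \mathrm{E}_{2n-1}(R_a)$ and $\theta(X)_a \in \mathrm{ESp}_{\varphi \otimes R_a[X]}(R_a[X], I_a[X])$. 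Then $\mu(X, Y) := \theta(X + Y)\theta(X)^{-1}$ lies in $\mathrm{ESp}_{\varphi \otimes R_a[X, Y]}(R_a[X, Y], I_a[X, Y])$ with $\mu(X, 0) = I_{2n}$. Applying the dilation principle (Lemma \ref{lemma:Dilation}) with base ring $R[X]$, variable $Y$, and non-nilpotent element $a$ produces $\mu^\ast(X, Y) \in \mathrm{ESp}_{\varphi \otimes R[X, Y]}(R[X, Y], I[X, Y])$ with $\mu^\ast(X, 0) = I_{2n}$ that localizes at $a$ to $\theta(X + bY)\theta(X)^{-1}$, for some $b \in (a^d)$, $d \gg 0$. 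One then concludes $b \in Q(\theta)$, and $b \in (a^d) \setminus \ideal{m}$. Combined with the ideal property, this forces $Q(\theta) = R$.

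The delicate step will be the final identification: the dilation principle only guarantees that $\mu^\ast$ and $\theta(X + bY)\theta(X)^{-1}$ agree in $R_a[X, Y]$, whereas the definition of $Q(\theta)$ requires the actual equality in $R[X, Y]$ (the localization map $R[X,Y] \to R_a[X,Y]$ is not injective in general). Bridging this gap is the main obstacle, and will require a close inspection of the explicit construction in the proof of Lemma \ref{lemma:Dilation} to verify that the matrix $\mu^\ast$ it outputs is literally equal to $\mu(X, bY)$ as a matrix in $R[X, Y]$, or alternatively a mild reformulation of $Q(\theta)$ tolerating agreement up to a localization.
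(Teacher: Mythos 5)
Your proposal is correct and is essentially the paper's own argument: the paper likewise clears denominators at each maximal ideal, applies the dilation lemma to $\gamma(X,Y)=\theta(X+Y)_{a_{\ideal{m}}}\theta(Y)_{a_{\ideal{m}}}^{-1}$, and then assembles the local data via a partition of unity $c_1a_{\ideal{m}_1}^d+\cdots+c_ka_{\ideal{m}_k}^d=1$ together with the telescoping identity $\theta(X)=\prod_{i}\gamma(b_iX,T_i)$, which is exactly your additivity of $Q(\theta)$ written out explicitly, so the Quillen-ideal packaging is only a cosmetic difference. The ``delicate step'' you flag (the dilation lemma a priori giving only agreement after localizing at $a$) is handled in the paper by the very identification you describe---it directly asserts $\gamma(b_{\ideal{m}}X,Y)\in\mathrm{ESp}_{\varphi\otimes R[X,Y]}(R[X,Y],I[X,Y])$---so it does not distinguish your route from the paper's.
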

\begin{proof}
    Let $\ideal{m}$ be a maximal ideal. By remark \ref{phi over local ring}, there exists $\epsilon\in\mathrm{E}_{2n-1}(R_\ideal{m})$ such that $\varphi=(1\perp\epsilon)^t\psi_n(1\perp\epsilon)$. As $\epsilon\in\mathrm{E}_{2n-1}(R_\ideal{m})$ and $\theta(X)_\ideal{m}\in\mathrm{ESp}_{\varphi\otimes R_\ideal{m}[X]}(R_\ideal{m}[X],I_\ideal{m}[X])$, there exists $a_\ideal{m}\in R\setminus\ideal{m}$ such that 
    \begin{equation*}
    	\epsilon\in\mathrm{E}_{2n-1}(R_{a_\ideal{m}}) \text{ and }  \theta(X)_{a_\ideal{m}}\in\mathrm{ESp}_{\varphi\otimes R_{a_\ideal{m}}[X]}(R_{a_\ideal{m}}[X],I_{a_\ideal{m}}[X]).
    \end{equation*} (Choose $a_\ideal{m}$ to be the product of all the denominators in $\epsilon$ and $\theta(X)_\ideal{m}$).
    
    Define $\gamma(X,Y):=\theta(X+Y)_{a_\ideal{m}}\theta(Y)_{a_\ideal{m}}^{-1}$. Then, $\gamma(0,Y)=I_{2n}$ and 
    \begin{equation*}
    	\gamma(X,Y)\in\mathrm{ESp}_{\varphi\otimes R_{a_\ideal{m}}[X,Y]}(R_{a_\ideal{m}}[X,Y],I_{a_\ideal{m}}[X,Y]).
    \end{equation*} 
    By Lemma \ref{lemma:Dilation}, $\gamma(b_\ideal{m}X,Y)\in\mathrm{ESp}_{\varphi\otimes R[X,Y]}(R[X,Y],I[X,Y])$ for some $b_\ideal{m}\in(a_\ideal{m}^d)$.
    
    For every maximal ideal $\ideal{m}$, we have $a_\ideal{m}^d\in R\setminus\ideal{m}$. Therefore, the set $\{a_\ideal{m}^d\; :\; \ideal{m}\text{ is a maximal ideal of }R\}$ generates $R$. There exist $\ideal{m}_1,\ldots,\ideal{m}_k$, maximal ideals of $R$ and $c_1,\ldots,c_k\in R$ such that $c_1a_{\ideal{m}_1}^d+\cdots+c_ka_{\ideal{m}_k}^d=1$. Let $b_i:=c_ka_{\ideal{m}_k}^d$. Then,
    \begin{equation*}
    	\theta(X)=\prod_{i=1}^k\gamma(b_iX,T_i)\text{ where } T_i=\sum_{j=i+1}^kb_jX\text{ for }1\leq i\leq k-1\text{ and }T_k=0.
    \end{equation*}
    As $\gamma(b_iX,T_i)\in\mathrm{ESp}_{\varphi\otimes R[X]}(R[X],I[X])$ for $1\leq i\leq k$, we have 
    \begin{equation*}
    	\theta(X)\in\mathrm{ESp}_{\varphi\otimes R[X]}(R[X],I[X]).
    \end{equation*}
\qed
\end{proof}
	
\begin{lemma}
\label{lemma:V=ESp}
	Let $R$ be a ring with $R=2R$. Let $(P,\langle,\rangle)$ be a symplectic $R$-module, where $P$ is free of rank $2n$. Suppose $\langle,\rangle$ corresponds to an alternating matrix $\varphi$ of size $2n$ over $R$. Then, 
	\begin{equation*}
		V(P,IP,\langle,\rangle_\varphi)=\mathrm{ESp}_\varphi(R,I)
	\end{equation*}
\end{lemma}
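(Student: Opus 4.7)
The plan is to prove the two inclusions $\mathrm{ESp}_\varphi(R,I) \subseteq V(P,IP,\langle,\rangle_\varphi)$ and $V(P,IP,\langle,\rangle_\varphi) \subseteq \mathrm{ESp}_\varphi(R,I)$ separately, with the reverse inclusion being where the work concentrates via the Local-Global principle just established.

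For the inclusion $\mathrm{ESp}_\varphi(R,I) \subseteq V(P,IP,\langle,\rangle_\varphi)$: given $\gamma \in \mathrm{ESp}_\varphi(R,I)$, I would write $\gamma$ as a finite product $\gamma = \prod_k g_k h_k(v_k) g_k^{-1}$, where each $g_k \in \mathrm{ESp}_\varphi(R)$ and each $h_k$ is one of the generators $C_\varphi$ or $R_\varphi$ applied to some $v_k \in I^{2n-1}$. I then form the homotopy $\alpha(X) := \prod_k g_k h_k(X v_k) g_k^{-1}$ in $\mathrm{ESp}_{\varphi \otimes R[X]}(R[X],I[X])$. By construction $\alpha(0)=Id$ and $\alpha(1)=\gamma$, and since $P$ is free, Lemma \ref{lemma:iso gp=sym gp} identifies this with an isometry of $(P[X],\langle,\rangle_{\otimes R[X]})$. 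Because $\alpha(X)$ already lies in the relative elementary symplectic group globally, localisation shows $\alpha(X)_\ideal{p} \in \mathrm{ESp}_{\varphi \otimes R_\ideal{p}[X]}(R_\ideal{p}[X],I_\ideal{p}[X])$ for every prime $\ideal{p}$, so $\gamma \in V(P,IP,\langle,\rangle_\varphi)$.

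For the reverse inclusion, let $\gamma \in V(P,IP,\langle,\rangle_\varphi)$ and pick a witness $\alpha(X)$ satisfying the three conditions of Definition \ref{V(P,IP) defn}. Since $P$ is free of rank $2n$, Lemma \ref{lemma:iso gp=sym gp} (applied over $R[X]$) gives $\alpha(X) \in \mathrm{Sp}_{\varphi \otimes R[X]}(R[X])$, and $\alpha(0)=I_{2n}$. The local hypothesis at every prime, in particular at every maximal ideal, is exactly the hypothesis needed to invoke the Local-Global principle (Lemma \ref{lemma:LG principle}), which concludes that $\alpha(X) \in \mathrm{ESp}_{\varphi \otimes R[X]}(R[X],I[X])$. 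Now evaluating at $X=1$ sends the generators $g^{-1}C_\varphi(v)g$ and $g^{-1}R_\varphi(v)g$ with $v \in I[X]^{2n-1}$ and $g \in \mathrm{ESp}_{\varphi \otimes R[X]}(R[X])$ to generators of the same form over $R$ with $v(1) \in I^{2n-1}$ and $g(1) \in \mathrm{ESp}_\varphi(R)$; hence $\gamma=\alpha(1) \in \mathrm{ESp}_\varphi(R,I)$.

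The main obstacle is the reverse inclusion, and within it the only nontrivial step is the Local-Global application; the rest is assembling the right identifications (isometries with matrices via Lemma \ref{lemma:iso gp=sym gp}) and verifying that the specialisation $X \mapsto 1$ respects the group of generators defining $\mathrm{ESp}_\varphi(R,I)$. Both points are formal given the machinery built up in Sections \ref{section:3}--\ref{section:7}, so beyond citing Lemma \ref{lemma:LG principle} the proof reduces to a tidy bookkeeping argument.
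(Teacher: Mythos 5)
Your proposal is correct and follows essentially the same route as the paper: the forward inclusion by scaling the relative generators by $X$ to build a homotopy $\alpha(X)$ with $\alpha(0)=Id$, $\alpha(1)=\gamma$, and the reverse inclusion by feeding the defining local data of $V(P,IP,\langle,\rangle_\varphi)$ into the Local-Global principle (Lemma \ref{lemma:LG principle}) and evaluating at $X=1$. The extra details you supply (using Lemma \ref{lemma:iso gp=sym gp} to pass between isometries and matrices in the free case, and checking that specialisation $X\mapsto 1$ carries relative generators over $R[X]$ to relative generators over $R$) are harmless refinements of the same argument.
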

\begin{proof}
	Any element of $\mathrm{ESp}_\varphi(R,I)$ is a product of elements of the form $\alpha=\gamma(u)^{-1}\delta(v)\gamma(u)$, where $u\in R$, $v\in I$ and $\gamma,\delta$ denote $C_\varphi$ or $R_\varphi$. Define
	\begin{equation*}
		\alpha(X)=\gamma(uX)^{-1}\delta(vX)\gamma(uX)\in \mathrm{ESp}_{\varphi\otimes R[X]}(R[X],I[X]).
	\end{equation*}  
	Then $\alpha(0)=Id$ and  $\alpha(1)=\alpha$. For a prime ideal $\ideal{p}$ of $R$, we have 
	\begin{equation*}
		\alpha(X)_\ideal{p}=\gamma(u_\ideal{p}X)^{-1}\delta(v_\ideal{p}X)\gamma(u_\ideal{p}X)\in\mathrm{ESp}_{\varphi\otimes R_\ideal{p}[X]}(R_\ideal{p}[X],I_\ideal{p}[X])
	\end{equation*} 
	Then, by definition, we have $\alpha=\alpha(1)\in V(P,IP,\langle,\rangle_\varphi)$. Therefore, $\mathrm{ESp}_\varphi(R,I)\subseteq V(P,IP,\langle,\rangle_\varphi)$.
		
	Let $\gamma\in V(P,IP,\langle,\rangle_\varphi)$. Then, by definition, there exists $\alpha(X)\in \mathrm{Sp}_{\varphi\otimes R[X]}(R[X])$ with $\alpha(X)_\ideal{p} \in \mathrm{ESp}_{\varphi\otimes R_\ideal{p}[X]}(R_\ideal{p}[X],I_\ideal{p}[X])$ for all $\ideal{p}\in Spec(R)$ such that $\alpha(0)=I_{2n}$ and $\alpha(1) = \gamma$. By Lemma \ref{lemma:LG principle}, $\alpha(X)\in \mathrm{ESp}_{\varphi\otimes R[X]}(R[X],I[X])$. Substituting $X=1$, we get $\gamma=\alpha(1)\in\mathrm{ESp}_\varphi(R,I)$. Therefore, the groups $V(P,IP,\langle,\rangle_\varphi)$ and $\mathrm{ESp}_\varphi(R,I)$ are equal. 
\qed
\end{proof}
	
\begin{remark}
	In view of Lemma \ref{lemma:V=ESp}, we can consider $V(P,\langle,\rangle)$ and $V(P,IP,\langle,\rangle)$ as generalizations of the elementary symplectic groups ($\mathrm{ESp}_\varphi(R)$ and $\mathrm{ESp}_\varphi(R,I)$ respectively) in the free case.
\end{remark}

%***********************************************************************************************************************************************************************

\section{Normality of $V(P,IP,\langle,\rangle)$}
\label{section:6}
	
In this section, we prove the main result of this paper, namely normality of $V(P,IP,\langle,\rangle)$ in $\mathrm{Sp}(P,\langle,\rangle)$ (see Lemma \ref{lemma:normality}).
	
\begin{lemma}
\label{lemma:existence}
	Let $(P,\langle,\rangle)$ be a symplectic $R$-module with $P$ finitely generated $R$-module of rank $2n$. Suppose $\alpha$ is an isometry of $P$. Then there exists $\alpha(X)\in\mathrm{Sp}(P[X],\langle,\rangle_{\otimes R[X]})$ such that $\alpha(1)=\alpha$.
\end{lemma}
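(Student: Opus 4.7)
The plan is to define $\alpha(X)$ as the trivial ($X$-independent) $R[X]$-linear extension of $\alpha$, arising from base change along the inclusion $R \hookrightarrow R[X]$. Concretely, set
\begin{equation*}
	\alpha(X)\Bigl(\sum_i p_i X^i\Bigr) := \sum_i \alpha(p_i) X^i
\end{equation*}
for every $\sum_i p_i X^i \in P[X]$. Well-definedness as an $R[X]$-module homomorphism is immediate from the $R$-linearity of $\alpha$, and since $\alpha$ is an automorphism of $P$, applying the same construction to $\alpha^{-1}$ produces a two-sided inverse, making $\alpha(X)$ an $R[X]$-automorphism of $P[X]$.

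Next I would verify that $\alpha(X)$ preserves the form $\langle,\rangle_{\otimes R[X]}$ defined in Lemma \ref{lemma:P[X] is symplectic}. For $p(X) = \sum_i p_i X^i$ and $q(X) = \sum_j q_j X^j$ in $P[X]$, expanding the definition and using that $\alpha$ is an isometry of $(P,\langle,\rangle)$ gives
\begin{equation*}
	\langle \alpha(X)(p(X)), \alpha(X)(q(X)) \rangle_{\otimes R[X]} = \sum_{i,j} \langle \alpha(p_i), \alpha(q_j) \rangle X^{i+j} = \sum_{i,j} \langle p_i, q_j \rangle X^{i+j} = \langle p(X), q(X) \rangle_{\otimes R[X]}.
\end{equation*}
Hence $\alpha(X) \in \mathrm{Sp}(P[X],\langle,\rangle_{\otimes R[X]})$.

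Finally, I would confirm that $\alpha(1) = \alpha$ in the sense of the notation preceding Definition \ref{V(P) defn}. For $p \in P$ regarded as a constant element of $P[X]$, we have $\alpha(X)|_P(p) = \alpha(p) \in P \subseteq P[X]$, so $\alpha(1)(p) = f_1(\alpha(p)) = \alpha(p)$. Therefore $\alpha(1) = \alpha$, as required. There is no serious obstacle in this argument: the construction is just the functoriality of base change $R \to R[X]$ applied to the symplectic category, and its compatibility with $\langle,\rangle_{\otimes R[X]}$ is built into the very definition of that form.
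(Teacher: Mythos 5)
Your construction is exactly the one used in the paper: extend $\alpha$ coefficientwise to $P[X]$, check it is an $R[X]$-automorphism (the paper verifies injectivity and surjectivity directly, you invoke the extension of $\alpha^{-1}$ as a two-sided inverse, which amounts to the same thing), and verify the isometry property by the same expansion of $\langle,\rangle_{\otimes R[X]}$. The argument is correct and essentially identical to the paper's proof.
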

\begin{proof}
	Define $\alpha(X):P[X]\rightarrow P[X]$ as $\alpha(X)(\sum p_iX^i):=\sum \alpha(p_i)X^i$. Then, we have $\alpha(1)=\alpha$ and $\alpha(X)$ is an $R[X]$-module homomorphism as $\alpha$ is an $R$-module homomorphism. Also, if $\alpha(X)(\sum p_iX^i)=0$, then $\alpha(p_i)=0$ for all $i$, which means that $p_i=0$ for all $i$ (as $\alpha$ is injective). Hence $\alpha(X)$ is injective. The homomorphism $\alpha(X)$ is also surjective as $\alpha$ is surjective. For $p(X)=\sum p_iX^i$, $q(X)=\sum q_jX^j\in P[X]$, we have 
	\begin{eqnarray*}
		\langle\alpha(X)(p(X)),\alpha(X)(q(X))\rangle_{\otimes R[X]}
		&=&\bigg\langle\sum\alpha(p_i)X^i,\sum\alpha(q_j)X^j\bigg\rangle_{\otimes R[X]}\\
		&=&\sum_{i,j}\langle\alpha(p_i),\alpha(q_j)\rangle X^{i+j}\\
		&=&\sum_{i,j}\langle p_i,q_j\rangle X^{i+j} \hfill \text{ [as $\alpha$ is an isometry of $P$]}\\
		&=&\langle p(X),q(X)\rangle_{\otimes R[X]}.
	\end{eqnarray*}
	Therefore $\alpha(X)$ is an isometry of $P[X]$.   
\qed
\end{proof}
	
\begin{lemma}
\label{lemma:normality}
	Let $(P,\langle,\rangle)$ be a symplectic $R$-module with $P$ finitely generated $R$-module of rank $2n$. Then, $V(P,IP,\langle,\rangle)$ is a normal subgroup of the group of isometries of $P$.
\end{lemma}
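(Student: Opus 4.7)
The plan is to verify normality pointwise on $\mathrm{Spec}(R)$ by reducing to the absolute normality theorem for the elementary symplectic group (Theorem \ref{lemma:norm of esp}), which is available at each stalk because $P$ becomes free there. Let $\delta \in \mathrm{Sp}(P,\langle,\rangle)$ and $\gamma \in V(P,IP,\langle,\rangle)$. By definition, there is an isometry $\alpha(X)$ of $P[X]$ with $\alpha(0)=\mathrm{Id}$, $\alpha(1)=\gamma$ and $\alpha(X)_\ideal{p}\in \mathrm{ESp}_{\varphi\otimes R_\ideal{p}[X]}(R_\ideal{p}[X],I_\ideal{p}[X])$ for every prime $\ideal{p}$. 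Using Lemma \ref{lemma:existence}, extend $\delta$ to an isometry $\widetilde\delta$ of $P[X]$ by coefficient-wise action; this $\widetilde\delta$ is constant in $X$ and satisfies $\widetilde\delta(a)=\delta$ for every $a\in R$.

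Now define $\eta(X) := \widetilde\delta\, \alpha(X)\, \widetilde\delta^{\,-1}$. This is clearly an isometry of $P[X]$ (composition of three isometries), and $\eta(0)=\widetilde\delta\,\mathrm{Id}\,\widetilde\delta^{\,-1}=\mathrm{Id}$, $\eta(1)=\delta\gamma\delta^{-1}$. The remaining point is to verify the local condition $\eta(X)_\ideal{p}\in \mathrm{ESp}_{\varphi\otimes R_\ideal{p}[X]}(R_\ideal{p}[X],I_\ideal{p}[X])$ for every $\ideal{p}\in\mathrm{Spec}(R)$. Since $P$ is finitely generated projective and $R_\ideal{p}$ is local, $P_\ideal{p}$ is free of rank $2n$; fix a basis so that the form is represented by an alternating matrix $\varphi$ of Pfaffian $1$ (as in Definition \ref{V(P,IP) defn}). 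Then $\delta_\ideal{p}\in\mathrm{Sp}_\varphi(R_\ideal{p})$ by Lemma \ref{lemma:iso gp=sym gp}, and $\widetilde\delta_\ideal{p}$ is the same matrix viewed constantly in $\mathrm{Sp}_{\varphi\otimes R_\ideal{p}[X]}(R_\ideal{p}[X])$.

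Consequently $\eta(X)_\ideal{p}$ is the conjugate of $\alpha(X)_\ideal{p}\in \mathrm{ESp}_{\varphi\otimes R_\ideal{p}[X]}(R_\ideal{p}[X],I_\ideal{p}[X])$ by an element of $\mathrm{Sp}_{\varphi\otimes R_\ideal{p}[X]}(R_\ideal{p}[X])$. The hypothesis $R=2R$ (carried over from the paper's running assumption) transfers to $R_\ideal{p}[X]$, and the Pfaffian of $\varphi$ over $R_\ideal{p}[X]$ is still $1$, so Theorem \ref{lemma:norm of esp} applies to the ring $R_\ideal{p}[X]$ with the ideal $I_\ideal{p}[X]$ and yields that $\mathrm{ESp}_{\varphi\otimes R_\ideal{p}[X]}(R_\ideal{p}[X],I_\ideal{p}[X])$ is a normal subgroup of $\mathrm{Sp}_{\varphi\otimes R_\ideal{p}[X]}(R_\ideal{p}[X])$. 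Hence $\eta(X)_\ideal{p}$ lies in the relative elementary symplectic group, completing the verification of all defining conditions, so $\delta\gamma\delta^{-1}=\eta(1)\in V(P,IP,\langle,\rangle)$.

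I expect no real obstacle: the whole argument is a clean local-to-global package in which the constant-in-$X$ lift $\widetilde\delta$ preserves both the normalization $\eta(0)=\mathrm{Id}$ and the isometry property, while the genuine work, i.e.\ conjugation-closure of the relative elementary symplectic group over a Pfaffian-one alternating matrix, is precisely what Theorem \ref{lemma:norm of esp} provides at each stalk. The only thing that needs a brief verification is that the Pfaffian-one hypothesis and the invertibility of $2$ transfer to $R_\ideal{p}[X]$, both of which are immediate from the setup.
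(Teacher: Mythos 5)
Your proposal is correct and follows essentially the same route as the paper's own proof: lift $\delta$ to a constant-in-$X$ isometry of $P[X]$ via Lemma \ref{lemma:existence}, conjugate $\alpha(X)$ by it, and verify the local condition at each prime by combining Lemma \ref{lemma:iso gp=sym gp} with the normality theorem (Theorem \ref{lemma:norm of esp}) applied over $R_\ideal{p}[X]$ with the ideal $I_\ideal{p}[X]$. The only cosmetic difference is the direction of conjugation ($\widetilde\delta\,\alpha(X)\,\widetilde\delta^{-1}$ versus $\delta(X)^{-1}\alpha(X)\delta(X)$), which is immaterial.
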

\begin{proof}
	Let $\gamma\in V(P,IP,\langle,\rangle)$. Then, by definition, there exists an isometry $\alpha(X)$ of $P[X]$ with $\alpha(X)_\ideal{p} \in \mathrm{ESp}_{\varphi\otimes R_\ideal{p}[X]}(R_\ideal{p}[X],I_\ideal{p}[X])$ for all $\ideal{p}\in Spec(R)$ such that $\alpha(0)=Id$ and $\alpha(1) = \gamma$. Suppose $\delta\in\mathrm{Sp}(P,\langle,\rangle)$. Then, by Lemma \ref{lemma:existence}, there exists $\delta(X)\in\mathrm{Sp}(P[X], \langle, \rangle_{\otimes R[X]})$ such that $\delta(1)=\delta$. Define $\beta(X)=\delta(X)^{-1}\alpha(X)\delta(X)$. Then $\beta(X)\in \mathrm{Sp}(P[X],\langle,\rangle_{\otimes R[X]})$ with $\beta(0)=Id$ and $\beta(1)=\delta^{-1}\gamma\delta$.
		
	Let $\ideal{p}$ be a prime ideal of $R$. Then $P_\ideal{p}[X]$ is $R_\ideal{p}[X]$ free of rank $2n$. Suppose $\langle,\rangle_{\otimes R[X]}$ corresponds to an invertible alternating matrix $\varphi$ of size $2n$ over $R[X]$. Then, by Lemma \ref{lemma:iso gp=sym gp} we have 
	\begin{equation}
	\label{eqn:iso gp=sym gp}
		\mathrm{Sp}(P_\ideal{p}[X],\langle,\rangle_{\otimes R[X]})= \mathrm{Sp}_{\varphi\otimes R_\ideal{p}[X]}(R_\ideal{p}[X]).
	\end{equation} 
	Now we have $\alpha(X)_\ideal{p}\in \mathrm{ESp}_{\varphi\otimes R_\ideal{p}[X]}(R_\ideal{p}[X],I_\ideal{p}[X])$ by choice of $\alpha(X)$ and $\delta(X)_\ideal{p}\in\mathrm{Sp}_{\varphi\otimes R_\ideal{p}[X]}(R_\ideal{p}[X])$ by equation \ref{eqn:iso gp=sym gp}. By Lemma \ref{lemma:norm of esp}, we have
	\begin{equation*}
		\beta(X)_\ideal{p}=(\delta(X)_\ideal{p})^{-1}(\alpha(X))_\ideal{p} (\delta(X))_\ideal{p} \in \mathrm{ESp}_{\varphi\otimes R_\ideal{p}[X]}(R_\ideal{p}[X],I_\ideal{p}[X]). 
	\end{equation*}
	Then, by definition $\beta(1)\in V(P,IP,\langle,\rangle)$, that is, $\delta^{-1}\gamma\delta\in V(P,IP,\langle,\rangle)$. Therefore, the group $V(P,IP,\langle,\rangle)$ is a normal subgroup of $\mathrm{Sp}(P,\langle,\rangle)$.  
\qed
\end{proof}

%***********************************************************************************************************************************************************************

\newpage

\end{document}